\newtheorem{assumption}{Assumption}
\newcommand{\mbfr}{\mathbf{r}}
\newcommand{\mbfm}{\mathbf{m}}
\newcommand{\mbfn}{\mathbf{n}}
\definecolor{brightpink}{rgb}{1.0, 0.0, 0.5}
\setlist[enumerate]{leftmargin=.5in}
\setlist[itemize]{leftmargin=.5in}
\crefname{hypothesis}{Hypothesis}{Hypotheses}
\def\UrlSpecials{\do\~{\kern -.15em\lower .7ex\hbox{~}\kern .04em}} \catcode`~=13 
\newcommand{\iprod}[2]{\left\langle {#1}, {#2} \right\rangle}
\newcommand{\lrbrace}[1]{\left\{#1\right\}}
\newcommand{\dom}{\mathbf{dom}}
\newcommand{\inter}{\mathbf{int}\,}
\newcommand{\bbE}{\mathbb{E}}
\newcommand{\bbR}{\mathbb{R}}
\DeclareMathAlphabet{\mathbsf}{OT1}{cmss}{bx}{n}
\DeclareMathAlphabet{\mathssf}{OT1}{cmss}{m}{sl}
\DeclareSymbolFont{bsfletters}{OT1}{cmss}{bx}{n}  
\DeclareSymbolFont{ssfletters}{OT1}{cmss}{m}{n}
\DeclareMathSymbol{\bsfGamma}{0}{bsfletters}{'000}
\DeclareMathSymbol{\ssfGamma}{0}{ssfletters}{'000}
\DeclareMathSymbol{\bsfDelta}{0}{bsfletters}{'001}
\DeclareMathSymbol{\ssfDelta}{0}{ssfletters}{'001}
\DeclareMathSymbol{\bsfTheta}{0}{bsfletters}{'002}
\DeclareMathSymbol{\ssfTheta}{0}{ssfletters}{'002}
\DeclareMathSymbol{\bsfLambda}{0}{bsfletters}{'003}
\DeclareMathSymbol{\ssfLambda}{0}{ssfletters}{'003}
\DeclareMathSymbol{\bsfXi}{0}{bsfletters}{'004}
\DeclareMathSymbol{\ssfXi}{0}{ssfletters}{'004}
\DeclareMathSymbol{\bsfPi}{0}{bsfletters}{'005}
\DeclareMathSymbol{\ssfPi}{0}{ssfletters}{'005}
\DeclareMathSymbol{\bsfSigma}{0}{bsfletters}{'006}
\DeclareMathSymbol{\ssfSigma}{0}{ssfletters}{'006}
\DeclareMathSymbol{\bsfUpsilon}{0}{bsfletters}{'007}
\DeclareMathSymbol{\ssfUpsilon}{0}{ssfletters}{'007}
\DeclareMathSymbol{\bsfPhi}{0}{bsfletters}{'010}
\DeclareMathSymbol{\ssfPhi}{0}{ssfletters}{'010}
\DeclareMathSymbol{\bsfPsi}{0}{bsfletters}{'011}
\DeclareMathSymbol{\ssfPsi}{0}{ssfletters}{'011}
\DeclareMathSymbol{\bsfOmega}{0}{bsfletters}{'012}
\DeclareMathSymbol{\ssfOmega}{0}{ssfletters}{'012}
\DeclareMathOperator*{\argmin}{argmin} 
\DeclareMathOperator*{\dist}{dist}
\DeclareMathOperator{\minimize}{minimize}
\title{Block Bregman Majorization Minimization with Extrapolation\thanks{LTK Hien and DN Phan contributed equally to this work.
		\funding{The authors acknowledge the support by the European Research Council (ERC starting grant no 679515), and by the Fonds de la Recherche Scientifique - FNRS and the Fonds Wetenschappelijk Onderzoek - Vlaanderen (FWO) under EOS Project no O005318F-RG47.}}}
\author{Le~Thi~Khanh~Hien\thanks{Department of Mathematics and Operational Research, 
		Facult\'e Polytechnique, Universit\'e de Mons, 
		Rue de Houdain 9, 7000 Mons, Belgium 
		(\email{thikhanhhien.le@umons.ac.be}, \email{nicolas.gillis@umons.ac.be}).}
  \and Duy~Nhat~Phan\thanks{Dynamic Decision Making Laboratory, Carnegie Mellon University, USA (\email{dnphan@andrew.cmu.edu}).}   
	\and Nicolas~Gillis\footnotemark[2] 
	\and Masoud~Ahookhosh\thanks{Department of Mathematics, University of Antwerp, Belgium (\email{masoud.ahookhosh@uantwerp.be}).} 
             \and 
              Panagiotis~Patrinos\thanks{Department of Electrical Engineering (ESAT-STADIUS), KU Leuven, Belgium 
               (\email{panos.patrinos@esat.kuleuven.}).}
	}
\begin{document}

\maketitle

\begin{abstract}
In this paper, we consider a class of nonsmooth nonconvex optimization problems whose objective is the sum of a block relative smooth function and a proper and lower semicontinuous block separable function. Although the analysis of block proximal gradient (BPG) methods  for the class of block $L$-smooth functions have been successfully extended to Bregman BPG methods that deal with the class of block relative smooth functions, accelerated Bregman BPG methods are  scarce and challenging to design. Taking our inspiration from Nesterov-type acceleration and the majorization-minimization scheme, we propose a block alternating Bregman Majorization-Minimization framework with Extrapolation (BMME). We prove subsequential convergence of BMME to a first-order stationary point under mild assumptions, and study its  global convergence under stronger conditions. We illustrate the effectiveness of BMME on the penalized orthogonal nonnegative matrix factorization problem. 
\end{abstract}

\begin{keywords}
inertial block coordinate method, 
majorization minimization,  
Bregman surrogate function, 
acceleration by extrapolation 
\end{keywords}

\begin{AMS}
90C26, 49M37, 65K05, 15A23, 15A83 
\end{AMS}

\section{Introduction}
\label{sec:intro}
In this paper, we consider the following nonsmooth nonconvex optimization problem
\begin{equation}
\label{model}
\begin{split}
  \displaystyle \minimize_{\substack{x  = (x_1,\ldots,x_m)}} \quad   
    & F(x) := f(x) + \sum_{i=1}^m g_i(x_i) \\
     \text{ subject to } \quad 
     &  x_i \in \mathcal X_i \text{ for } i=1,\ldots,m,
\end{split}
\end{equation}
where $\mathcal X_i$ is a closed convex set of a finite dimensional real linear space  $\mathbb E_i$ for $i \in [m]:=\{1,2,\ldots,m\}$, $x$ can be decomposed into $m$ blocks $ x=(x_1,\ldots,x_m)$ with $x_i\in \mathcal X_i$,  $f$ is a continuously differentiable function, and $g_i$ is a proper and lower semicontinuous function (possibly with extended values), and $\mathcal X_i \cap \dom\, g_i \ne \emptyset$. We denote $\mathcal X:=\prod_{i=1}^m \mathcal X_i$. We assume $F$ is bounded from below throughout the paper.

\subsection{Related works} 

The composite separable optimization  problem (CSOP)~\eqref{model} has been widely studied. It covers many applications including compressed sensing~\cite{Attouch2010}, sparse dictionary learning~\cite{aharon2006k,XuYin2016}, 
nonnegative tensor 
factorization~\cite{Xu2013,Hien_ICML2020}, 
and regularized sparse regression problems~\cite{BLUMENSATH2009,Natarajan1995}. 
When $f$ has the block Lipschitz smooth property (that is, for all $i\in [m]$ and fixing the values of  $x_j$ for $j\ne i$, the block function $x_i\mapsto f(x)$ admits an $L_i$-Lipschitz continuous gradient), then the nonconvex CSOP can be efficiently solved by  block proximal gradient (BPG) methods \cite{Beck2013,Bolte2014,Razaviyayn2013,Tseng2009}. These methods update each block $i$,  while fixing the value of 
blocks $x_j$ for $j\ne i$,  by minimizing over $x_i\in \mathcal X_i$ 
the block Lipschitz gradient surrogate function (see \cite[Section 4]{Titan2020}) as follows
\begin{equation}
\label{lipschitz-surrogate}
x_i^{k+1} \in\argmin_{x_i\in\mathcal X_i } \iprod{ \nabla f_i^k(x_i^{k})}{x_i-x_i^{k}}  + \frac{1}{2\gamma_i^{k}} \|x_i-x_i^{k}\|^2 +g_i(x_i),
\end{equation} 
where $x_i^k$ is the value of block $i$ at iteration $k$, $f_i^k(\cdot)$ denotes the value of the block function $x_i\to f(x_1^{k+1},\ldots,x_{i-1}^{k+1},x_i,x_{i+1}^k,\ldots,x_m^k)$, and $\gamma_i^k$ is a step-size. To accelerate the BPG methods, several inertial versions have been proposed, including 
\begin{itemize}
    \item[(i)] the heavy ball type acceleration methods in~\cite{Ochs2019} that calculate an extrapolation point $\bar x_i^k=x_i^k+\beta_i^k(x_i^k-x_i^{k-1})$,  then solving~\eqref{lipschitz-surrogate} by replacing the proximal term $\frac{1}{2\gamma_i^{k}} \|x_i-x_i^{k}\|^2$ by $\frac{1}{2\gamma_i^{k}} \|x_i-\bar x_i^{k}\|^2$, 
    
    \item[(ii)] the Nesterov-type acceleration methods in \cite{Xu2013,Xu2017} that takes the same step as the heavy ball acceleration but also replace $\nabla f_i^k(x_i^{k})$ in \eqref{lipschitz-surrogate} by $\nabla f_i^k(\bar x_i^{k})$, and 
    
    \item[(iii)] the acceleration methods using two extrapolation points in~\cite{Hien_ICML2020,Pock2016} that evaluate the gradient $\nabla f_i^k$ in \eqref{lipschitz-surrogate} at an extrapolation point different from $\bar x_i^k$. 
\end{itemize}
These methods were proved to have convergence guarantees when solving the nonconvex CSOP. 
 The analysis of BPG methods has been extended to Bregman BPG methods  \cite{ahookhosh2019multi,HienNicolas_KLNMF,Teboulle2020} that replace the proximal term $\frac12\|x_i-x_i^{k}\|^2$ in~\eqref{lipschitz-surrogate} by a Bregman divergence $D_{\psi_i}(x_i,x_i^k)$ associated with a kernel  function $\psi_i$ (see Definition~\ref{def:Bregman_divergence}) as follows 
 \begin{equation}
\label{Bregman-surrogate}
\min_{x_i\in\mathcal X_i } \iprod{ \nabla f_i^k(x_i^{k})}{x_i-x_i^{k}}  + \frac{1}{\gamma_i^{k}} D_{\psi_i}(x_i,x_i^k) +g_i(x_i).
\end{equation}
The Bregman BPG methods can deal with a larger class of nonconvex CSOP in which the block function $x_i \mapsto f(x)$ 
may not have a $L_i$-Lipschitz continuous gradient, but is a relative smooth function  (also known as a smooth adaptable function)~\cite{Bauschke2017,lu2018relatively,Bolte2018}. 
Although the convergence analysis of BPG methods has been successfully extended to Bregman BPG methods, the convergence guarantees of their inertial versions for solving the nonconvex CSOP have not been studied much. 
In fact, to the best of our knowledge, there are only two papers addressing the convergence of inertial versions of Bregman BPG methods for solving~\eqref{model},   namely \cite{ahookhosh2020_inertial} and \cite{Titan2020}. In \cite{ahookhosh2020_inertial}, the authors consider an inertial Bregman BPG method that adds to $\nabla f_i^k(x_i^{k})$ in~\eqref{Bregman-surrogate} a \emph{weak inertial force}, $\alpha_i^k(x_i^k - x_i^{prev})$,  where $\alpha_i^k$ is some extrapolation parameter and $x_i^{prev}$ is the previous value of $x_i^k$. In~\cite[Section 4.3]{Titan2020}, the authors introduce a heavy ball type acceleration with backtracking. The analysis of this method can be extended to a Nesterov-type acceleration with backtracking; however, the back-tracking procedure in~\cite[Section 4.3]{Titan2020} for the Nesterov-type acceleration would be quite expensive since the computation of $f_i^k(\bar x_i^{k})$ and $\nabla  f_i^k(\bar x_i^{k})$ would be required in the back-tracking process. 
Furthermore, there are no experiments in \cite{ahookhosh2020_inertial} and \cite{Titan2020} to justify the efficacy of the inertial versions for Bregman BPG methods. 

BPG and Bregman BPG methods belong to the block majorization minimization framework \cite{Titan2020,Razaviyayn2013} that updates one block $x_i$ of $x$ by minimizing
a block surrogate function of the objective function. 
In~\cite[Section 6.2]{Titan2020}, 
the matrix completion problem (MCP), which also has the form of Problem~\eqref{model}, illustrates the advantage of using suitable block surrogate functions and the efficacy of TITAN, the inertial block majorization minimization framework proposed in~\cite{Titan2020}. 
Specifically, each subproblem that minimizes the block surrogate function used in TITAN has a closed-form solution while each proximal gradient step in the BPG method does not. 
Furthermore, TITAN outperforms BPG for the MCP. 
This motivates us to design an algorithm that allows using surrogate functions of $g_i$ to replace $g_i$, in contrast to the current Bregman BPG methods which do not change $g_i$ in the sub-problems; see \eqref{Bregman-surrogate}.

\subsection{Contribution and organization of the paper} 

After having introduced some preliminary notions of the Bregman distances and block relative smooth functions in Section~\ref{sec:Bregman}, 
 we propose in Section~\ref{sec:multi_iMM} a block alternating \textbf{B}regman \textbf{M}ajorization \textbf{M}inimization framework with \textbf{E}xtrapolation (BMME) that uses Nesterov-type acceleration to solve Problem~\eqref{model} in which $f$ is assumed to be a block relative smooth function with respect to $(\varphi_1,\ldots,\varphi_m)$; see Definition~\ref{def:block_relative_smooth}. 
 This means that the gradient and the Bregman divergence  in~\eqref{Bregman-surrogate} are replaced with $\nabla f_i^k(\bar x_i^k)$ and $D_{\varphi^k_i}(x_i,\bar x_i^k)$, respectively; see Algorithm~\ref{algo:iMM_multiblock}.  We use a line-search strategy proposed in~\cite{Mukkamala2020} to determine the extrapolation point $\bar x_i^k$. We remark that the inertial Bregman BPG method proposed in~\cite{Mukkamala2020}, named CoCaIn, is for solving the CSOP with $m=1$ while BMME is for solving~\eqref{model} with multiple blocks. Furthermore, CoCaIn requires its subproblem, which is Problem~\eqref{Bregman-surrogate} with the gradient and the Bregman divergence being replaced by $\nabla f_i^k(\bar x_i^k)$ and $D_{\varphi^k_i}(x_i,\bar x_i^k)$ (note that we can omit the index $i$ as $m=1$ for CoCaIn), to be solved exactly (in other words, to have a closed-form solution). This requirement would be restrictive in applications where the nonsmooth part $g_i$ is nonconvex and does not allow a closed form solution for the subproblem. In contrast, BMME employs surrogate functions for $g_i$, $i \in [m]$, that may lead to closed-form solutions for its subproblem, see an example in \Cref{appendix_mcp}.  We note that CoCaIn requires $g_i(\cdot) + \alpha/2 \|\cdot\|^2$ to be convex for some constant $\alpha\geq 0$ (see \cite[Assumption C]{Mukkamala2020}) while BMME requires $x_i\mapsto u_i(x_i,y_i)$ to be convex for any $y_i\in \mathcal X_i$, where $u_i(\cdot,\cdot)$ is a surrogate function of $g_i$ (see Definition~\ref{def:surrogate}). And as such, our analysis may allow a larger class of $g_i$ than CoCaIn since $u_i$ with $u_i(x_i,y_i)= g_i(x_i) + \alpha/2 \|x_i-y_i\|^2$ is a surrogate function of $g_i$. It is important noting that the convexity assumption for the surrogate of $g_i$ allows BMME to use stepsizes that only depend on the relative smooth constants of $f$. In contrast, CoCaIn needs to start with an initial relative smooth constant that linearly depends on the value of $\alpha$ that makes $g_i(\cdot) + \alpha/2 \|\cdot\|^2$ convex. This initial relative smooth constant could be very large and lead to a very small stepsizes which results in a slow convergence. To illustrate this fact, we provide an experiment in \Cref{appendix_mcp} to compare the performance of BMME and CoCaIn on the matrix completion problem. 

In Section~\ref{sec:convergence}, we prove subsequential convergence of the sequence generated by BMME to a first-order stationary point of~\eqref{model} under mild assumptions, 
and prove the global convergence under stronger conditions. Furthermore, the analysis in~\cite{Mukkamala2020} does not consider the subsequential convergence but only proves the global convergence for $F$ satisfying the  Kurdyka-{\L}ojasiewicz (KL) property \cite{Kurdyka1998},  and under the assumption that the domains of the kernel functions are the full space. 
In our convergence analysis, we assume that every limit point $x^*$ of the generated sequence by BMME satisfying the condition that $x_i^*$ lies in the interior of the domain of $x_i\mapsto \varphi_i(x_1^*,\ldots,x_{i-1}^*,x_i,x_{i+1}^*,x_m^*)$ for $i \in [m]$. This assumption is naturally satisfied when the $\varphi_i$'s have a full domain or $\mathcal X\subset {\rm int \, dom\,} \varphi_i$. For example, the feasible set $\mathcal X=\{x: x_i\in\mathbb R^{d_i}, x_i\geq \varepsilon>0\}$ (that is, each component of $x_i$ is lower bounded by a positive constant $\varepsilon$) and the Burg entropy $\varphi_i(x)=-\sum_{j=1}^{d_i}\log x_{ij}$ satisfy our assumption; see for example the perturbed Kullback-Leibler nonnegative matrix factorization in~\cite{HienNicolas_KLNMF}. We then prove subsequential convergence without the assumption that $F$ satisfies the KL property, and prove global convergence with this assumption.  

In Section~\ref{sec:experiment}, we apply BMME to solve a penalized orthogonal nonnegative matrix factorization problem (ONMF). 
We conclude the paper in Section~\ref{sec:conclusion}.

\section{Preliminaries: Bregman distances and relative smoothness} 
\label{sec:Bregman}

In this section, we present preliminaries of Bregman distances and relative smoothness. 
We adopt~\cite[Definition 2.1]{Bolte2018} to define a kernel generating distance which, for simplicity, we refer to as  ``kernel function''. 


\begin{definition}[Kernel generating distance] Let $C$ be a nonempty, convex and open subset of $\mathbb E_i$. A function $\psi:\mathbb E_i \to \bar{\mathbb R}:=(-\infty, +\infty]$ associated with $C$ is called a kernel generating distance if it satisfies the following:
  \begin{description}
\item[(i)] $\psi$ is proper, lower semicontinuous and convex with ${\rm dom\,} \psi \subset \bar C$, where $\bar C$ is the closure of $C$, and ${\rm dom\,} \partial \psi = C$.
\item[(ii)] $\psi$ is continuously differentiable on ${\rm int\, dom\,} \psi \equiv C$.
\end{description}
Let us denote the class of kernel generating distances by $\mathcal G(C)$.
\end{definition}

\begin{definition}
\label{def:Bregman_divergence}
Given $\psi\in \mathcal G(C)$, we define $D_\psi: {\rm dom\,} \psi \times {\rm int\, dom\,} \psi \to \mathbb R_+$ as the \emph{Bregman divergence} associated with the kernel function $\psi$ as follows	
\begin{align*}
D_\psi(x_i,y_i):= \psi(x_i) - \psi(y_i) -\iprod{\nabla \psi(y_i)}{x_i-y_i}. 
\end{align*}
\end{definition}

\begin{definition}[$(L,l)$-relative smooth function]  
\label{def:relative_smooth}
Given $\psi\in \mathcal G(C)$, let $\phi: \mathbb E_i \to (-\infty, +\infty] $ be a proper and lower semicontinuous function with ${\rm dom\,} \psi \subset {\rm dom\,}\phi$, which is continuously differentiable on $C={\rm int\, dom\,} \psi $. 
 We say $\phi$ is 
$(L,l)$-relative smooth to $\psi$ if there exist $L > 0$ and $l\geq 0$ such that for any $x_i, y_i\in C$, 
\begin{equation}
\phi(x_i) - \phi(y_i) -\iprod{\nabla \phi(y_i)}{x_i-y_i} \leq LD_\psi(x_i, y_i), 
\end{equation}
and 
\begin{equation}
\label{eq:lD}
 -l D_\psi(x_i, y_i) 
 \leq \phi(x_i) 
 - \phi(y_i) -\iprod{\nabla \phi(y_i)}{x_i-y_i}.
\end{equation}
\end{definition}
Whenever $\phi$ is convex, we may take $l=0$ and Definition~\ref{def:relative_smooth} recovers~\cite[Definition 1.1]{lu2018relatively}. 
In the case $l=L$,  Definition~\ref{def:relative_smooth} recovers~\cite[Definition 2.2]{Bolte2018}.

 Given a function $f: \mathbb E \to (-\infty, +\infty]$, for each $i\in [m]$ and any fixed $y_j$ for $j\ne i$, we define a block function $f(\cdot,y_{\ne i}):\mathbb E_i \to (-\infty, +\infty]$ by 
\begin{equation}
   x_i\mapsto f(x_i,y_{\ne i}):= f(y_1,\ldots,y_{i-1},x_i,y_{i+1},\ldots,y_m).
\end{equation}

\begin{definition}[Block relative smooth function]  
\label{def:block_relative_smooth}
We say that 
$f:  \mathbb E \to (-\infty, +\infty]$ is a block relative smooth function with respect to $(\varphi_1,\ldots,\varphi_m)$, where $f$ is continuously differentiable on $\mathcal C={\rm int\, dom\,} \varphi_1=\cdots= {\rm int\, dom\,} \varphi_m$ and ${\rm dom\,} \varphi_1 = \ldots = {\rm dom\,} \varphi_m \subset {\rm dom\,} f$, if, for any $y \in {\rm dom\,} \varphi_i$ we have 
$\varphi_i(\cdot,y_{\ne i})$ is a kernel generating function 
and the function $f(\cdot,y_{\ne i})$
is a $(L^{y_{\ne i}}_i,l^{y_{\ne i}}_i)$-relative smooth to $\varphi_i(\cdot,y_{\ne i})$, where $(L^{y_{\ne i}}_i,l^{y_{\ne i}}_i)$  may depend on $y_j$, $j\ne i$.
\end{definition}

Throughout this paper we will assume the following. 
\begin{assumption}\label{assumption:f}
We suppose $\mathcal C= \inter\, \dom\, \varphi_1=\cdots= \inter\, \dom\, \varphi_m$, $\dom\, \varphi_1 = \ldots = \dom\, \varphi_m \subset \dom\, f$, $\mathcal X \cap \dom\, \varphi_1 \ne\emptyset$,  the function $f$ in~\eqref{model} is a block relative smooth function with respect to $(\varphi_1,...,\varphi_m)$.
\end{assumption}

Let us make an important remark regarding   Definition~\ref{def:block_relative_smooth}. 


 \paragraph{Flexibility of Definition~\ref{def:block_relative_smooth}.} \label{paragraph:flex}
 Let us consider the notion of  block relative smoothness in   Definition~\ref{def:block_relative_smooth} without $l^{y_{\ne i}}_i$, that is, the condition~\eqref{eq:lD} is discarded. Similar definitions have been considered in~\cite{ahookhosh2019multi} and~\cite{ahookhosh2020_inertial}. 
In~\cite{ahookhosh2019multi}, the authors first define a multi-block kernel function $\psi: \mathbb E_1 \times \ldots \mathbb \times E_m \to \bar{\mathbb R}$ \cite[Definition 3.1]{ahookhosh2019multi}, and then define multi-block relative smoothness of $f$ with respect to this multi-block kernel function with the relative smooth constants  $(L_1,\ldots,L_m)$ \cite[Definition 3.4]{ahookhosh2019multi}.  In~\cite{ahookhosh2020_inertial}, the authors define the block relative smoothness of $f$ with respect to $(\psi_1,\ldots,\psi_m)$, where $\psi_i: \mathbb E_1 \times \ldots \mathbb E_m \to \bar{\mathbb R}$ is an $i$-th block kernel function \cite[Definition 2.1]{ahookhosh2020_inertial}, with the relative smooth constants $(L_1,\ldots,L_m)$ \cite[Definition 2.2]{ahookhosh2020_inertial}. 
It is crucial to note that $L_1,\ldots,L_m$ in these definitions are \emph{constants} and the stepsize used in the algorithms proposed in~\cite{ahookhosh2019multi} and~\cite{ahookhosh2020_inertial} to update each block $i$ is \emph{strictly less} than $1/L_i$. 
In contrast, our Definition~\ref{def:block_relative_smooth} allows the block $i$ relative smooth constant to change in the iterative process, that is, $L_i^{y_{\ne i}}$ and $l_i^{y_{\ne i}}$ are not constants but vary with respect to the values of the other blocks $y_j$ for $j\ne i$. 
This flexibility in Definition~\ref{def:block_relative_smooth} will lead to more flexible choices for the block kernel functions, and also leads to variable step-sizes in designing Bregman BPG algorithms for solving the multi-block CSOP. In fact, as we will see in Algorithm~\ref{algo:iMM_multiblock}, the stepsize to update block $i$ is $1/L_i^k$ which changes in the course of the iterative process. 
We will illustrate this crucial advantage of Algorithm~\ref{algo:iMM_multiblock} for solving the penalized ONMF problem in Section~\ref{sec:experiment}. Furthermore, it is important noting that if $f$ satisfies \cite[Definition 2.1]{ahookhosh2020_inertial} or \cite[Definition 3.4]{ahookhosh2019multi}, then $f$ satisfies  Definition~\ref{def:block_relative_smooth} with the corresponding $L_i^{y_{\ne i}}$ being the constant $L_i$ for all $i \in [m]$. 
However, the converse does not hold; see an example in Section~\ref{kernelONMF}. Hence Algorithm~\ref{algo:iMM_multiblock} applies to a broader class of problems, while allowing a more flexible choice of the step-sizes which will lead to faster convergence; see Section~\ref{exp:synthreal}.

\section{Block Alternating Majorization Minimization with Extrapolation}
\label{sec:multi_iMM}

Before introducing BMME, let us first recall the definition of a surrogate function as follows.
\begin{definition}
\label{def:surrogate}
A function $u_i:\mathcal X_i \times \mathcal X_i \to \bar{\mathbb{ R}}  $ is called a surrogate function of $g_i:\mathcal X_i\to \bar{\mathbb {R}}$ 
if the following conditions are satisfied:
\begin{description}
    \item[(a)] $u_i(y_i,y_i) = g_i(y_i)$ for all $y_i\in \mathcal X_i$, 
    \item[(b)] $u_i(x_i,y_i) \geq g_i(x_i)$ for all $x_i,y_i\in\mathcal X_i$.
    \end{description}
The approximation error is defined as $h_i(x_i,y_i):=u_i(x_i,y_i) - g_i(x_i)$.
\end{definition}
For example, $u_i(x_i,y_i) = g_i(x_i) + \frac{\alpha}{2} \|x_i-y_i\|^2$, where $\alpha$ is a nonnegative constant, is always a surrogate function of $g_i$. In this case, $h_i(x_i,y_i)=\frac{\alpha}{2} \|x_i-y_i\|^2$. We refer the readers to \cite{Titan2020,Razaviyayn2013,Mairal_ICML13} for more examples.

Denote $x^{k,0} = x^k$ and 
$$x^{k,i}=(x^{k+1}_1, \ldots,x^{k+1}_{i},x^{k}_{i+1},\ldots,x^{k}_m).
$$
For notation succinctness, we denote $\varphi_i^k(\cdot) := \varphi_i(\cdot,x^{k,i-1}_{\neq i})$,  $L_i^k: = L_i^{x^{k,i-1}_{\neq i}}$, and $l_i^k: = l_i^{x^{k,i-1}_{\neq i}}$.

We can now introduce our BMME algorithm; see Algorithm \ref{algo:iMM_multiblock}. In particular, at iteration $k$, for each block $i$, BMME chooses a surrogate function $u_i$ of $g_i$ such that $x_i\mapsto u_i(x_i,y_i)$ is convex (as mentioned in the introduction, this condition is satisfied by the requirement that $g_i(\cdot) + \alpha/2 \|\cdot\|^2$ is convex for some constant $\alpha\geq 0$ of \cite{Mukkamala2020}) and computes an extrapolated point $\bar{x}^k_i = x_i^k + \beta^k_i(x^k_i - x^{k-1}_i) \in \inter\,\dom\, \varphi_i^k$, where $\beta^k_i$ is an extrapolation parameter satisfying
\begin{equation*}
     D_{\varphi^k_i}(x^k_i,\bar{x}^k_i) \leq \frac{\delta_i L^{k-1}_i}{L^k_i + l^k_i} D_{\varphi^{k-1}_i}(x^{k-1}_i,x^k_i),
\end{equation*}
for some $\delta_i\in(0,1)$. BMME then updates $x^{k,i}$ by
   \begin{equation*}
       \begin{split}
           x^{k,i}_i&\in\argmin_{x_i\in\mathcal X_i}\biggl\{L^k_iD_{\varphi^k_i}(x_i,\bar{x}^k_i) + \biggl\langle \nabla_i f(\bar{x}^k_i,x^{k,i-1}_{\neq i}),x_i\biggr\rangle + u_i(x_i,x_i^k) \biggr\}\\
           &=\argmin_{x_i\in\mathbb E_i }\biggl\{L^k_iD_{\varphi^k_i}(x_i,\bar{x}^k_i) + \biggl\langle \nabla_i f(\bar{x}^k_i,x^{k,i-1}_{\neq i}),x_i\biggr\rangle + \big(u_i(x_i,x_i^k) + I_{\mathcal X_i} (x_i)\big)\biggr\},
       \end{split}
   \end{equation*}
where $I_{\mathcal X_i}$ is the indicator function of $\mathcal X_i$.
We make the following standard assumption for $\{x^{k}\}$, see for example~\cite[Assumption C]{Bolte2018}. Note that the initial points $x^{-1}$ and $x^0$ are chosen in the interior domain of $\varphi_i$, $i\in [m]$. 
 \begin{assumption}
 \label{assump:interior}
We have $x^{k}\in {\rm int \, dom\,} \varphi_i$, $i\in [m]$.
 \end{assumption}
Assumption~\ref{assump:interior} is naturally satisfied when the domain of $\varphi_i$ is full space.  See~\cite[Lemma 3.1]{Bolte2018} and ~\cite[Remark 3.1]{Bolte2018} for a sufficient condition that ensures~\eqref{eq:iMM_update} to produce $x_i^{k+1}\in \inter\,\dom\, \varphi_i^k$, which implies that Assumption~\ref{assump:interior} holds.

\begin{algorithm}[ht!]
\caption{BMME}
\begin{algorithmic}[1]
\label{algo:iMM_multiblock}
\STATE Choose $x^{-1},x^0\in {\rm int \, dom\,} \varphi_i$, $\delta_i\in(0,1)$, and set $k = 0$. Let $u_i$ be a surrogate function of $g_i$ such that $x_i\mapsto u_i(x_i,y_i)$ is convex for any $y_i \in \mathcal X_i$.
   \medskip
   \REPEAT 
   \FOR{ $i = 1,...,m$}
   \STATE  Compute an extrapolation parameter $\beta^k_i$ such that
   \begin{equation}\label{eq:beta}
        D_{\varphi^k_i}(x^k_i,\bar{x}^k_i) \leq \frac{\delta_i L^{k-1}_i}{L^k_i + l^k_i} D_{\varphi^{k-1}_i}(x^{k-1}_i,x^k_i),
   \end{equation}
   where $\bar{x}^k_i = x_i^k + \beta^k_i(x^k_i - x^{k-1}_i) \in {\rm int \, dom\,} \varphi_i^k$.
   \STATE Update $x^{k,i}$ by
   \begin{equation}
   \label{eq:iMM_update}
       \begin{split}
           x^{k+1}_i\in\argmin_{x_i\in\mathcal X_i}\biggl\{L^k_iD_{\varphi^k_i}(x_i,\bar{x}^k_i) + \biggl\langle \nabla_i f(\bar{x}^k_i,x^{k,i-1}_{\neq i}),x_i\biggr\rangle + u_i(x_i,x_i^k) \biggr\}.
       \end{split}
   \end{equation}
   \ENDFOR
   \STATE $k\leftarrow k+ 1$.
    \UNTIL{Stopping criterion.}
\end{algorithmic}
\end{algorithm}

\paragraph{Choice of the extrapolation parameters.}  

BMME needs to adequately choose the extrapolation parameters $\beta^k_i$'s. 
Let us mention some special choices. 

When $x_i \mapsto f(x_i,y_{\ne i})$ admits an $L^{y_{\ne i}}_i$-Lipschitz continuous gradient, that is, $\varphi(\cdot,y_{\ne i}) = \frac{1}{2}\|\cdot\|^2$, 
Condition~\eqref{eq:beta} becomes
\begin{equation*}
    (\beta^k_i)^2\|x^k_i-x^{k-1}_i\|^2 \leq \frac{\delta_i L_i^{k-1}}{L_i^k+l_i^k}\|x^k_i-x^{k-1}_i\|^2.
\end{equation*}
Therefore, we can choose any $\beta^k_i$ such that $\beta^k_i \leq \sqrt{\frac{\delta_i L_i^{k-1}}{L_i^k+l_i^k}}$.
Moreover, if $f(\cdot,y_{\ne i})$ is convex, we can take $l_i^k=0$ and hence we can choose any $\beta^k_i \leq \sqrt{\frac{\delta_i L_i^{k-1}}{L_i^k}}$.  

In general, \cite[Lemma 4.2]{Mukkamala2020} showed that if the symmetry coefficient of $\varphi_i^k$, which is defined by $\inf \Big \{\frac{D_{\varphi_i^k}(x_i,y_i)}{D_{\varphi_i^k}(y_i,x_i)}: x_i,y_i\in \inter\,\dom\,\varphi_i^k \Big\}$, is positive then, for a given 
\[
\kappa = \frac{\delta_i L^{k-1}_i}{L^k_i + l^k_i}D_{\varphi^{k-1}_i}(x^{k-1}_i,x^k_i)/D_{\varphi^k_i}(x^{k-1}_i,x^k_i)>0, 
\] 
there always exists $\gamma^k_i>0$ such that the following condition is satisfied for all $\beta^k_i\in[0,\gamma_i^k]$
\begin{equation}
    D_{\varphi^k_i}(x^k_i,\bar{x}^k_i) \leq \kappa D_{\varphi^k_i}(x^{k-1}_i,x^k_i),
\end{equation}
which is equivalent to the condition \eqref{eq:beta}. Therefore, $\beta_i^k$ can be determined by a line search as follows. At each iteration, we initialize $\beta_i^k = \frac{\nu_i^{k-1}-1}{\nu_i^k}$, where $\nu_i^k = \frac{1}{2}\left(1+\sqrt{1+4(\nu_i^{k-1})^2}\right)$ and $\nu_i^0=1$ as in Nesterov~\cite{Nesterov1983}, 
and, while the inequality~\eqref{eq:beta} does not hold, we decrease $\beta_i^k$ by a constant factor $\eta_i \in(0,1)$, 
that is,  $\beta_i^k \leftarrow \beta_i^k\eta_i$. 

Before proceeding to the convergence analysis, we make an important remark: the relative smoothness constants $L_i^k$ and $l_i^k$ in Algorithm~\ref{algo:iMM_multiblock} are assumed to be known at the moment of updating $x^k_i$. 
In case these values are unknown (or their known lower/upper bounds are too loose), we can employ the convex-concave backtracking strategy as in the algorithm CoCaIn BPG proposed in \cite[Section 3.1]{Mukkamala2020} to determine these values as well as the extrapolation parameter $\beta_i^k$. 
The upcoming convergence analysis would be similar in that case. 
In \Cref{appendix_mcp}, we consider the matrix completion problem (MCP) which has the form of Problem~\eqref{model} with $m=1$, and we illustrate BMME with the backtracking strategy on this problem when the values of the relative smooth constants are too small/large. 
The experiment presented in \Cref{appendix_mcp} on the MCP shows the  backtracking strategy significantly  improves the performance  of BMME, and also outperforms CoCaIn BPG. 
However, to simplify the presentation, we will only consider the convergence analysis of BMME for solving the multi-block Problem~\eqref{model} when the relative smooth constants are assumed to be known.  

\section{Convergence analysis}
\label{sec:convergence}
In this section, we study the subsequential convergence as well as the global convergence of BMME. For our upcoming analysis, we need the following first-order optimality condition of \eqref{model}: $x^*$ is a first-order stationary point of \eqref{model} if
\begin{equation}
\label{eq:opt_cond}
\iprod{p(x^*)}{x-x^*}\geq 0 \, \text{ for all } \, x\in \mathcal X  \text{ and for some } \, p(x^*) \in \partial F(x^*). 
\end{equation}
As $f$ is continuously differentiable folowing Assumption \ref{assumption:f}, $\partial F(x^*)=\lrbrace{\partial_{x_1} F(x^*)}\times\ldots\times\lrbrace{\partial_{x_m} F(x^*)}$, where $\partial F(x^*)$ is the limiting-subdifferential of $F$ at $x^*$, see Definition \ref{def:dd} in \Cref{sec:prelnnopt}. Therefore, 
\eqref{eq:opt_cond} is equivalent to 
\begin{equation}
\label{eq:opt_cond2}
\iprod{p_i(x^*)}{x_i-x_i^*}\geq 0 \text{ for all } x_i\in \mathcal X_i, \text{for some } p_i(x^*) \in \partial_{x_i} F(x^*) \text{ for } i\in [m]. 
\end{equation} 
 If $x^*$ is in the interior of $\mathcal X$ or $\mathcal X_i=\mathbb E_i$ then \eqref{eq:opt_cond} reduces to the condition $0\in \partial F(x^*)$, that is, $x^*$ is a critical point of $F$.

\subsection{Subsequential convergence}

The following theorem presents the subsequential convergence of the sequence generated by Algorithm \ref{algo:iMM_multiblock} under an additional assumption on the surrogate function of $g_i$.

\begin{assumption}
\label{assump:surrogate_assum} 
\begin{itemize}
\item[(A)] For $i\in [m]$, the surrogate function $u_i(\cdot,\cdot)$ of $g_i$ used in \eqref{eq:iMM_update} in Algorithm~\ref{algo:iMM_multiblock} satisfies that $x_i\mapsto u_i(x_i,y_i)$ is convex. 

\item[(B)]  
For $i\in [m]$, $u_i(x_i,y_i)$ is continuous in $y_i$ and lower semicontinuous in $x_i$.

\item[(C)] For $i\in [m]$, given $y_i\in \mathcal X_i$, there exists a function $x_i\mapsto \bar h_i(x_i,y_i)$ such that $ \bar h_i(\cdot,y_i)$ is continuously differentiable at $y_i$ and $\nabla_{x_i} \bar h_i(y_i,y_i)=0$, and the approximation error $x_i\mapsto h_i(x_i,y_i): = u_i(x_i,y_i)- g_i(x_i)$  satisfies  
\begin{equation}
\label{lemma:h_property} 
h_i(x_i,y_i) \leq \bar h_i(x_i,y_i) \;  \text{ for all } \;  x_i \in \mathcal X_i.
\end{equation}
\end{itemize}
\end{assumption}  
For example, if $g_i(\cdot) + \frac{\alpha}{2} \|\cdot\|^2$ is convex for some constant $\alpha\geq 0$ then the surrogate $u_i(x_i,y_i)=g_i(x_i) + \frac{\alpha}{2} \|x_i-y_i\|^2$ satisfies Assumption~\ref{assump:surrogate_assum} with $\bar h_i(x_i,y_i) = h_i(x_i,y_i)=\frac{\alpha}{2} \|x_i-y_i\|^2$. More examples can be found in~\cite{Titan2020}.

\begin{theorem}
\label{theorem:subsconvergence} 
Let $\{x^k\}$ be the sequence generated by Algorithm~\ref{algo:iMM_multiblock}, and let Assumptions \ref{assumption:f}-\ref{assump:surrogate_assum} be satisfied. The following statements hold.
\begin{itemize}
    \item[A)] For $k = 0,1,...$ we have \begin{equation}\label{eq:descent}
       F(x^{k,i}) \leq F(x^{k,i-1}) 
        - L^k_iD_{\varphi^k_i}(x^k_i,x_i^{k+1}) + \delta_iL^{k-1}_i D_{\varphi^{k-1}_i}(x_i^{k-1},x^k_i).
    \end{equation}
    
    \item[B)] If there exists a positive number $\underline{L}$ such that $\min_{k,i}L^k_i \geq \underline{L}$\footnote{This is a standard assumption in analysing inertial block coordinate methods, see e.g., \cite[Assumption 2]{Xu2013},  \cite[Assumption 2]{Xu2017},  \cite[Assumption 3]{Hien_ICML2020} for similar assumptions when $f$ is a block Lipschitz smooth function.}, we have \begin{equation}
        \label{eq:sumD}
        \sum_{k=0}^{+\infty}\sum_{i=1}^mD_{\varphi^k_i}(x_i^k,x^{k+1}_i) < +\infty.
    \end{equation}
    \item[C)] Assume that $\nabla_{x_i}\varphi_i(\cdot,y_{\ne i})$ for $i\in[m]$ is continuous in $y_{\ne i}$, $\{L_i^k\}$ for $i\in[m]$ and $\{x^k\}$ are bounded\footnote{It follows from Inequality~\eqref{eq:descent} that if $F$ has bounded level sets then $\{x^k\}$ is bounded.}, and $\{\rho^k_i\}$ for $i\in[m]$ is bounded from below by $\rho>0$, where $\rho^k_i$ is the modulus of the strong convexity of $\varphi_i^k$. If $x^*$ is a limit point of $\{x^k\}$ and\footnote{As mentioned in the introduction, this condition is satisfied when $\varphi_i$ has full domain or $\mathcal X \subset\inter\,\dom\, \varphi_i$} $x^*_i\in {\rm int\, dom\,} \varphi_i(\cdot,x_{\ne i}^*)$, then $x^*$ is a first-order stationary point of Problem~\eqref{model}.
\end{itemize}
\end{theorem}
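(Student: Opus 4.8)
The plan for part A) is to produce the per-block descent inequality \eqref{eq:descent} by combining the two-sided relative smoothness of $f$, the convex optimality of the block update, and the extrapolation rule \eqref{eq:beta}. First I would apply the upper relative-smoothness estimate to $f(\cdot,x^{k,i-1}_{\ne i})$ at the pair $(x_i^{k+1},\bar x_i^k)$ and the lower estimate \eqref{eq:lD} at $(x_i^k,\bar x_i^k)$; substituting the second into the first and collapsing the two gradient inner products to $\langle\nabla_i f(\bar x_i^k,x^{k,i-1}_{\ne i}),x_i^{k+1}-x_i^k\rangle$ gives $f(x^{k,i})-f(x^{k,i-1})\le \langle\nabla_i f,x_i^{k+1}-x_i^k\rangle+L_i^k D_{\varphi_i^k}(x_i^{k+1},\bar x_i^k)+l_i^k D_{\varphi_i^k}(x_i^k,\bar x_i^k)$. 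Next, because $x_i\mapsto u_i(x_i,x_i^k)$ is convex (Assumption~\ref{assump:surrogate_assum}(A)), the minimizer $x_i^{k+1}$ of \eqref{eq:iMM_update} satisfies a variational inequality; testing it at $x_i^k$ and rewriting $\langle\nabla\varphi_i^k(x_i^{k+1})-\nabla\varphi_i^k(\bar x_i^k),x_i^k-x_i^{k+1}\rangle$ through the three-point identity for Bregman divergences produces $D_{\varphi_i^k}(x_i^k,\bar x_i^k)-D_{\varphi_i^k}(x_i^k,x_i^{k+1})-D_{\varphi_i^k}(x_i^{k+1},\bar x_i^k)$, while the surrogate properties $u_i(x_i^k,x_i^k)=g_i(x_i^k)$ and $u_i(x_i^{k+1},x_i^k)\ge g_i(x_i^{k+1})$ turn this into an upper bound on $g_i(x_i^{k+1})-g_i(x_i^k)$. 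Adding the two bounds cancels the inner-product terms and the $L_i^k D_{\varphi_i^k}(x_i^{k+1},\bar x_i^k)$ terms, leaving $F(x^{k,i})-F(x^{k,i-1})\le (L_i^k+l_i^k)D_{\varphi_i^k}(x_i^k,\bar x_i^k)-L_i^k D_{\varphi_i^k}(x_i^k,x_i^{k+1})$; applying \eqref{eq:beta} to the first term yields \eqref{eq:descent}.

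For part B) I would sum \eqref{eq:descent} over $i=1,\dots,m$, so that the left-hand side telescopes to $F(x^{k+1})-F(x^k)$. Setting $A_k:=\sum_{i=1}^m L_i^k D_{\varphi_i^k}(x_i^k,x_i^{k+1})\ge 0$ and $\delta:=\max_i\delta_i<1$, the summed inequality reads $F(x^{k+1})+A_k\le F(x^k)+\delta A_{k-1}$. The natural Lyapunov sequence is then $\Theta_k:=F(x^k)+A_{k-1}$, which satisfies $\Theta_{k+1}\le\Theta_k-(1-\delta)A_{k-1}$; since $F$ is bounded below and $A_{k-1}\ge0$, $\Theta_k$ is bounded below, so telescoping gives $(1-\delta)\sum_k A_{k-1}<\infty$. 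Dividing by the uniform lower bound $\underline L$ on the $L_i^k$ yields \eqref{eq:sumD}.

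For part C), from \eqref{eq:sumD} we get $D_{\varphi_i^k}(x_i^k,x_i^{k+1})\to0$, and $\rho$-strong convexity of $\varphi_i^k$ upgrades this to $\|x_i^{k+1}-x_i^k\|\to0$; feeding $D_{\varphi_i^{k-1}}(x_i^{k-1},x_i^k)\to0$ into \eqref{eq:beta} likewise gives $D_{\varphi_i^k}(x_i^k,\bar x_i^k)\to0$ and hence $\|\bar x_i^k-x_i^k\|\to0$. Thus along a subsequence $x^{k_j}\to x^*$ we also have $x^{k_j+1}\to x^*$, $\bar x^{k_j}\to x^*$, and the partial-update points $x^{k_j,i-1}\to x^*$. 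I would then write the variational inequality characterizing the minimizer $x_i^{k+1}$ of the convex subproblem \eqref{eq:iMM_update}, namely $\langle L_i^k(\nabla\varphi_i^k(x_i^{k+1})-\nabla\varphi_i^k(\bar x_i^k))+\nabla_i f(\bar x_i^k,x^{k,i-1}_{\ne i}),x_i-x_i^{k+1}\rangle+u_i(x_i,x_i^k)-u_i(x_i^{k+1},x_i^k)\ge0$ for all $x_i\in\mathcal X_i$, and pass to the limit along $k_j$: the bracketed Bregman gradient difference tends to $0$ because $\nabla_{x_i}\varphi_i$ is continuous in $y_{\ne i}$, $\{L_i^k\}$ is bounded, and both arguments converge to $x_i^*$; the $f$-term converges by smoothness; $u_i(x_i,x_i^k)\to u_i(x_i,x_i^*)$ by continuity in the second argument; and $\liminf_j u_i(x_i^{k_j+1},x_i^{k_j})\ge\liminf_j g_i(x_i^{k_j+1})\ge g_i(x_i^*)$ by $u_i\ge g_i$ and lower semicontinuity. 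Passing to the limit supplies, for every $x_i\in\mathcal X_i$, the inequality $\langle\nabla_i f(x^*),x_i-x_i^*\rangle+u_i(x_i,x_i^*)-u_i(x_i^*,x_i^*)\ge0$, i.e. $x_i^*$ minimizes the convex map $x_i\mapsto\langle\nabla_i f(x^*),x_i\rangle+u_i(x_i,x_i^*)$ over $\mathcal X_i$.

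The convex optimality condition for this minimization gives some $s_i\in\partial_{x_i}u_i(x_i^*,x_i^*)$ with $\langle\nabla_i f(x^*)+s_i,x_i-x_i^*\rangle\ge0$ for all $x_i\in\mathcal X_i$. The decisive step, and the one I expect to be the main obstacle, is to promote $s_i$ to a limiting subgradient of $g_i$. Writing $u_i(\cdot,x_i^*)=g_i+h_i(\cdot,x_i^*)$ and using $h_i(x_i^*,x_i^*)=0$, the subgradient inequality for $u_i(\cdot,x_i^*)$ becomes $g_i(x_i)-g_i(x_i^*)-\langle s_i,x_i-x_i^*\rangle\ge-h_i(x_i,x_i^*)\ge-\bar h_i(x_i,x_i^*)$; the regularity of $\bar h_i$ in Assumption~\ref{assump:surrogate_assum}(C) makes $\bar h_i(x_i,x_i^*)=o(\|x_i-x_i^*\|)$, so the Fr\'echet-subdifferential limit is nonnegative and $s_i\in\hat\partial g_i(x_i^*)\subseteq\partial g_i(x_i^*)$. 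Consequently $p_i(x^*):=\nabla_i f(x^*)+s_i\in\partial_{x_i}F(x^*)$ satisfies \eqref{eq:opt_cond2}, proving that $x^*$ is a first-order stationary point. Beyond this subgradient transfer, the other delicate points are justifying all the limit passages simultaneously as the kernels $\varphi_i^k$ and the stepsizes $L_i^k$ vary with $k$, and invoking strong convexity uniformly (via $\rho$) to control $\|x_i^{k+1}-x_i^k\|$ and $\|\bar x_i^k-x_i^k\|$.
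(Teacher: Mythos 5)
Your proposal is correct and, for parts A) and B), coincides with the paper's own argument: the same two relative-smoothness estimates at $(x_i^{k+1},\bar x_i^k)$ and $(x_i^k,\bar x_i^k)$, the same variational inequality \eqref{eq:1123} tested at $x_i=x_i^k$ combined with the surrogate properties and the three-point identity, and the same use of \eqref{eq:beta}; your Lyapunov sequence $\Theta_k=F(x^k)+A_{k-1}$ is just a repackaging of the paper's telescoped sum and yields \eqref{eq:sumD} identically. In part C) you follow the paper up to the limiting inequality \eqref{temp1} (with one valid simplification: you only need the one-sided bound $\liminf_n u_i(x_i^{k_n+1},x_i^{k_n})\ge g_i(x_i^*)$ from $u_i\ge g_i$ and lower semicontinuity, whereas the paper first establishes the two-sided limit), but your endgame is genuinely different. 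The paper applies the lower relative-smoothness estimate with constant $l_i^*$ to replace the linear term in \eqref{temp1} by $F(x_i,x^*_{\ne i})-F(x^*)+l_i^*D_{\varphi^*_i}(x_i,x_i^*)$, so that $x_i^*$ minimizes the auxiliary problem \eqref{eq:1124}, and then invokes its first-order optimality condition together with $\nabla_{x_i}\bar h_i(x_i^*,x_i^*)=0$; you instead stay with the linearized problem, extract a convex subgradient $s_i\in\partial u_i(\cdot,x_i^*)(x_i^*)$, and promote it to $s_i\in\hat{\partial} g_i(x_i^*)\subseteq\partial g_i(x_i^*)$ via $h_i\le\bar h_i$ and $\bar h_i(x_i,x_i^*)=o(\|x_i-x_i^*\|)$. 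Your route is leaner---it never uses $l_i^*$ or a Bregman term at the limit point, and it exhibits the vector $p_i=\nabla_i f(x^*)+s_i\in\partial_{x_i}F(x^*)$ required by \eqref{eq:opt_cond2} explicitly---at the cost of tacitly splitting $\partial\bigl(u_i(\cdot,x_i^*)+I_{\mathcal X_i}\bigr)(x_i^*)$ into $\partial u_i(\cdot,x_i^*)(x_i^*)+N_{\mathcal X_i}(x_i^*)$, which needs a convex sum-rule qualification; since the paper glosses over the analogous subdifferential calculus for \eqref{eq:1124}, this is the same level of rigor, not a gap. Finally, note that both endgames silently use the tightness $\bar h_i(x_i^*,x_i^*)=0$: your $o(\|x_i-x_i^*\|)$ estimate, exactly like the paper's claim that $x_i^*$ minimizes \eqref{eq:1124}, would fail if $\bar h_i(x_i^*,x_i^*)>0$; this tightness holds in all the paper's examples and is the intended reading of Assumption~\ref{assump:surrogate_assum}(C).
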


\begin{proof}
A) Since $x^{k+1}_i$ is a solution to the convex problem \eqref{eq:iMM_update}, it follows from \cite[Theorem 3.1.23]{Nesterov2018} that for every $x_i\in\mathcal X_i$ we have
\begin{equation}\label{eq:1123}
    \begin{split}
        \biggl\langle L^k_i(\nabla\varphi^k_i(x^{k+1}_i) - \nabla\varphi^k_i(\bar{x}_i^k))
        + \nabla f(\bar{x}^k_i,x^{k,i-1}_{\neq i}), x_i- x_i^{k+1}\biggr\rangle\\ + u_i(x_i,x^k_i)
        \geq u_i(x_i^{k+1},x^k_i).
    \end{split}
\end{equation}
By choosing $x_i = x_i^{k}$, we obtain  
\begin{equation}\label{eq:1122}
    \begin{split}
        \biggl\langle L^k_i(\nabla\varphi^k_i(x^{k+1}_i) - \nabla\varphi^k_i(\bar{x}_i^k))
        + \nabla f(\bar{x}^k_i,x^{k,i-1}_{\neq i}), x_i^{k} - x_i^{k+1}\biggr\rangle\\ + u_i(x_i^{k},x^k_i)
        \geq u_i(x_i^{k+1},x^k_i).
    \end{split}
\end{equation}
Substituting $u_i(x_i^{k+1},x^k_i) \geq g_i(x^{k+1}_i)$ and $u_i(x_i^{k},x^k_i) = g_i(x^k_i)$ into this inequality gives 
\begin{equation*}
    \begin{split}
        \biggl\langle L^k_i(\nabla\varphi^k_i(x^{k+1}_i) - \nabla\varphi^k_i(\bar{x}_i^k))
        + \nabla f(\bar{x}^k_i,x^{k,i-1}_{\neq i}), x_i^{k} - x_i^{k+1}\biggr\rangle + g_i(x^k_i) 
        \geq g_i(x^{k+1}_i).
    \end{split}
\end{equation*}
On the other hand, since $f$ is a block relative smooth function, we have
\begin{equation*}
    \begin{split}
        f(x^{k,i}) \leq f(\bar{x}^k_i,x^{k,i-1}_{\neq i}) + \langle \nabla f(\bar{x}^k_i,x^{k,i-1}_{\neq i}), x_i^{k+1} - \bar{x}^k_i\rangle + L^k_iD_{\varphi^k_i}(x_i^{k+1},\bar{x}^k_i),
    \end{split}
\end{equation*}
and
\begin{equation*}
    \begin{split}
        f(\bar{x}^k_i,x^{k,i-1}_{\neq i}) + \langle \nabla f(\bar{x}^k_i,x^{k,i-1}_{\neq i}), x_i^{k} - \bar{x}^k_i\rangle \leq  f(x^{k,i-1}) + l^k_iD_{\varphi^k_i}(x_i^{k},\bar{x}^k_i),
    \end{split}
\end{equation*}
By summing the three inequalities above, we obtain
\begin{equation}\label{eq:1121}
\begin{split}
    F(x^{k,i}) \leq F(x^{k,i-1}) + L^k_i\biggl\langle \nabla\varphi^k_i(x^{k+1}_i) - \nabla\varphi^k_i(\bar{x}_i^k)
        , x_i^{k}- x_i^{k+1}\biggr\rangle\\
        + L^k_iD_{\varphi^k_i}(x_i^{k+1},\bar{x}^k_i) + l^k_iD_{\varphi^k_i}(x_i^{k},\bar{x}^k_i).
\end{split}
\end{equation}
Moreover, we have
\begin{equation}
    \begin{split}
        L^k_i\biggl\langle \nabla\varphi^k_i(x^{k+1}_i) - \nabla\varphi^k_i(\bar{x}_i^k), x_i^{k}- x_i^{k+1}\biggr\rangle + L^k_iD_{\varphi^k_i}(x_i^{k+1},\bar{x}^k_i)\\
       = - L^k_iD_{\varphi^k_i}(x^{k}_i,x_i^{k+1}) + L^k_iD_{\varphi^k_i}(x_i^{k},\bar{x}^k_i),
    \end{split}
\end{equation}
Therefore, we obtain
\begin{equation}
\begin{split}
    F(x^{k,i}) \leq F(x^{k,i-1})  
        - L^k_iD_{\varphi^k_i}(x^{k}_i,x_i^{k+1}) + \biggl( L^k_i + l^k_i\biggr)D_{\varphi^k_i}(x_i^{k},\bar{x}^k_i)\\
        \leq F(x^{k,i-1}) 
        - L^k_iD_{\varphi^k_i}(x^k_i,x_i^{k+1}) + \delta_iL^{k-1}_i D_{\varphi^{k-1}_i}(x_i^{k-1},x^k_i),
\end{split}
\end{equation}
where the second inequality holds by \eqref{eq:beta}. This implies A). 

B) Summing \eqref{eq:descent} over $i = 1,..,m$ gives 
\begin{equation}\label{eq:1125}
    F(x^{k+1}) \leq F(x^k) - \sum_{i=1}^mL^k_iD_{\varphi^k_i}(x^k_i,x_i^{k+1}) + \sum_{i=1}^m\delta_iL^{k-1}_i D_{\varphi^{k-1}_i}(x_i^{k-1},x^k_i).
\end{equation}
By summing up this inequality from $k = 0$ to $K - 1$, we obtain
\begin{equation*}
\begin{split}
    F(x^K) + \sum_{i=1}^m\delta_iL^{K-1}_i D_{\varphi^{K-1}_i}(x_i^{K-1},x^K_i) + \sum_{k=0}^{K-1}\sum_{i=1}^m(1-\delta_i)L^k_iD_{\varphi^k_i}(x^k_i,x_i^{k+1})\\
    \leq F(x^0) + \sum_{i=1}^m\delta L^{-1}_iD_{\varphi^{-1}_i}(x_i^{-1},x^0_i),
\end{split}
\end{equation*}
which gives the result.

C) Let $x^*$ be a limit point of $\{x^k\}$. There exists a subsequence $\{x^{k_n}\}$ of $\{x^k\}$ converging to $x^*$. We have $D_{\varphi_i^k}(x_i^k,x_i^{k+1}) \geq \frac{\rho_i^k}{2} \|x_i^k-x_i^{k+1}\|^2$ since $\varphi_i^k$ is $\rho_i^k$-strongly convex. Together with the assumption $\rho_i^k\geq \rho>0$ and \eqref{eq:sumD} we have $\|x^k-x^{k+1}\|$ converges to 0. Hence, $\{x^{k_n+1}\}$ and $\{x^{k_n-1}\}$  converge to $x^*$. Substituting $x_i = x^*_i$ and $k = k_n$ into \eqref{eq:1123} gives 
\begin{equation}
    \begin{split}
        \biggl\langle L^{k_n}_i(\nabla\varphi^{k_n}_i(x^{k_n+1}_i) - \nabla\varphi^{k_n}_i(\bar{x}_i^{k_n}))
        + \nabla f(\bar{x}^{k_n}_i,x^{k_n,i-1}_{\neq i}), x_i^*- x_i^{k_n+1}\biggr\rangle\\ + u_i(x_i^*,x^{k_n}_i) 
        \geq u_i(x_i^{k_n+1},x^{k_n}_i).
    \end{split}
\end{equation}
By taking $n\to+\infty$,
we have
\begin{equation}
    \limsup_{n\to+\infty}u_i(x_i^{k_n+1},x^{k_n}_i) \leq g_i(x_i^*),
\end{equation}
where we have used the boundedness of $L^{k_n}_i$, the continuity of $u_i(x_i,\cdot)$, $\nabla \varphi_i$, and $\nabla f$, $x^*_i\in {\rm int\, dom\,} \varphi_i(\cdot,x_{\ne i}^*)$, and the fact that $x^{k_n+1} \to x^*$ as $n\to+\infty$. From this and the lower semi-continuity of $u_i(x_i,y_i)$, we have
\begin{equation}
    \lim_{n\to+\infty}u_i(x_i^{k_n+1},x^{k_n}_i) = g_i(x_i^*).
\end{equation}
Choosing $k = k_n$ in \eqref{eq:1123} and letting $n\to+\infty$ implies that, for all $x_i\in \mathcal X_i$, 
\begin{equation}
\label{temp1}
    g_i(x^*_i) \leq u_i(x_i,x^*_i)+ \biggl\langle  \nabla f(x^*_i,x^*_{\neq i}), x_i- x_i^*\biggr\rangle.
\end{equation}
Note that $u_i(x_i,x^*_i)=g_i(x_i) + h_i(x_i, x_i^*)$ and $f(\cdot,x^*_{\ne i})$ is $(L_i^*,l_i^*)$-relative smooth to $\varphi_i^*(\cdot)=\varphi_i(\cdot,x^*_{\ne i})$ for some constant $L_i^*,l_i^*$. Therefore, from~\eqref{temp1}  we have for all $x_i\in\mathcal X_i$ that 
\begin{equation}
    \begin{split}
        F(x^*) \leq F(x_i,x^*_{\neq i}) + l_i^*D_{\varphi^*_i}(x_i,x^*_i) + h_i(x_i,x^*_i)\\
        \leq F(x_i,x^*_{\neq i}) + l_i^*D_{\varphi^*_i}(x_i,x^*_i) + \bar{h}_i(x_i,x^*_i),
    \end{split}
\end{equation}
where $\bar{h}_i$ satisfies Assumption B \eqref{assump:surrogate_assum}. This implies that $x^*_i$ is a minimizer of the following problem
\begin{equation}\label{eq:1124}
    \min_{x_i\in\mathcal X_i} F(x_i,x^*_{\neq i}) + l_i^*D_{\varphi^*_i}(x_i,x^*_i) + \bar{h}_i(x_i,x^*_i).
\end{equation}
The result follows the optimality condition of \eqref{eq:1124} and $\nabla \bar{h}_i(x_i^*,x^*) = 0$.
\end{proof}

\subsection{Global convergence}
In order to prove the global convergence of Algorithm \ref{algo:iMM_multiblock}, we need to make an additional assumption. 

\begin{assumption}\label{assumption:global}
For every iteration $k$ of Algorithm~\ref{algo:iMM_multiblock}, $f(\cdot,x_{\neq i}^{k,i-1})$ is relative smooth with respect to $\varphi_i^k$ with constants $(L^k_i,l_i^k)$ for $i\in [m]$. We will assume the following:
\begin{description}
    \item[(A)] There exist a positive integer number $N$, and $\underline{L}_i, \bar{L}_i >0$ such that 
    \begin{itemize}
            \item[$\bullet$] $\underline{L}_i \leq \min_{k\geq N}L^k_i \leq \max_{k\geq N}L^k_i \leq \bar{L}_i$ and $\delta_i < \underline{L}_i/\bar{L}_i$;
    \item[$\bullet$] for $i\in [m]$, $\varphi_i^k$ is $\rho_i^k$- strongly convex and there exists $\rho>0$ such that $\min_{k\geq N}\rho^k_i \geq \rho$.       \end{itemize}
    \item[(B)] $\nabla f$ and $\nabla \varphi_i$, for $i\in [m]$, are Lipschitz continuous on any bounded subsets of $\mathbb E$.
\end{description}
\end{assumption}
We remark that Assumption~\ref{assumption:global} (A) on the boundedness of $L_i^k$ is considered to be standard in the literature of inertial block coordinate methods, see \cite[Assumption 2]{Xu2013},  \cite[Assumption 2]{Xu2017},  \cite[Assumption 3]{Hien_ICML2020} for similar assumptions when considering block Lipschitz smooth problems. Assumptions~\ref{assumption:global} (B) is naturally satisfied when $f$ and $\varphi_i$ are twice continuously differentiable.

The global convergence of Algorithm~\ref{algo:iMM_multiblock} now can be stated for $F$ satisfying the KL property, see Definition \ref{def:KL} in \Cref{sec:prelnnopt}.

\begin{theorem}\label{theorem:globalconvergence} 
Assume that Assumptions \ref{assumption:f} to \ref{assumption:global} hold. Let $\{x^k\}$ be the sequence generated by Algorithm~\ref{algo:iMM_multiblock}. We further assume that (i) $\{x^k\}$ is bounded, (ii) for any $x_i,y_i$ in a bounded subset of $\mathcal X_i$ if $s_i\in \partial_{x_i} (I_{\mathcal X_i}(x_i) + u_i(x_i,y_i))$, there exists $\xi_i\in\partial (I_{\mathcal X_i}(x_i)+g_i(x_i))$ such that $\|\xi_i - s_i\|\leq A_i\|x_i-y_i\|$ for some constant\footnote{This assumption is naturally satisfied if $u_i(x_i,y_i)=g_i(x_i)$ (that is, we use $g_i$ itself as its surrogate). It is also satisfied if $g_i$ and $u_i$ are continuously differentiable, $\nabla_{x_i} u_i(x_i,x_i)=\nabla g_i(x_i)$, and $(x_i,y_i)\mapsto \nabla_{x_i} u_i(x_i,y_i)$ is Lipschitz continuous on any bounded subsets of  $\mathcal X_i \times \mathcal X_i$ since we then have $\nabla x_i u_i(x_i,y_i)-\nabla g_i(x_i) = \nabla_{x_i} (u_i(x_i,y_i)-u_i(x_i,x_i))$, see~\cite{Titan2020} for some examples that satisfy these conditions. } $A_i$, and (iii) $F$ satisfies the KL property at any point $x^* \in \rm{dom\,} \partial F$. Then the whole sequence $\{x^k\}$ converges to a critical point $\Phi(x) = F(x) +\sum_{i=1}^mI_{\mathcal X_i}(x_i)$. 
\end{theorem}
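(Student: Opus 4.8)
The plan is to follow the now-standard Kurdyka--{\L}ojasiewicz (KL) recipe for descent methods, as in~\cite{Attouch2010,Bolte2014}: establish (a) a \emph{sufficient decrease} property for a suitable Lyapunov function, (b) a \emph{relative error} (subgradient) bound, and then (c) deduce the finite-length property $\sum_k \|x^{k+1}-x^k\| < +\infty$ from the KL inequality. Finite length makes $\{x^k\}$ a Cauchy sequence, hence convergent to a single point $x^*$, which is a critical point of $\Phi$ by the subsequential analysis in Theorem~\ref{theorem:subsconvergence}~C). Because BMME is inertial, the natural Lyapunov function is not $F$ itself but
\[
H_k := F(x^k) + \sum_{i=1}^m \delta_i L_i^{k-1} D_{\varphi_i^{k-1}}(x_i^{k-1},x_i^k),
\]
which absorbs the extrapolation term appearing on the right-hand side of~\eqref{eq:1125}.

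First I would prove sufficient decrease. Starting from the per-iteration inequality~\eqref{eq:1125} and adding $\sum_i \delta_i L_i^k D_{\varphi_i^k}(x_i^k,x_i^{k+1})$ to both sides, the mixed extrapolation terms telescope and yield $H_{k+1} \le H_k - \sum_{i=1}^m (1-\delta_i) L_i^k D_{\varphi_i^k}(x_i^k,x_i^{k+1})$. Using $\delta_i\in(0,1)$, the lower bound $L_i^k \ge \underline{L}_i$ and the $\rho$-strong convexity of $\varphi_i^k$ from Assumption~\ref{assumption:global}~(A), i.e. $D_{\varphi_i^k}(x_i^k,x_i^{k+1}) \ge \tfrac{\rho}{2}\|x_i^k-x_i^{k+1}\|^2$, this gives $H_{k+1} \le H_k - a\,\|x^{k+1}-x^k\|^2$ for some $a>0$; in particular $\{H_k\}$ is nonincreasing and, being bounded below (as $F$ is), convergent, with $\|x^{k+1}-x^k\|\to 0$.

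Next I would establish the subgradient bound. The first-order optimality condition~\eqref{eq:1123} of the convex subproblem~\eqref{eq:iMM_update} produces $s_i^{k+1} := -L_i^k(\nabla\varphi_i^k(x_i^{k+1})-\nabla\varphi_i^k(\bar{x}_i^k)) - \nabla_i f(\bar{x}_i^k,x^{k,i-1}_{\neq i}) \in \partial_{x_i}(u_i(\cdot,x_i^k)+I_{\mathcal X_i})(x_i^{k+1})$. Invoking hypothesis~(ii) of the theorem, I replace $s_i^{k+1}$ by some $\xi_i^{k+1}\in\partial(g_i+I_{\mathcal X_i})(x_i^{k+1})$ with $\|\xi_i^{k+1}-s_i^{k+1}\|\le A_i\|x_i^{k+1}-x_i^k\|$, and then set $w_i^{k+1}:=\nabla_i f(x^{k+1})+\xi_i^{k+1}\in\partial_{x_i}\Phi(x^{k+1})$. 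Writing $w_i^{k+1}$ as the sum of $\nabla_i f(x^{k+1})-\nabla_i f(\bar{x}_i^k,x^{k,i-1}_{\neq i})$, of $\xi_i^{k+1}-s_i^{k+1}$, and of $-L_i^k(\nabla\varphi_i^k(x_i^{k+1})-\nabla\varphi_i^k(\bar{x}_i^k))$, I bound each piece via the Lipschitz continuity of $\nabla f$ and $\nabla\varphi_i$ on the bounded set containing $\{x^k\}$ (Assumption~\ref{assumption:global}~(B) and boundedness hypothesis~(i)), the boundedness of $\{L_i^k\}$, and the estimate $\|\bar{x}_i^k-x_i^k\|\le C\|x_i^k-x_i^{k-1}\|$. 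This last estimate follows from the extrapolation rule~\eqref{eq:beta} combined with $\rho$-strong convexity (to lower bound $D_{\varphi_i^k}(x_i^k,\bar{x}_i^k)$) and Lipschitz smoothness of $\nabla\varphi_i^{k-1}$ (to upper bound $D_{\varphi_i^{k-1}}(x_i^{k-1},x_i^k)$). Altogether I obtain $\|w^{k+1}\| \le b(\|x^{k+1}-x^k\| + \|x^k-x^{k-1}\|)$ for some $b>0$.

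Finally I would combine the two estimates through the KL inequality. Since the subgradient bound involves two consecutive differences, I lift the problem to the product space by setting $z^k=(x^k,x^{k-1})$ and working with the augmented function $\Psi(x,y):=\Phi(x)+\tfrac{c}{2}\|x-y\|^2$, using the standard fact that $\Psi$ inherits the KL property from $F$ (hypothesis~(iii)) together with the sandwiching of $D_{\varphi_i^{k-1}}(x_i^{k-1},x_i^k)$ between constant multiples of $\|x_i^{k-1}-x_i^k\|^2$, so that $H_k$ is comparable to $\Psi(z^k)$. The sufficient-decrease and relative-error estimates then let me apply the abstract convergence theorem for KL descent sequences to conclude $\sum_k\|z^{k+1}-z^k\|<+\infty$, hence $\sum_k\|x^{k+1}-x^k\|<+\infty$, so that $\{x^k\}$ converges to some $x^*$, identified as a critical point of $\Phi$ by Theorem~\ref{theorem:subsconvergence}~C). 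I expect the main obstacle to be the two places where the $k$-dependence of the kernels $\varphi_i^k$ (through the values of the other blocks) enters, namely controlling the inertial term in $H_k$ and the augmented KL function uniformly in $k$; this is precisely what the uniform modulus $\rho$ and the uniform Lipschitz smoothness of $\nabla\varphi_i$ on bounded sets resolve, by letting me replace every Bregman quantity with an equivalent squared Euclidean distance, at which point the classical inertial-KL machinery applies.
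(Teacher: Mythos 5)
Your skeleton is the same as the paper's (verify the Attouch--Bolte--Svaiter conditions $\mathbf{H1}$--$\mathbf{H3}$ for a Lyapunov function on the lifted sequence $z^k=(x^k,x^{k-1})$, using \eqref{eq:1125} for decrease, the optimality condition \eqref{eq:1123} plus hypothesis~(ii) for the relative-error bound, and the subsequential analysis of Theorem~\ref{theorem:subsconvergence}~C) for the continuity condition), but there is a genuine gap in how you assemble the KL step. First, your choice $H_k = F(x^k)+\sum_i \delta_i L_i^{k-1}D_{\varphi_i^{k-1}}(x_i^{k-1},x_i^k)$ telescopes, as you say, to $H_{k+1}\le H_k-\sum_i(1-\delta_i)L_i^k D_{\varphi_i^k}(x_i^k,x_i^{k+1})$, which controls only the \emph{forward} difference $\|x^{k+1}-x^k\|^2$; but your relative-error bound involves both $\|x^{k+1}-x^k\|$ and $\|x^k-x^{k-1}\|$, and the abstract theorem \cite[Theorem 2.9]{Attouch2013} requires decrease proportional to $\|z^{k+1}-z^k\|^2=\|x^{k+1}-x^k\|^2+\|x^k-x^{k-1}\|^2$ \emph{for the same function} whose subgradients you bound. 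Note also that your weights $\delta_iL_i^{k-1}$ depend on the iterate, so $H_k$ is not the evaluation of a fixed function of $(x,y)$, which obstructs the $\mathbf{H2}$ subdifferential computation. The paper resolves both issues at once by taking \emph{constant} weights $\gamma_i=(\underline{L}_i+\delta_i\bar L_i)/2$, strictly between $\delta_i\bar L_i$ and $\underline{L}_i$ (possible precisely because Assumption~\ref{assumption:global} imposes $\delta_i<\underline{L}_i/\bar L_i$), so that both coefficients $\underline{L}_i-\gamma_i$ and $\gamma_i-\delta_i\bar L_i$ equal $(\underline{L}_i-\delta_i\bar L_i)/2>0$ and the decrease of $\Phi^\gamma$ simultaneously controls $D_{\varphi_i^{k-1}}(x_i^{k-1},x_i^k)$ and $D_{\varphi_i^k}(x_i^k,x_i^{k+1})$, hence $\tau\|z^{k+1}-z^k\|^2$.

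The more serious defect is your final move: passing from the Bregman quantities to the Euclidean-augmented function $\Psi(x,y)=\Phi(x)+\frac{c}{2}\|x-y\|^2$ on the grounds that $H_k$ is ``comparable'' to $\Psi(z^k)$. Comparability of values does not transfer monotone decrease, and if you instead try to prove sufficient decrease for $\Psi$ directly from \eqref{eq:1125}, the two-sided bounds $\frac{\rho}{2}\|\cdot\|^2\le D_{\varphi_i^k}(\cdot,\cdot)\le\frac{L^{\varphi_i}}{2}\|\cdot\|^2$ (with $L^{\varphi_i}$ the Lipschitz constant of $\nabla\varphi_i$ on the bounded set containing the iterates) force you to find $c$ with $\delta_i\bar L_i L^{\varphi_i}\le c\le \underline{L}_i\rho$; since Assumption~\ref{assumption:global} only guarantees $\delta_i\bar L_i<\underline{L}_i$ and one always has $\rho\le L^{\varphi_i}$, such a $c$ need not exist --- the factor $L^{\varphi_i}/\rho$ lost in the norm equivalence is not covered by the hypotheses. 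This is exactly why the paper keeps the Bregman divergences \emph{inside} the auxiliary function
\begin{equation*}
\Phi^\gamma(x,y)=\Phi(x)+\sum_{i=1}^m\gamma_i D_{\varphi_i}\big((x_1,\ldots,x_{i-1},y_i,\ldots,y_m),(x_1,\ldots,x_i,y_{i+1},\ldots,y_m)\big):
\end{equation*}
the identical Bregman term then appears on both sides of the descent inequality, no equivalence loss occurs, and the strong convexity modulus $\rho$ is invoked only once, on the favorable side, to convert the already-established decrease into $\tau\|z^{k+1}-z^k\|^2$. Your $\mathbf{H2}$ and $\mathbf{H3}$ steps are essentially the paper's (though the estimate $\|\bar x_i^k-x_i^k\|\le\beta_i^k\|x_i^k-x_i^{k-1}\|$ is immediate from the definition of $\bar x_i^k$, no detour through \eqref{eq:beta} is needed), but as written the decrease/KL portion of your argument would not go through without replacing your Lyapunov function by the paper's $\Phi^\gamma$ or an equivalent constant-weight Bregman construction.
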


\begin{proof}

Consider the following auxiliary function
\begin{equation}
\begin{split}
    &\Phi^\gamma(x,y)=\\ 
    & \Phi(x) + \sum_{i=1}^m\gamma_i D_{\varphi_i}\big((x_1,\ldots,x_{i-1},y_{i},\ldots,y_m),(x_1,\ldots,x_{i},y_{i+1},\ldots,y_m)\big)
    \end{split}
\end{equation}
where $\gamma_i = (\underline{L}_i + \delta_i\bar{L}_i)/2$, and let us denote $z^k = (x^k,x^{k-1})$. Then we have $\Phi^\gamma(z^k)= \Phi(x^k)+\sum_{i=1}^m\gamma_i D_{\varphi^{k-1}_i} (x^{k-1}_i,x^{k}_i)$. Here we only need to prove that the sequence $\{z^k\}$ satisfies the three conditions $\mathbf H1, \mathbf H2$, and $\mathbf H3$ in \cite{Attouch2013} since the result can be derived by using these conditions and the same arguments of the proof for \cite[Theorem 2.9]{Attouch2013}.

($\mathbf H1$) \emph{Sufficient decrease condition}.
It follows from \eqref{eq:1125} that for all $k\geq N+1$
\begin{equation}
   \sum_{i=1}^m\underline{L}_i D_{\varphi_i^k}(x^k_i,x_i^{k+1}) +  F(x^{k+1}) \leq F(x^k) + \sum_{i=1}^m\delta_i\bar{L}_i D_{\varphi^{k-1}_i}(x_i^{k-1},x^k_i).
\end{equation}
Therefore, we have
\begin{equation*}
    \begin{split}
        \Phi^\gamma(z^k) - \Phi^\gamma(z^{k+1}) &\geq \sum_{i=1}^m\frac{\underline{L}_i - \delta_i\bar{L}_i}{2}\biggl(D_{\varphi_i^{k-1}}(x_i^{k-1},x^k_i) + D_{\varphi_i^k}(x^k_i,x_i^{k+1})\biggr)\\
       & \geq \sum_{i=1}^m\frac{(\underline{L}_i - \delta_i\bar{L}_i)\rho_i}{4}\biggl(\|x_i^{k-1}-x^k_i\|^2 + \|x^k_i - x_i^{k+1}\|^2\biggr)\\
        &\geq \tau \|z^{k+1}-z^k\|^2,
    \end{split} 
\end{equation*}
where $\tau = \min_i (\underline{L}_i - \delta_i\bar{L}_i)\rho_i/4 >0$ due to Assumption \ref{assumption:global}.

($\mathbf H2$) \emph{Relative error condition}.
By using the optimal condition of the subproblem \eqref{eq:iMM_update} in BMME, we have for all $k\geq N+1$
\begin{equation*}
    s_i^{k+1}: = L_i^k(\nabla \varphi^k_i(\bar x_i^k) - \nabla \varphi_i^k(x_i^{k+1}))  -  \nabla_i f(\bar x_i^k,x^{k,i-1}_{\neq i})\in \partial (I_{\mathcal X_i}(x_i^{k+1}) + u_i(x_i^{k+1},x_i^k)).
\end{equation*}
Hence, there exists $\xi_i^{k+1}\in\partial (I_{\mathcal X_i}(x_i^{k+1}) + g_i(x^{k+1}_i))$ such that 
\begin{equation}\label{eq--1}
    \|\xi_i^{k+1} - s_i^{k+1}\|\leq A_i\|x^{k+1}_i - x^k_i\|,
\end{equation}
for some $A_i$. Therefore, we have $d^{k+1}_i: = \nabla_{x_i}f(x^{k+1}) + \xi_i^{k+1} \in \partial_{x_i} \Phi(x^{k+1})$ and
\begin{equation*}
    \begin{split}
        \left\|d^{k+1}_i\right\| = 
         \left\| \nabla_{x_i}f(x^{k+1}) + s_i^{k+1} + \xi_i^{k+1} - s_i^{k+1}\right\|
        \leq  \left\|L_i^k(\nabla \varphi^k_i(\bar x_i^k) - \nabla \varphi^k_i(x_i^{k+1}))\right\| \\ +  \left\|\nabla_{x_i}f(x^{k+1})- \nabla_if(\bar x_i^k,x^{k,i-1}_{\neq i})\right\|
        +  \left\|\xi_i^{k+1} - s_i^{k+1}\right\| \\
        \leq \left(\left(\bar L_iL^{\varphi_i} + L_i^f\right)\left(1+\beta_i^k\right) + A_i\right) \left( \left\|x^{k+1}_i-x^k_i \right\| +  \left\|x^k_i-x^{k-1}_i \right\|\right),
    \end{split}
\end{equation*}
where the second inequality holds by \eqref{eq--1}, the boundedness of $\{x^k\}$, and the local Lipschitz continuity of $\nabla\varphi_i$ and $\nabla f$.
On the other hand, $ \partial\Phi^\gamma(z^{k+1}) = \Big(\partial_x\Phi^\gamma(z^{k+1}) , \partial_y\Phi^\gamma(z^{k+1})\Big)$, where 
\begin{equation}
    \begin{split}
        \partial_{x_i}\Phi^\gamma(z^{k+1})& = 
         \partial_{x_i} \Phi(x^{k+1}) + \sum_{j=i+1}^m\gamma_j \big(\nabla_i\varphi_j(x^{k,j-1})-\nabla_i\varphi_j(x^{k,j})\big)\\
         &\qquad-\sum_{j=i}^m\gamma_j\nabla^2_{i j}\varphi_j(x^{k,j})(x^{k}_j-x^{k+1}_j) 
    \end{split}
\end{equation}
and
\begin{equation}
    \begin{split}
        \partial_{y_i}\Phi^\gamma(z^{k+1})& =\sum_{j=1}^{i-1}\gamma_j \big(\nabla_i\varphi_j(x^{k,j-1}) - \nabla_i\varphi_j(x^{k,j})\big)-\sum_{j=1}^i\gamma_j\nabla^2_{i j}\varphi_j(x^{k,j})(x^{k}_j-x^{k+1}_j)  .
    \end{split}
\end{equation}
Therefore, we can deduce the relative error condition from the results above.

($\mathbf H3$) \emph{Continuity condition}. 
Let $x^*$ be a limit point of $\{x^k\}$. Since $\{x^k\}$ is bounded, there exists a subsequence $\{x^{k_n}\}$ of $\{x^k\}$ converging to $x^*$. Similarly to the proof of Theorem \ref{theorem:subsconvergence} (C), we can show that $u_i(x_i^{k_n},x^{k_n-1}_i) \to g_i(x_i^*)$ as $n\to+\infty$. Therefore, we have
\begin{equation}
    \begin{split}
        \limsup_{n\to+\infty}F(x^{k_n}) \leq \limsup_{n\to+\infty}f(x^{k_n}) +\sum_{i=1}^mu_i(x_i^{k_n},x^{k_n-1}_i)\\
        = f(x^*) + \sum_{i=1}^mg_i(x^*_i).
    \end{split}
\end{equation}
On the other hand, since $\{\Phi^\gamma(x^k,x^{k-1})\}$ is non-increasing and bounded below, there exists $F^* = \lim_{k\to+\infty}\Phi^\gamma(x^k,x^{k-1})$. Moreover,  $\lim_{k\to+\infty}D_{\varphi^{k-1}_i}(x_i^{k-1},x^k_i) = 0$. This implies that $\lim_{k\to+\infty}F(x^k) = \lim_{k\to+\infty}\Phi^\gamma(x^k,x^{k-1}) = F^*$. By the uniqueness of the limit, we have $F^* = F(x^*)$.

By \cite[Theorem 2.9]{Attouch2013} we have $z^k$ converges to a critical point $(x^*,x^*)$ of $\Phi^\gamma$. Note that $\partial\Phi^\gamma(x^*,x^*)=(\partial \Phi(x^*),0)$. The result follows then.
\end{proof}

\paragraph{Convergence rate.} We end this section by a remark on the convergence rate of BMME. By using the same arguments of the proof for~\cite[Theorem~2]{Attouch2009} we can derive a convergence rate for the generated sequence  of BMME (see also  \cite[Theorem 3.14]{ahookhosh2020_inertial}, \cite[Theorem 4.7]{ahookhosh2019multi}, \cite[Theorem~3]{Hien_ICML2020} and \cite[Theorem~2.9]{Xu2013}). We note that the convergence rate appears to be the same in different papers using the technique in \cite{Attouch2009}. Specifically, suppose  $\mathbf a$ be a constant such that $\xi(s)=c s^{1-\mathbf a}$, where $c$ is a constant, see   Definition~\ref{def:KL}. Then if $\mathbf a=0$, BMME converges after a finite number of  steps; if $\mathbf a\in (0,1/2]$, BMME has linear convergence; and if $\mathbf a \in (1/2, 1)$, BMME has sublinear convergence. Determining the K{\L}  exponent $\mathbf a$ is out of the scope of this paper.  

\section{Numerical results}
\label{sec:experiment}

In this section, we apply BMME to solve the following penalized orthogonal nonnegative matrix factorization (NMF)~\cite{ahookhosh2019multi,pompili2014two} 
	\begin{equation}
	\label{eq:penelONMF}
	\min_{U\in \mathbb R_+^{\mbfm\times \mbfr}, V\in \mathbb R_+^{\mbfr\times \mbfn}} f(U,V):= \tfrac{1}{2}\|X-UV\|_F^2+\tfrac{\lambda}{2}\|I_r-V V^\top\|_F^2, 
		\end{equation}
where $X\in \mathbb R^{\mbfm\times \mbfn}_+$ is a given input nonnegative data matrix and  $\lambda>0$ is a penalty parameter.
We implement all of the algorithms in MATLAB R2018a and run the experiments on a laptop with 1.8 GHz Intel Core i7 CPU and 16 GB RAM. The codes are available at \url{https://github.com/LeThiKhanhHien/BMME}.

\label{sec:experiment1}


\subsection{Kernel functions and block updates of BMME} \label{kernelONMF}

To implement BMME (Algorithm~\ref{algo:iMM_multiblock}), 
we use the following  kernel functions
\begin{equation}
\label{eq:h1h2}
\begin{split}
    \varphi_{1}(U,V)&=\frac{1}{2}\| U\|_{F}^{2},\\
    \varphi_{2}(U,V)&=\tfrac{6\lambda}{4}\|V\|_{F}^{4}+\tfrac{1}{2} \varepsilon(U) \|V\|_{F}^{2},
\end{split} 
\end{equation} 
where $\varepsilon(U)>0$ may depend on $U$. 
Let us choose $\varepsilon(U)=\max\{\|U^\top U\|,2\lambda\}$. Note that $U\mapsto \varphi_{1}(U,V)$ and $V \mapsto  \varphi_{2}(U,V)$ are strongly convex.  
Let us show that $f$ is block relative smooth with respect to these kernel functions. 

\begin{proposition} \label{prop1}
 Fixing $V$, the function $f(\cdot,V)$ is  $(L_{1}(V),l_1)$-relatively smooth with respect to $\varphi_{1}(\cdot,V)$, with $L_{1}(V)=\|V V^{\top}\|$ and $l_1=0$. Fixing $U$, $f(U,\cdot)$ is $(L_{2},l_2)$- relatively smooth with respect to $\varphi_{2}(U,\cdot)$, with $L_{2}=1$ and $l_2=1$.
\end{proposition}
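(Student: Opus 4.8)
The plan is to verify Definition~\ref{def:relative_smooth} for each block through a second-order (Hessian) comparison. Since $f$ and both kernels are polynomials, they are $C^2$ on the whole space, so for any fixed other block the remainder $R_\phi(x,y):=\phi(x)-\phi(y)-\langle\nabla\phi(y),x-y\rangle$ can be written as $\int_0^1(1-t)\langle\nabla^2\phi(y+t(x-y))(x-y),x-y\rangle\,dt$, and likewise for $D_\psi$. Hence it suffices to establish the pointwise sandwich $-l\,\nabla^2\psi\preceq\nabla^2\phi\preceq L\,\nabla^2\psi$ on quadratic forms in the increment and then integrate; the two-sided estimate of Definition~\ref{def:relative_smooth} follows at once. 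I first note that $U\mapsto\varphi_1(U,V)$ is $1$-strongly convex and $V\mapsto\varphi_2(U,\cdot)$ is strongly convex with modulus $\varepsilon(U)>0$, so both are genuine kernel generating distances.

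For the $U$-block the second term of $f$ is constant in $U$, so $f(\cdot,V)$ is a convex quadratic and $R_{f(\cdot,V)}(U,U_0)=\tfrac12\|(U-U_0)V\|_F^2$, while $D_{\varphi_1}(U,U_0)=\tfrac12\|U-U_0\|_F^2$. Convexity gives the lower bound with $l_1=0$ for free, and for the upper bound I use $\|(U-U_0)V\|_F^2=\langle (U-U_0)VV^\top,U-U_0\rangle\le\|VV^\top\|\,\|U-U_0\|_F^2$, where $\|VV^\top\|$ is the spectral norm; this yields $L_1(V)=\|VV^\top\|$.

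For the $V$-block I compute the two Hessian quadratic forms at $V$ in a direction $H$. Writing $g:=f(U,\cdot)$ and expanding the quartic penalty as $\tfrac\lambda2\|I_r-VV^\top\|_F^2=\tfrac{\lambda}{2}\big(r-2\|V\|_F^2+\|VV^\top\|_F^2\big)$, I obtain
\[
\langle\nabla^2 g(V)H,H\rangle=\|UH\|_F^2+2\lambda\big(\tr((HV^\top)^2)+\|HV^\top\|_F^2+\|V^\top H\|_F^2\big)-2\lambda\|H\|_F^2,
\]
and, differentiating $\tfrac{6\lambda}{4}\|V\|_F^4+\tfrac12\varepsilon(U)\|V\|_F^2$ twice,
\[
\langle\nabla^2\varphi_2(U,V)H,H\rangle=12\lambda\langle V,H\rangle^2+6\lambda\|V\|_F^2\|H\|_F^2+\varepsilon(U)\|H\|_F^2.
\]
The upper bound $\nabla^2 g\preceq\nabla^2\varphi_2$ then follows from elementary estimates: $\|UH\|_F^2\le\|U^\top U\|\,\|H\|_F^2\le\varepsilon(U)\|H\|_F^2$, and each of $\tr((HV^\top)^2)$, $\|HV^\top\|_F^2$, $\|V^\top H\|_F^2$ is $\le\|V\|_F^2\|H\|_F^2$, so their sum times $2\lambda$ is dominated by the available $6\lambda\|V\|_F^2\|H\|_F^2$; this gives $L_2=1$. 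For the lower bound $\nabla^2 g\succeq-\nabla^2\varphi_2$, all terms of $\langle\nabla^2 g H,H\rangle$ are nonnegative except $-2\lambda\|H\|_F^2$ (using $\tr((HV^\top)^2)\ge-\|HV^\top\|_F^2$, the bracket is $\ge\|V^\top H\|_F^2\ge0$), and this single deficit is absorbed by $\varepsilon(U)\|H\|_F^2\ge2\lambda\|H\|_F^2$; this gives $l_2=1$.

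The main obstacle, and the crux of the calculation, is the correct second differentiation of the quartic terms $\|I_r-VV^\top\|_F^2$ and $\|V\|_F^4$ together with the bookkeeping that matches their quadratic forms; once these are in hand the inequalities are routine. This computation also explains the choice $\varepsilon(U)=\max\{\|U^\top U\|,2\lambda\}$: the first argument is precisely what dominates $\|UH\|_F^2$ in the upper bound, while the second is precisely what dominates the $2\lambda\|H\|_F^2$ deficit in the lower bound.
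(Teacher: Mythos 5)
Your proof is correct and takes essentially the same approach as the paper: both reduce relative smoothness to the pointwise Hessian sandwich $-l\,\nabla^2\varphi\preceq\nabla^2 f\preceq L\,\nabla^2\varphi$ (the paper by citing \cite[Proposition 1.1]{lu2018relatively} and \cite[Proposition 2.6]{ahookhosh2019bregman}, you via the integral Taylor remainder) and then carry out the same quadratic-form computations, with $\varepsilon(U)=\max\{\|U^\top U\|,2\lambda\}$ dominating $\|UH\|_F^2$ in the upper bound and the $-2\lambda\|H\|_F^2$ deficit in the lower bound. Your only departures are cosmetic: you write out the $U$-block argument that the paper dismisses as straightforward, and in the lower bound you observe directly that $\tr((HV^\top)^2)+\|HV^\top\|_F^2+\|V^\top H\|_F^2\geq 0$, whereas the paper absorbs these cross terms with the $6\lambda\|V\|_F^2\|Z\|_F^2$ term via its inequality \eqref{eq:ieq}.
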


\begin{proof}  
The first statement is straightforward. Let us prove the second one. 
From~\cite[Proposition 5.1]{ahookhosh2019multi}, we have
$$
\nabla_V^{2}f (U,V)[Z]=U^{\top}U Z+2\lambda(Z V^{\top}V+V Z^{\top}V+V V^{\top}Z-Z).
$$
Note that 
\begin{equation}
    \label{eq:ieq}
 \left| \iprod{Z V^{\top}V+V Z^{\top}V+V V^{\top}Z }{Z} \right|\leq 3 \|V\|_F^2\|Z\|_F^2.
\end{equation}
Hence 
$$\left\langle \nabla_V^{2}f(U,V)[Z],Z\right\rangle \leq\left\Vert U^{\top}U\right\Vert \left\Vert Z\right\Vert _{F}^{2}+6\lambda\left\Vert V\right\Vert _{F}^{2}\left\Vert Z\right\Vert _{F}^{2}.
$$
Furthermore, we have  $$\nabla_V^{2} \varphi_{2}(U,V)[Z]=6\lambda\left(\|V\|_{F}^{2}Z+2\left\langle V,Z\right\rangle V\right) +
\max\{\| U^{\top}U\|,\varepsilon\} Z,$$
which implies 
\begin{align*}
L_2\left\langle \nabla_V^{2}\varphi_{2}(U,V)[Z],Z\right\rangle 	&=\max\{\left\Vert U^{\top}U\right\Vert,2\lambda\} \left\Vert Z\right\Vert ^{2}+6\lambda\left(\|V\|_{F}^{2}\left\Vert Z\right\Vert _{F}^{2}+2\left\langle V,Z\right\rangle ^{2}\right)\\
&	\geq\left\Vert U^{\top}U\right\Vert \left\Vert Z\right\Vert _{F}^{2}+6\lambda\|V\|_{F}^{2}\left\Vert Z\right\Vert _{F}^{2}	\geq\left\langle \nabla_V^{2}f (U,V)[Z],Z\right\rangle.   
\end{align*}
On the other hand, since   $\max\{\|U^{\top}U\|,2\lambda\}\geq 2\lambda$ we have 
\begin{align*}
&\left\langle \nabla_V^{2}f (U,V)[Z],Z\right\rangle + l_2\left\langle \nabla_V^{2}\varphi_{2}(U,V)[Z],Z\right\rangle \\
&\geq \left\langle U^{\top}U Z+2\lambda(Z V^{\top}V+V Z^{\top}V+V V^{\top}Z),Z\right\rangle +  6\lambda\|V\|_{F}^{2}\|Z\|_{F}^{2}\geq 0,
\end{align*}
where we have used~\eqref{eq:ieq} for the last inequality. The result follows, see \cite[Proposition 1.1]{lu2018relatively}, \cite[Proposition 2.6]{ahookhosh2019bregman}. \qed
\end{proof}


Proposition~\ref{prop1} shows that the kernel functions in~\eqref{eq:h1h2} allow $f$ to satisfy 
Definition~\ref{def:block_relative_smooth}, that is, $f$ is block relative smooth with respect to these kernels. 
This would not hold for the block relative smoothness definitions from \cite[Definition 2]{ahookhosh2020_inertial} and \cite[Definition 3.4]{ahookhosh2019multi}. 
In fact, $L_1(V)$ depends on $V$ so \cite[Definition 2]{ahookhosh2020_inertial} does not apply, while  $\varphi_1$ and $\varphi_2$ are two different functions so \cite[Definition 3.4]{ahookhosh2019multi} does not apply either as it requires a sole multi-block kernel function to define block relative smoothness.


In the following we provide closed-form solutions of the sub-problems in~\eqref{eq:iMM_update} for the penalized ONMF problem.  
\begin{proposition} \label{prop:ONMFBMME} Let $\varphi_1$ and $\varphi_2$ be defined in~\eqref{eq:h1h2}. 
Given $\bar U$, $V$, and $L_1$, we have 
$$
 \arg\min_{U\geq0}\left\langle \nabla_{U}f(\bar U,V),U\right\rangle + L_1 D_{\varphi_{1}(\cdot,V)}(U,\bar U)
	=\max\left(\bar U-\frac{1}{L_1}\left(\bar U V V^{\top}-XV^{\top}\right),0\right).$$
Given $\bar V$, $ U$, and $L_2$  we have 
	$$
	\arg\min_{V\geq0}\left\langle \nabla_{U}f( U,\bar V),V\right\rangle + L_2 D_{\varphi_{2} (U,\cdot)}(V,\bar V)
	\; = \;  \frac{1}{\rho}\max(G(\bar V),0),
$$
where 
\begin{align*}
    G(\bar V)&=\nabla_V \varphi_{2}(U,\bar V)-\frac{1}{L_2}\nabla_{V}f(U,\bar V)\\
    &= (6\lambda \|\bar V\|^2_F + \varepsilon(U)) \bar V -\frac{1}{L_2} \big( U^\top U \bar V - U^\top X + 2\lambda(\bar V \bar{V}^\top \bar V - \bar V)\big),
\end{align*}
and $\rho$ is the unique real solution of the equation $\rho^2(\rho-   a)=c$, where $a= \varepsilon(U)$ and $c=6\lambda \|\max(G(\bar V),0) \|^2$, so that $\rho$ has the following closed form 
$$\rho=\frac{a}{3} + \sqrt[3]{\frac{c+\sqrt{\Delta}}{2}+\frac{a^3}{27}} + \sqrt[3]{\frac{c-\sqrt{\Delta}}{2}+\frac{a^3}{27}},
$$
where $\Delta=c^2 + \frac{4}{27} c a^3$.
\end{proposition}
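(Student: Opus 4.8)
The plan is to treat the two subproblems separately, since they have quite different structure: the $U$-update reduces to a familiar Euclidean projection, whereas the $V$-update requires inverting a nonlinear mirror map and solving a scalar cubic.

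For the first identity I would begin by noting that since $\varphi_1(\cdot,V)=\tfrac12\|\cdot\|_F^2$ is the Euclidean kernel, its Bregman divergence collapses to $D_{\varphi_1(\cdot,V)}(U,\bar U)=\tfrac12\|U-\bar U\|_F^2$. Substituting $\nabla_U f(\bar U,V)=\bar U V V^\top - X V^\top$, the objective becomes $\iprod{\nabla_U f(\bar U,V)}{U}+\tfrac{L_1}{2}\|U-\bar U\|_F^2$, which after completing the square equals $\tfrac{L_1}{2}\|U-(\bar U-\tfrac1{L_1}\nabla_U f(\bar U,V))\|_F^2$ up to an additive constant. Because this is separable across entries and the feasible set is the entrywise nonnegative orthant, the unique minimizer is the Euclidean projection of $\bar U-\tfrac1{L_1}\nabla_U f(\bar U,V)$ onto that orthant, i.e. the stated $\max(\cdot,0)$ formula. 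This part is routine.

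For the second identity I would first observe that $\varphi_2(U,\cdot)$ is strongly convex (indeed $\|V\|_F^4$ is convex as the square of $\|V\|_F^2$), so the $V$-subproblem is strongly convex and its unique solution is characterized by its KKT conditions. Writing $\Lambda\geq0$ for the multiplier of the constraint $V\geq0$, stationarity reads $L_2\big(\nabla_V\varphi_2(U,V^*)-\nabla_V\varphi_2(U,\bar V)\big)+\nabla_V f(U,\bar V)-\Lambda=0$, which rearranges to $\nabla_V\varphi_2(U,V^*)=G(\bar V)+\tfrac1{L_2}\Lambda$ with $G(\bar V)$ as defined in the statement. The key structural fact is that $\nabla_V\varphi_2(U,V)=(6\lambda\|V\|_F^2+\varepsilon(U))V=\rho V$, a positive scalar multiple of $V$. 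Reading the KKT system entrywise together with complementary slackness $\Lambda\odot V^*=0$ then gives, in each coordinate, $V^*_{ij}=G(\bar V)_{ij}/\rho$ when $G(\bar V)_{ij}>0$ and $V^*_{ij}=0$ otherwise; since $\rho>0$ does not affect signs, this is exactly $V^*=\tfrac1\rho\max(G(\bar V),0)$. It then remains to pin down the still-unknown scalar $\rho=6\lambda\|V^*\|_F^2+\varepsilon(U)$. The crucial point is that the threshold pattern $\max(G(\bar V),0)$ is independent of $\rho$, so taking Frobenius norms yields $\|V^*\|_F^2=\rho^{-2}\|\max(G(\bar V),0)\|_F^2$; inserting this into the definition of $\rho$ and clearing denominators produces the cubic $\rho^2(\rho-a)=c$ with $a=\varepsilon(U)$ and $c=6\lambda\|\max(G(\bar V),0)\|_F^2$. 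I would verify that for $c\geq0$ this cubic has a unique admissible positive root, for instance by checking that $\rho\mapsto\rho^3-a\rho^2$ attains local maximum value $0$ at $\rho=0$ and is strictly increasing beyond its local minimizer $\rho=2a/3$, so the horizontal line at height $c\geq0$ meets the graph exactly once on $(a,\infty)$. Finally, applying Cardano's formula to the depressed cubic obtained via $\rho=t+a/3$ (with coefficients $p=-a^2/3$ and $q=-c-2a^3/27$, whose discriminant term $q^2/4+p^3/27$ simplifies to $\tfrac14\Delta$ with $\Delta=c^2+\tfrac{4}{27}ca^3$) delivers the stated closed form.

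The main obstacle I anticipate is the $V$-update: one must correctly argue that the nonlinear mirror inversion decouples into a sign-determining thresholding (independent of $\rho$) followed by a single global rescaling, and then handle the scalar fixed-point for $\rho$ rigorously, both establishing that the cubic has exactly one admissible root and matching the Cardano expression to the advertised formula. The $U$-update and the convexity/KKT bookkeeping are comparatively straightforward.
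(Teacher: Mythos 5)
Your proposal is correct and follows essentially the same route as the paper: the paper likewise rewrites each subproblem as $\arg\min_{U\geq 0}\varphi_1(U,V)-\bigl\langle \nabla_U\varphi_1(\bar U,V)-\tfrac{1}{L_1}\nabla_U f(\bar U,V),U\bigr\rangle$ (yielding the Euclidean projection) and $\arg\min_{V\geq 0}\varphi_2(U,V)-\langle G(\bar V),V\rangle$, and then settles the $V$-step by citing the technique of \cite[Theorem 5.2]{ahookhosh2019multi}. Your KKT/thresholding/Cardano computation simply makes that cited step explicit, and it checks out: the sign pattern of $\max(G(\bar V),0)$ is indeed $\rho$-independent, the fixed-point relation $\rho=6\lambda\|V^*\|_F^2+\varepsilon(U)$ gives $\rho^2(\rho-a)=c$ with a unique root in $(a,\infty)$ for $c>0$, and your depressed-cubic coefficients $p=-a^2/3$, $q=-c-2a^3/27$ satisfy $q^2/4+p^3/27=\Delta/4$, matching the stated formula.
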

\begin{proof}
For the update of $U$, we have 
\begin{align*}
&\arg\min_{U\geq0}\left\langle \nabla_{U} f(\bar U,V),U\right\rangle + L_1 D_{\varphi_{1}(\cdot,V)}(U,\bar U)\\
&	=\arg\min_{U\geq0}\left\langle \nabla_{U}f(\bar U,V),U\right\rangle + L_1\left(\varphi_{1}(U,V)-\varphi_{1}(\bar U,V)-\left\langle \nabla_U\varphi_{1}(\bar U,V),U-\bar U\right\rangle \right)\\
&	=\arg\min_{U\geq0}\varphi_{1}(U,V)-\left\langle \nabla_U \varphi_{1}(\bar U,V)-\frac{1}{L_1}\nabla_{U}f(\bar U,V),U\right\rangle\\ 
&	=\max\left(\nabla_U \varphi_{1}(\bar U)-\frac{1}{L_1}\nabla_{U}f(\bar U,V),0\right)\\
&	=\max\left(\bar U-\frac{1}{L_1}\left(\bar U V V^{\top}-X V^{\top}\right),0\right).
	\end{align*}
	For the update of $V$, we have
\begin{align*}
    &\arg\min_{V\geq0}\left\langle \nabla_{V} f(U,\bar V),V\right\rangle +L_2 D_{\varphi_{2}(U,\cdot)}(V,\bar V)\\
    &=\arg\min_{V\geq0}\left\langle \nabla_{V}f(U,\bar V),V\right\rangle +L_2\left(\varphi_{2}(U,V)-\varphi_{2}(U,\bar V)-\left\langle \nabla_V \varphi_{2}(U,\bar V),V\right\rangle \right)\\
    &=\arg\min_{V\geq0}\varphi_{2}(U,V)-\left\langle G(\bar V),V\right\rangle. 
\end{align*}	
 Using the same technique as in the proof of \cite[Theorem 5.2]{ahookhosh2019multi}, we obtain the result. \qed
\end{proof}

\paragraph{Computational cost of BMME for Penalized ONMF} 

The updates of BMME for~\eqref{eq:penelONMF} are given by Proposition~\ref{prop:ONMFBMME}. 
The main cost of the update of $U$ is to compute  $U(VV^\top)$ and 
$XV^\top$ which require 
$O( (\mbfm+\mbfn)\mbfr^2)$ and 
$O( \mbfm\mbfn\mbfr )$
operations, respectively. Since $\mbfr \ll \min(m,n)$, the update of $U$ costs $O( \mbfm\mbfn\mbfr )$
operations, and is linear in the dimensions of the input matrix, as most NMF algorithms. 
The main cost of the update of $V$ is to compute  $(U^\top U) \bar V$, 
$U^\top X$, and $\bar V (\bar{V}^\top \bar V)$ which require 
$O( (\mbfm+\mbfn)\mbfr^2)$,
$O( \mbfm\mbfn\mbfr )$ and  $O(\mbfn\mbfr^2)$ operations, respectively. 
Evaluating $D(\cdot,\cdot)$ in the backtracking line search to compute the extrapolation parameter costs $O(\mbfn\mbfr)$ operations.  
In summary, BMME requires $O( \mbfm\mbfn\mbfr )$ operations per iteration. 
Note that, if $X$ is sparse, the cost per iteration reduces to $O( \text{nnz}(X) \mbfr )$ operations where $\text{nnz}(X)$ is the number of nonzero entries of $X$. 

In summary, BMME has the same computational cost per iteration as most  NMF algorithms, requiring $O( \text{nnz}(X) \mbfr )$ operations per iteration, the main cost being the computation of $U(VV^\top)$, 
$XV^\top$, $(U^\top U)V$, and $U^\top X$; see~\cite[Chapter 8]{gillis2020book} for a discussion.

\subsection{Experiments on synthetic and real data sets}  \label{exp:synthreal}

In the following, we compare the following algorithms on the penalized ONMF problem: 
\begin{itemize}
\item BMME with the kernel functions defined in~\eqref{eq:h1h2}. At iteration $k$ and for updating the blocks $U$ and $V$, we find the extrapolation parameter $\beta_i^k$ for BMME by starting from $\nu_k$, where $\nu_0=1$ and $\nu_k = 1/2(1+\sqrt{1+4\nu_{k-1}})$ for $k \geq 1$, and then reducing it by a factor $0.9$ until the condition~\eqref{eq:beta} is satisfied. 


\item BMM, the non-extrapolated version of BMME.  

\item A-BPALM proposed in \cite{ahookhosh2019multi}. 

\item BIBPA proposed in \cite{ahookhosh2020_inertial}.
\end{itemize} 

We use the kernel function $\varphi (U,V)=\frac{\alpha}{2}\|U\|_F^2 \|V\|_F^2 + \frac{\beta}{4} \|V\|_F^4 + \frac{\varepsilon_1}{2}\|U\|_F^2 + \frac{\varepsilon_2}{2}\|V\|_F^2$, where $\alpha, \beta,\varepsilon_1$ and $\varepsilon_2$ are positive constants, for A-BPALM and BIBPA as proposed in \cite[Proposition 5.1]{ahookhosh2019multi}, and choose the default values for the parameters of A-BPALM and BIBPA in the upcoming experiments.  All of these algorithms have convergence guarantee for solving the penalized ONMF problem~\eqref{eq:penelONMF}, 
 and have roughly the same computational cost per iteration, requiring $O( \text{nnz}(X) \mbfr )$ 
 operations.  
 We will display the evolution of the objective function values with respect to time; the evolution with respect to the iterations being very similar. 
 
 In the following four sections, we compare the four algorithms above on four types of data sets: 
 synthetic data sets (Section~\ref{sec:synth}), 
 facial images (Section~\ref{sec:facim}), 
 and 
  document data sets (Section~\ref{sec:docu}).

\subsubsection{Synthetic data sets} \label{sec:synth}

Let us compare the algorithms
on synthetic data sets, 
as done in~\cite{ahookhosh2019multi}. 
We use $(\mbfm,\mbfn,\mbfr)=(500,500,10)$ and $(\mbfm,\mbfn,\mbfr)=(500,2000,10)$. For each choice of $(\mbfm,\mbfn,\mbfr)$, we generate 30 synthetic data sets; each data set is generated as in. Specifically,  we generate randomly the factor $U \in \mathbb R_+^{\mbfm\times \mbfr}$  and the noise matrix $R\in \mathbb R_+^{\mbfm\times \mbfn}$ using the MATLAB command  $\mathsf{rand}$. We generate an  orthogonal nonnegative matrix $V\in \mathbb R_+^{\mbfr\times \mbfn}$ with a single nonzero entry in each column of $V$ as follows.  The  position of the nonzero entry is picked at random (with probability $1/r$ for each position), 
then the nonzero entry is generated using the uniform distribution in the interval $[0,1]$, and finally we normalize each row of $V$.  
Then we construct $X$, adding 5\% of noise, as follows 
\[
X \; = \;  UV + 0.05 \frac{\| X\|_F}{\|R\|_F } R . 
\] 
For each data set, we run
each algorithm for 15 seconds and use the same initialization for all algorithms, namely the successive projection algorithm (SPA)~\cite{Araujo01,gillis2013fast} as done in \cite{ahookhosh2019multi}. 
We set the penalty parameter $\lambda=1000$ in our experiments. We report the evolution of the objective function with respect to time in Figure~\ref{fig:synthetic}.
\begin{figure}
    \centering
    \includegraphics[scale=0.38]{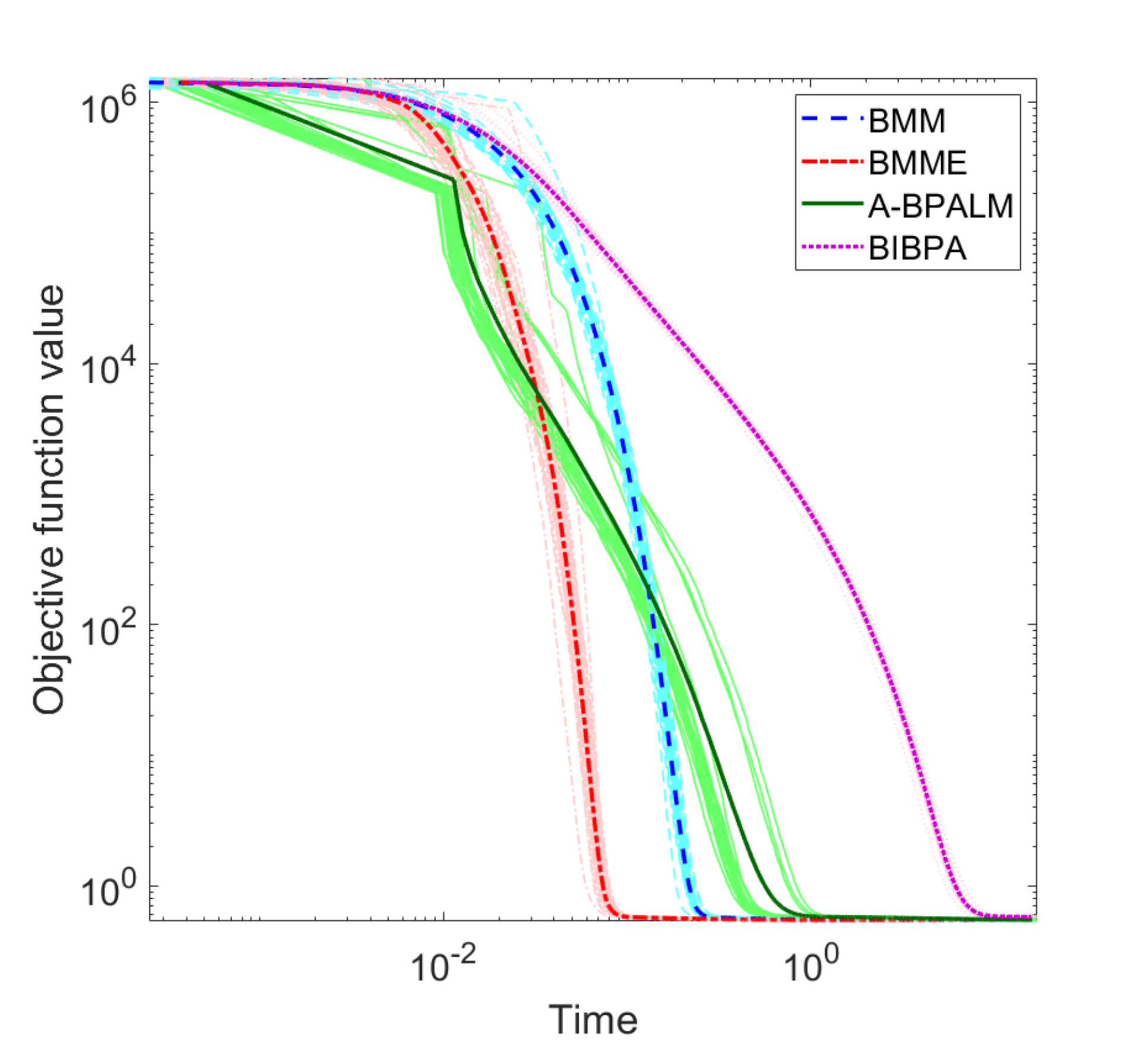} 
     \includegraphics[scale=0.38]{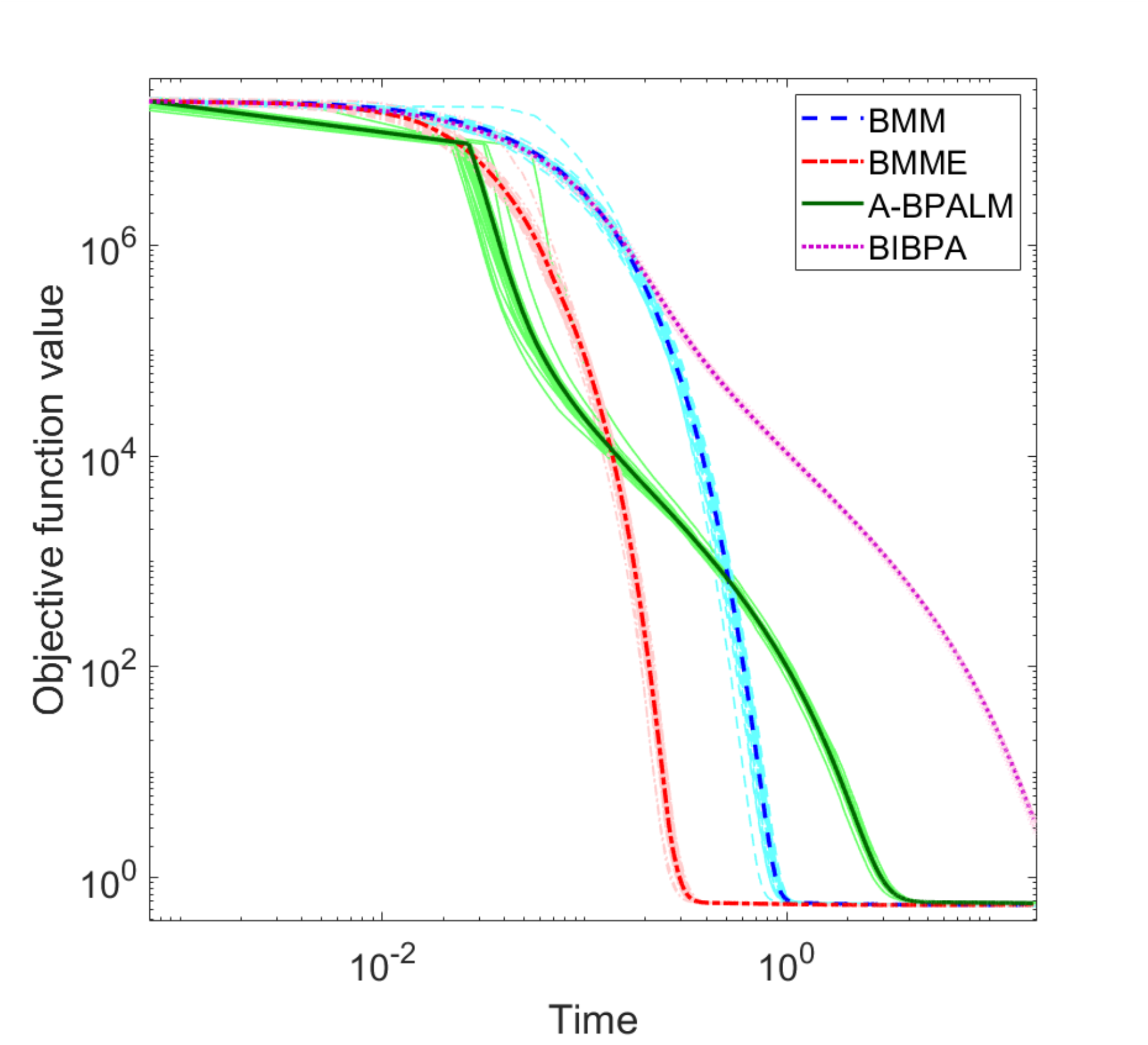} 
    \caption{Evolution of the objective functions with respect to the running time for 30 synthetic data sets with $(\mbfm,\mbfn,\mbfr)=(500,500,10)$ (left) and $(\mbfm,\mbfn,\mbfr)=(500,2000,10)$ (right), in a log-log scale. The average curve is plotted in bold.
    \label{fig:synthetic}}
    
\end{figure}
We observe that BMME consistently outperforms the other algorithms in  term of convergence speed, followed by BMM and A-BPALM. Note that the results are very consistent among various runs on different input matrices.   

These experiments illustrate two facts: 
\begin{enumerate}
    \item Using extrapolation in BMME is useful and accelerates the convergence, as BMME outperforms BMM. 
    
    \item  The flexibility of Definition~\ref{def:block_relative_smooth} allows BMME to choose the kernel functions in ~\eqref{eq:h1h2} that also leads to a significant speedup as BMME outperforms A-BPALM and BIBPA. 
    This illustrates our arguments in the paragraph ``Flexibility of Definition~\ref{def:block_relative_smooth}'' at the end of Section~\ref{sec:Bregman} (page~\pageref{paragraph:flex}). 
\end{enumerate}

In the next sections, we perform numerical experiments on real data sets to further validate these two key observations.

\subsubsection{Facial images}  \label{sec:facim}

In the second experiment, we compare the algorithms on four facial image data sets widely used in the NMF community: CBCL\footnote{\url{http://cbcl.mit.edu/software-datasets/heisele/facerecognition-database.html}}  
(2429 images of dimension 19 $\times$ 19), Frey\footnote{\url{https://cs.nyu.edu/~roweis/data.html}} (1965 images of dimension  $28 \times 20$), ORL\footnote{\url{ https://cam-orl.co.uk/facedatabase.html }} (400 images of dimension  $92 \times 112$),  and Umist\footnote{\url{https://cs.nyu.edu/~roweis/data.html}} (575 images of dimension  $92 \times 112$). 
We construct $X$ as an image-by-pixel matrix, that is, each row of $X$ is a vectorized facial image. 
As we will see, this allows ONMF to extract disjoint facial features as the rows of $V$. 
We set $\mbfr=25$ and use SPA initialization in all runs. We choose the penalty parameter $\lambda= \|X-U_0 V_0\|_F^2 / \mbfr$, where $(U_0,V_0)$ is the SPA initialization. We run each algorithm 100 seconds for each data set. 
The evolution of the scaled objective function values, which equal the objective function values divided by $\|X\|_F^2$, with respect to time is reported in Figure~\ref{fig:image}. 
\begin{figure}
   \centering
   \includegraphics[scale=0.4]{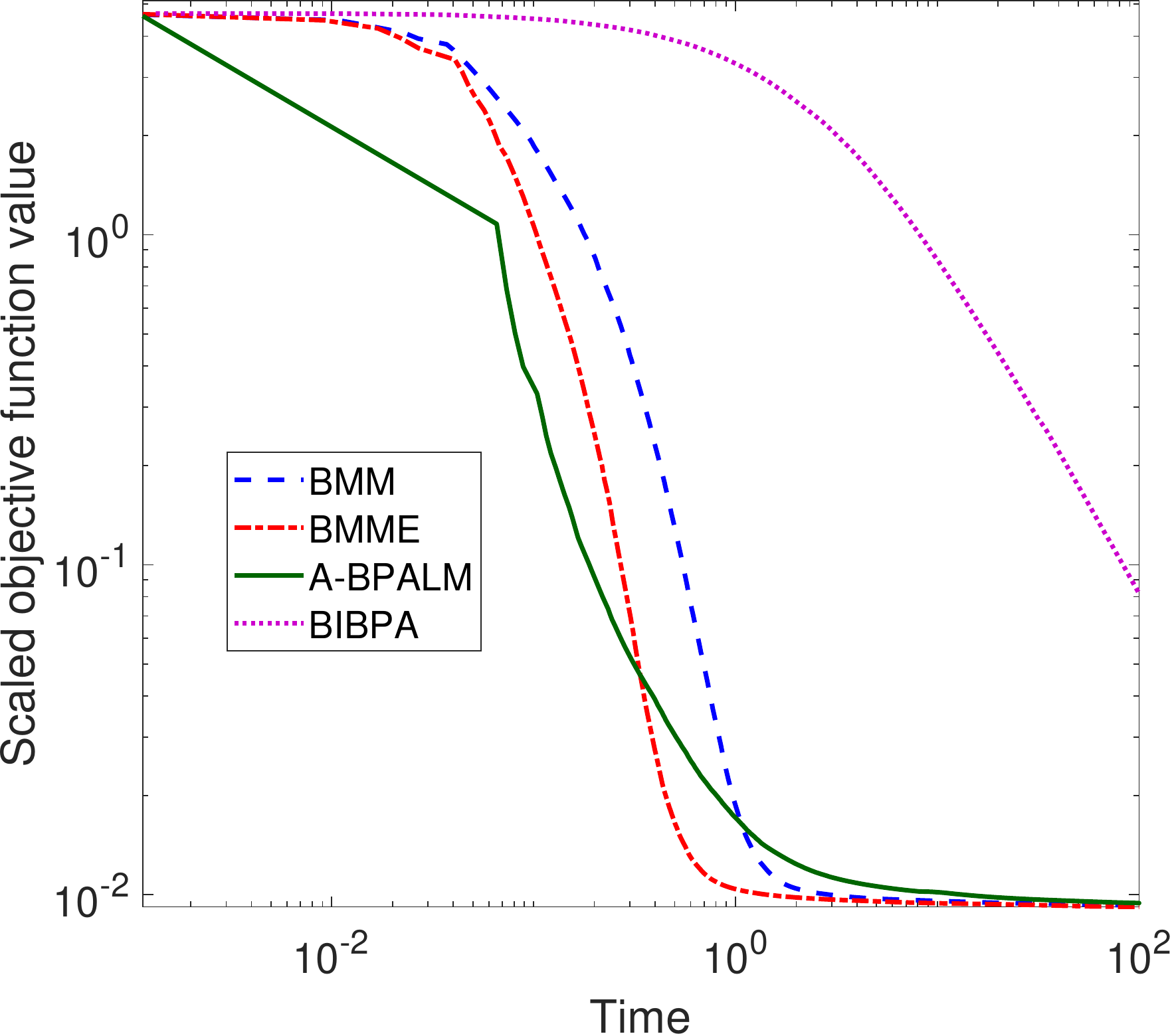}  \includegraphics[scale=0.4]{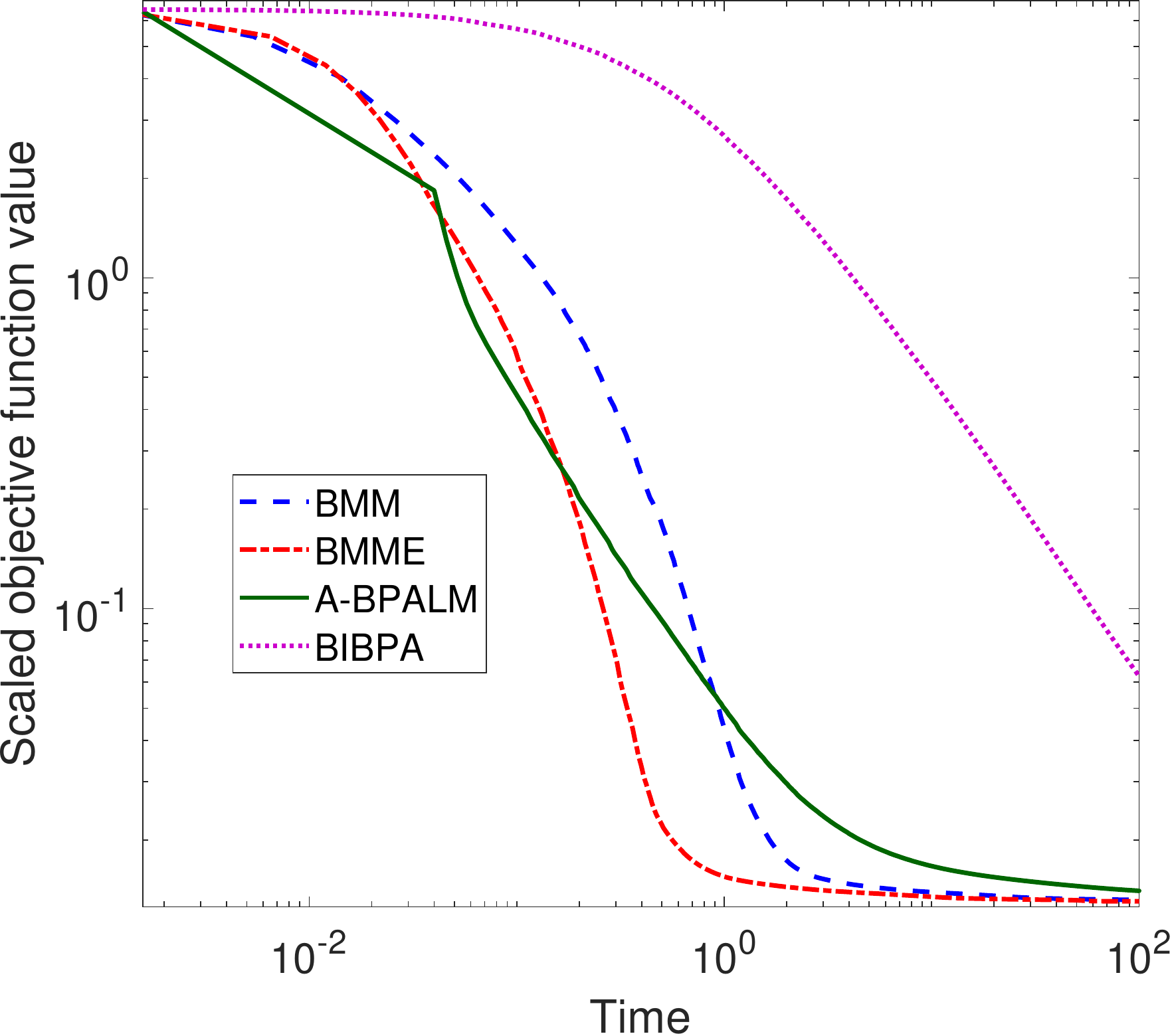}
    \includegraphics[scale=0.4]{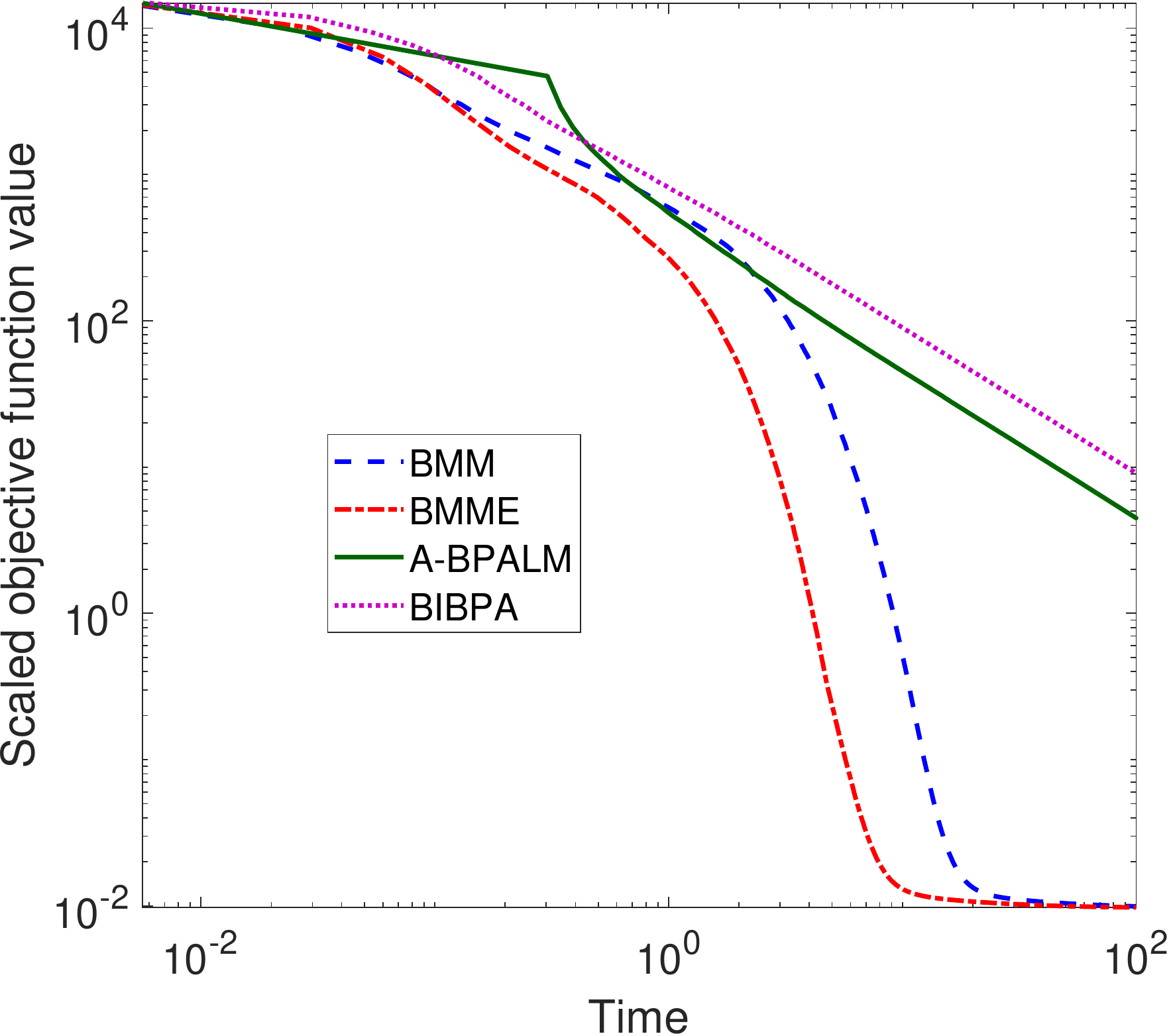}  \includegraphics[scale=0.4]{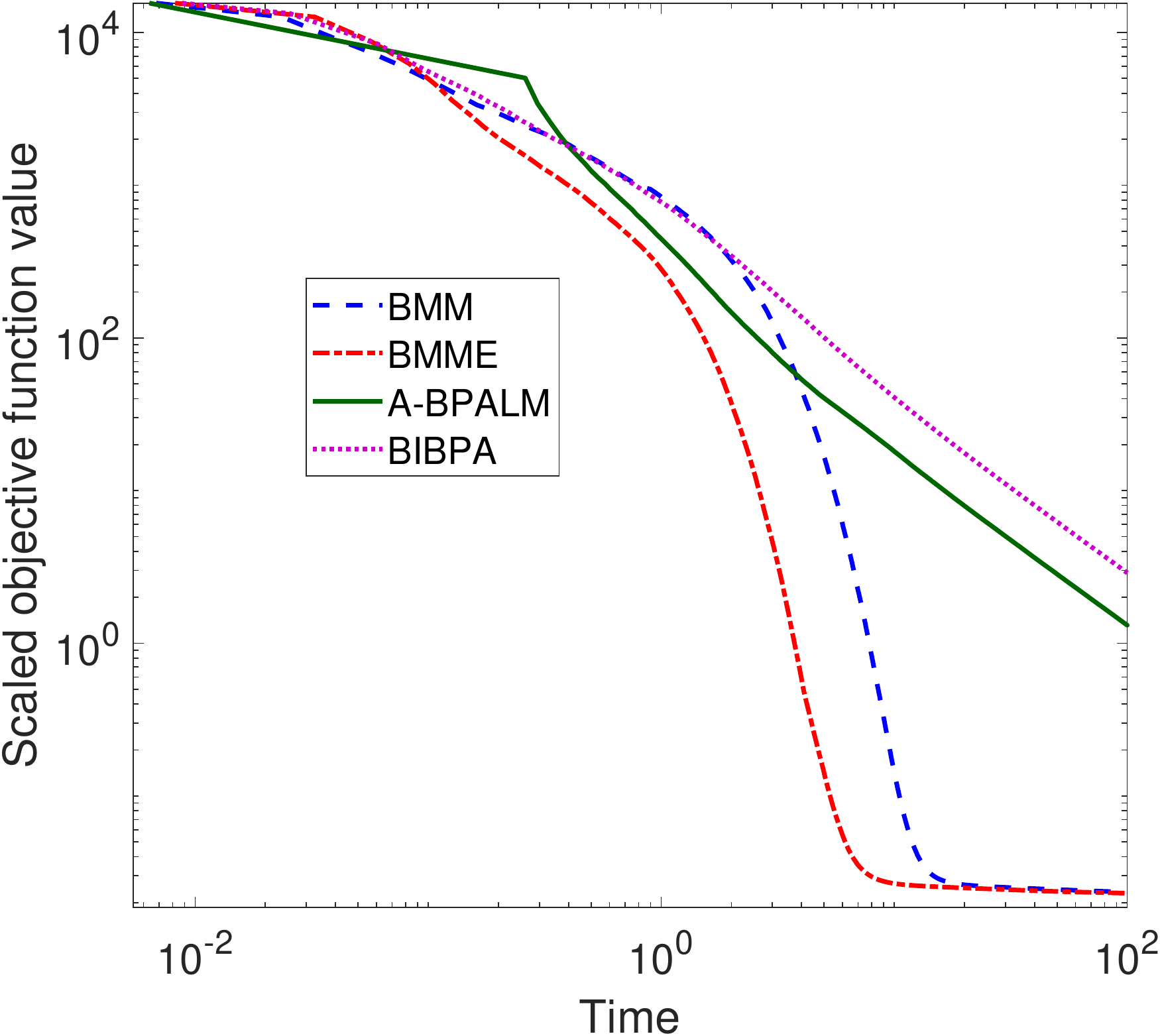}
        \caption{Evolution of the scaled objective function values with respect to running time  on the CBCL (top left), Frey (top right), Umist (bottom left) and ORL (bottom right) data sets, in loglog scale. 
        \label{fig:image}
        }
\end{figure}

We observe a similar behavior as 
for the synthetic data sets: 
 BMME converges the fastest, followed by BMM. 
Figures~\ref{fig:cbclim} and~\ref{fig:ORL} display the reshaped rows of  $V$, corresponding to facial features, obtained by the different algorithms for the CBCL and ORL data sets, respectively.  
For the other data sets, Frey and Umist; 
see \Cref{appendixfaces}.  

\begin{figure}
   \centering
  \begin{tabular}{cc}
      \includegraphics[width=0.35\textwidth]{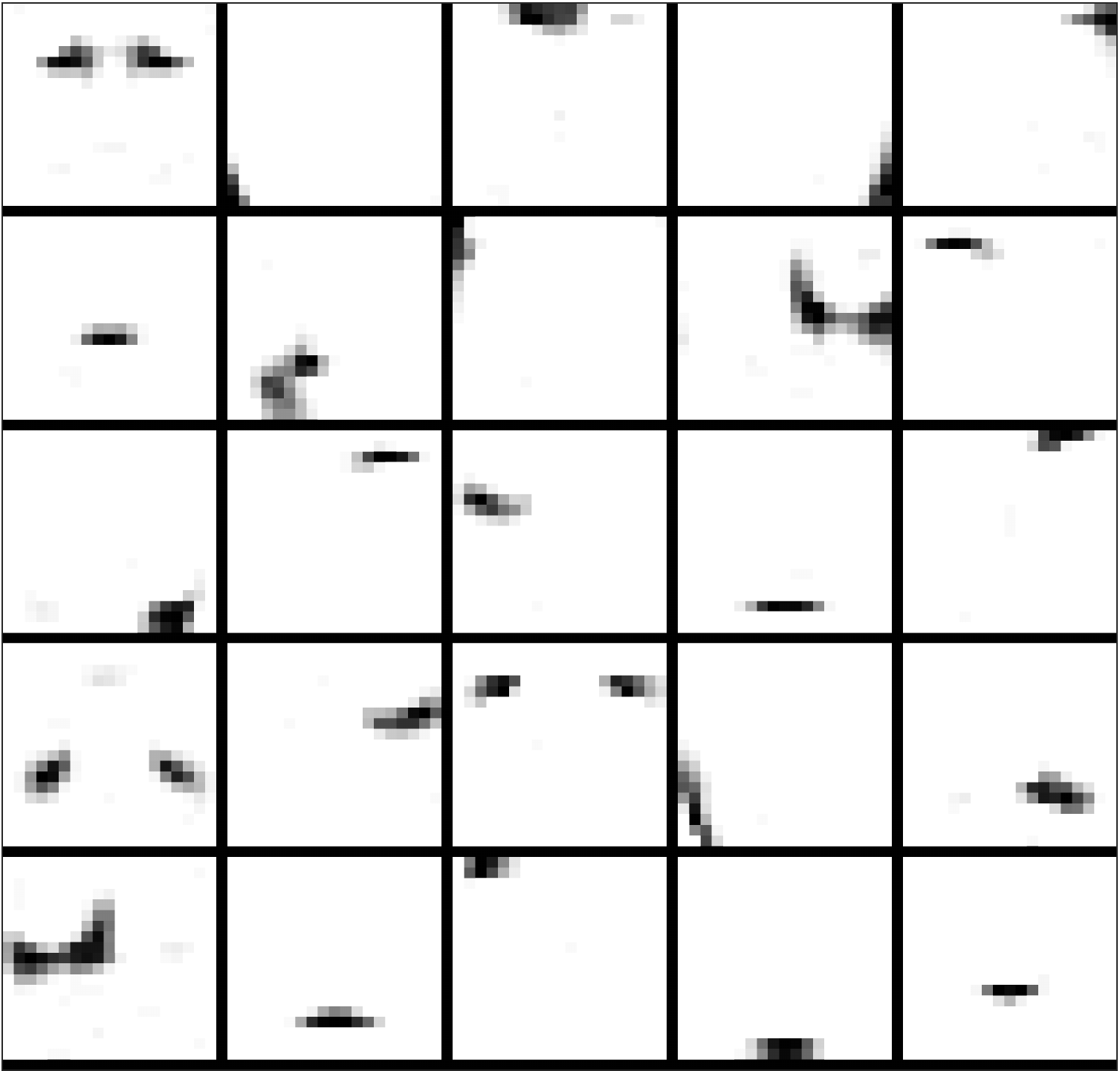}  & \includegraphics[width=0.35\textwidth]{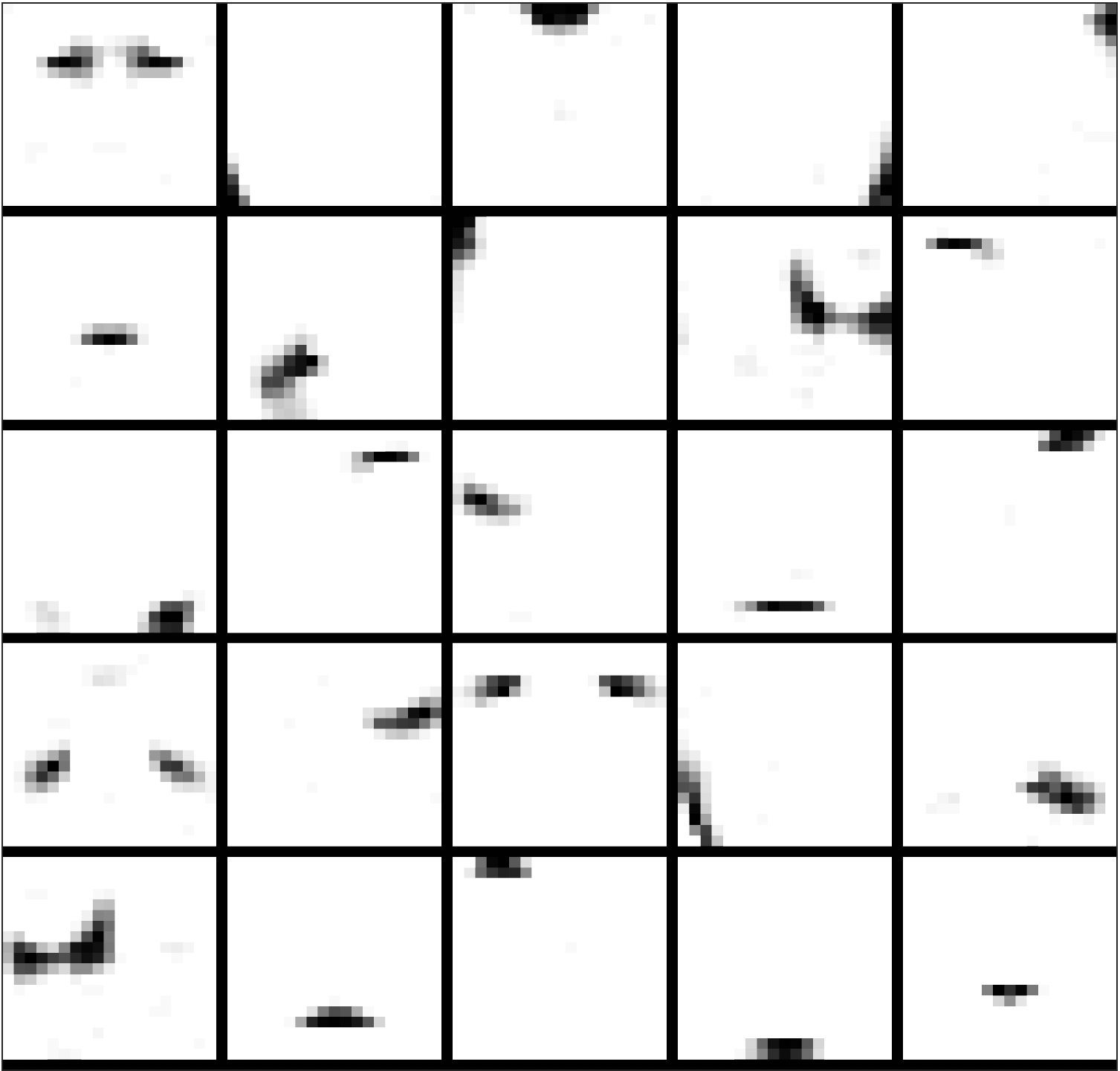}    \\
      BMM & BMME  \vspace{0.2cm} \\
          \includegraphics[width=0.35\textwidth]{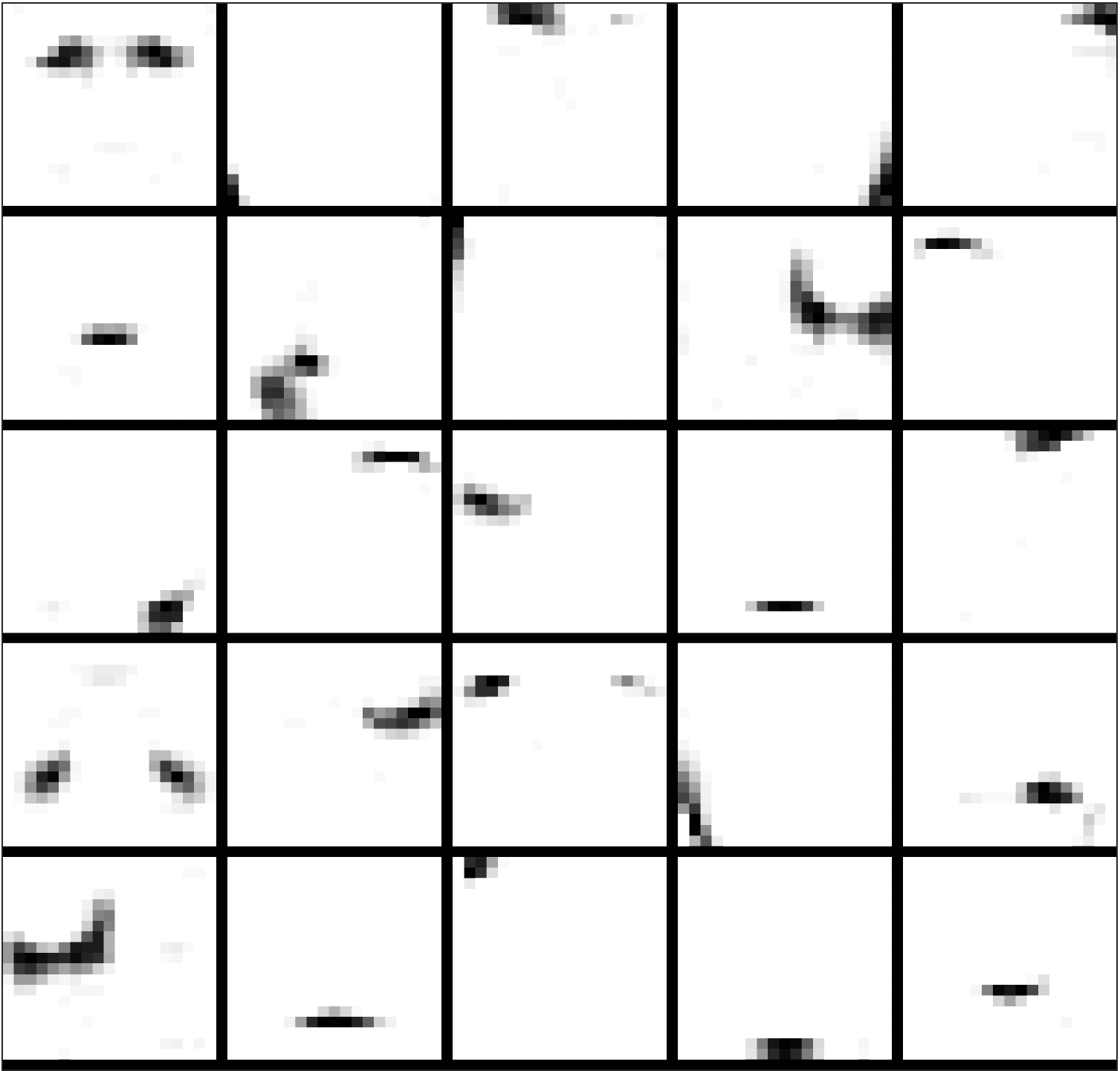}  & \includegraphics[width=0.35\textwidth]{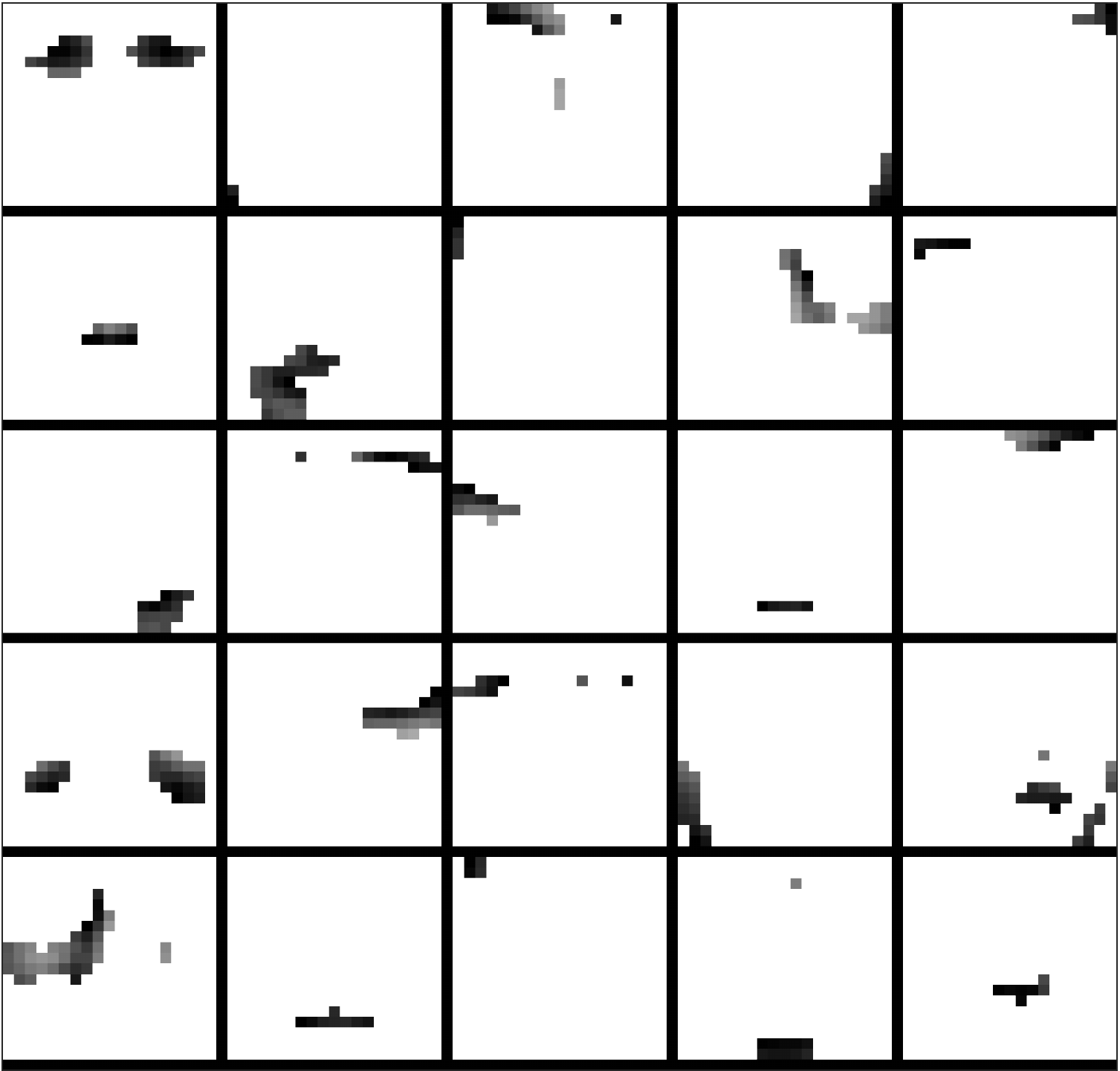} \\ 
          A-BPALM & BIBPA 
  \end{tabular}
        \caption{Display of the rows of the matrix $V$ as facial features, computed by BMM, BMME, A-BPALM, and BIBPA on the CBCL facial images.} 
    \label{fig:cbclim}
\end{figure}
In Figure~\ref{fig:cbclim}, we observe that the solutions obtained by the 4 algorithms are similar. 
Because BIBPA did not have time to converge (see Figure~\ref{fig:image}), it generates slightly worse facial features, with some isolated pixels, and edges of the facial features being sharper.

\begin{figure}
   \centering
   \begin{tabular}{cc}
       \includegraphics[width=0.4\textwidth]{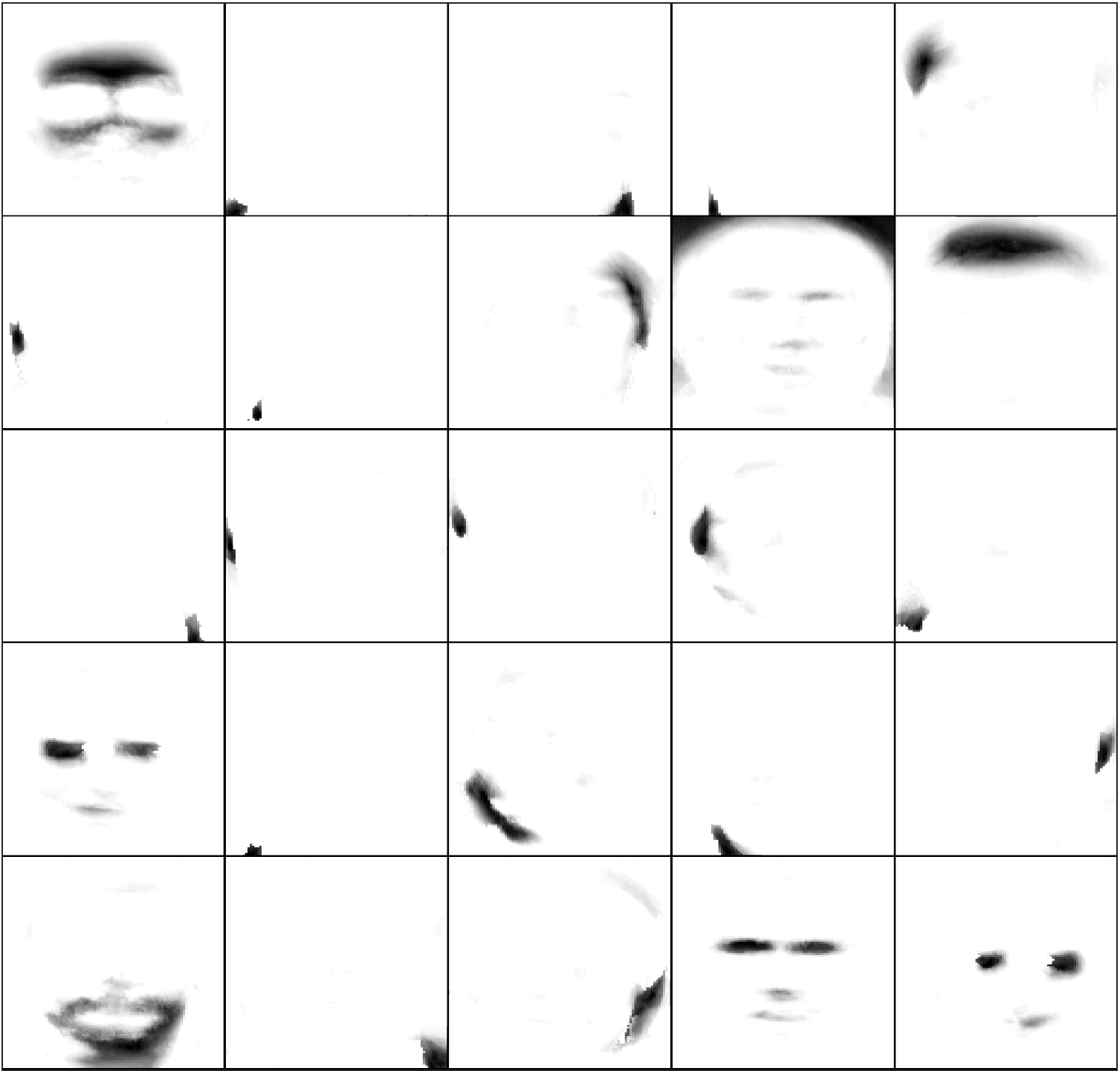}  & \includegraphics[width=0.4\textwidth]{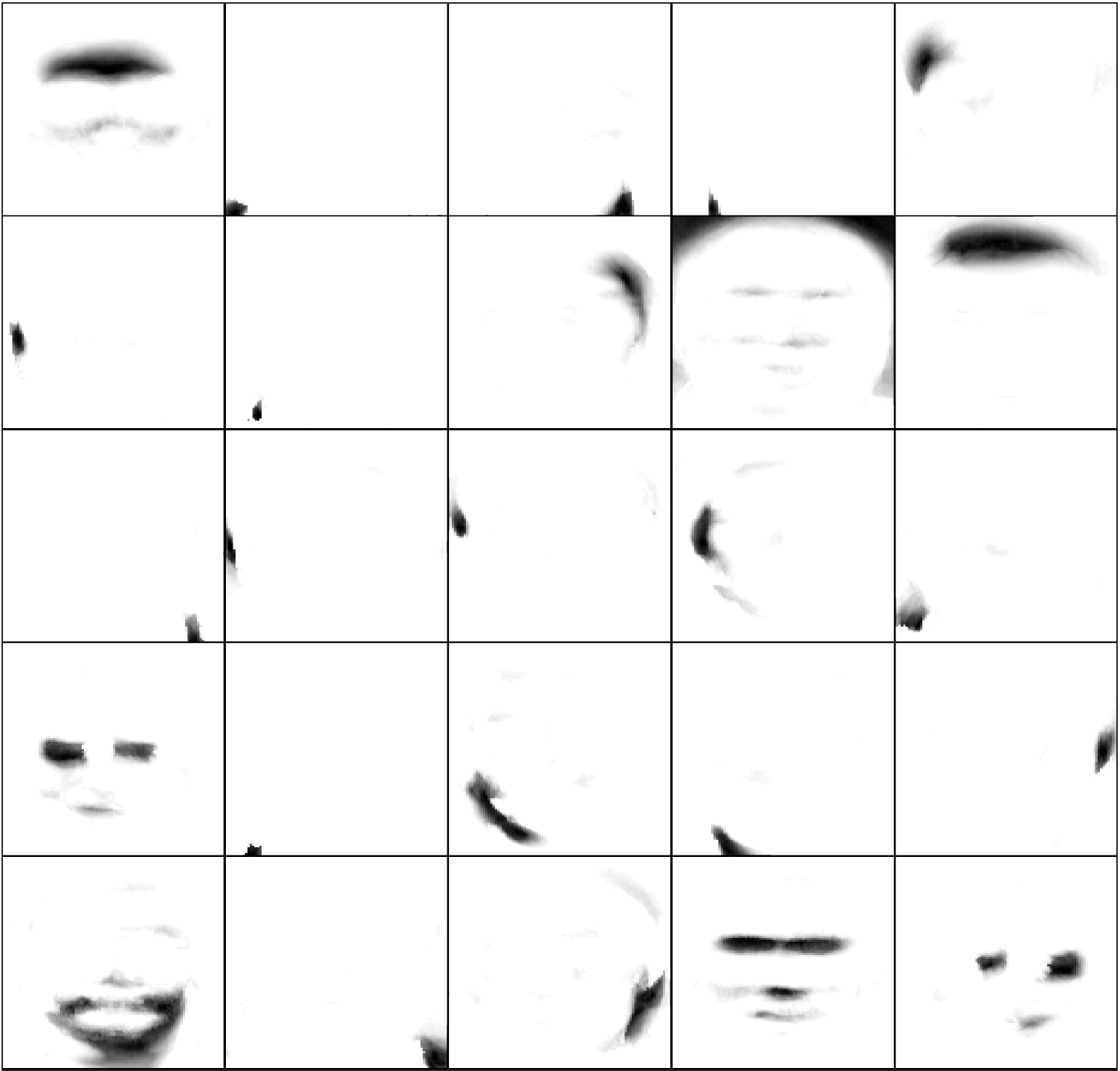}    \\
       BMM & BMME  \vspace{0.2cm} \\
           \includegraphics[width=0.4\textwidth]{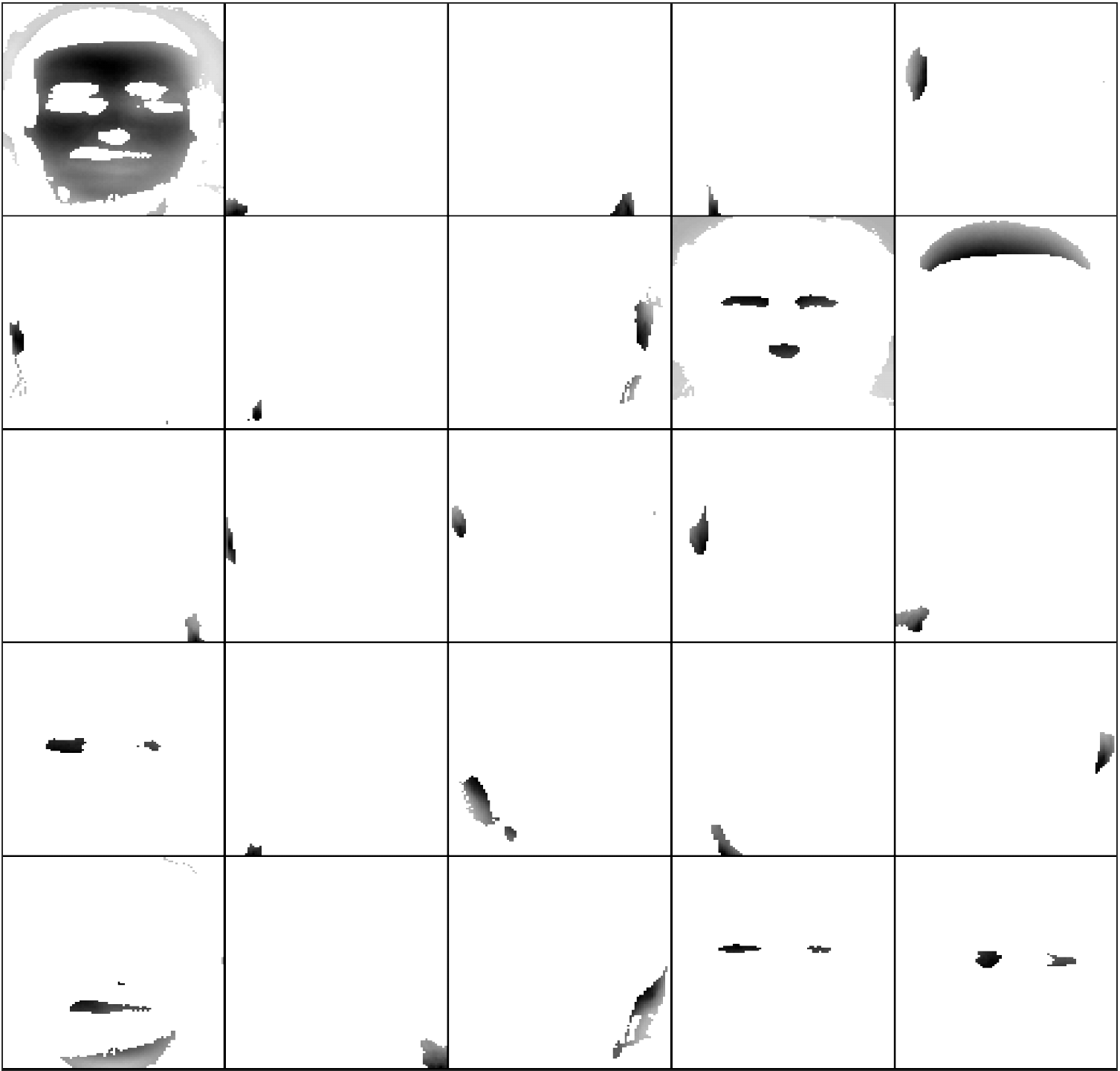}  & \includegraphics[width=0.4\textwidth]{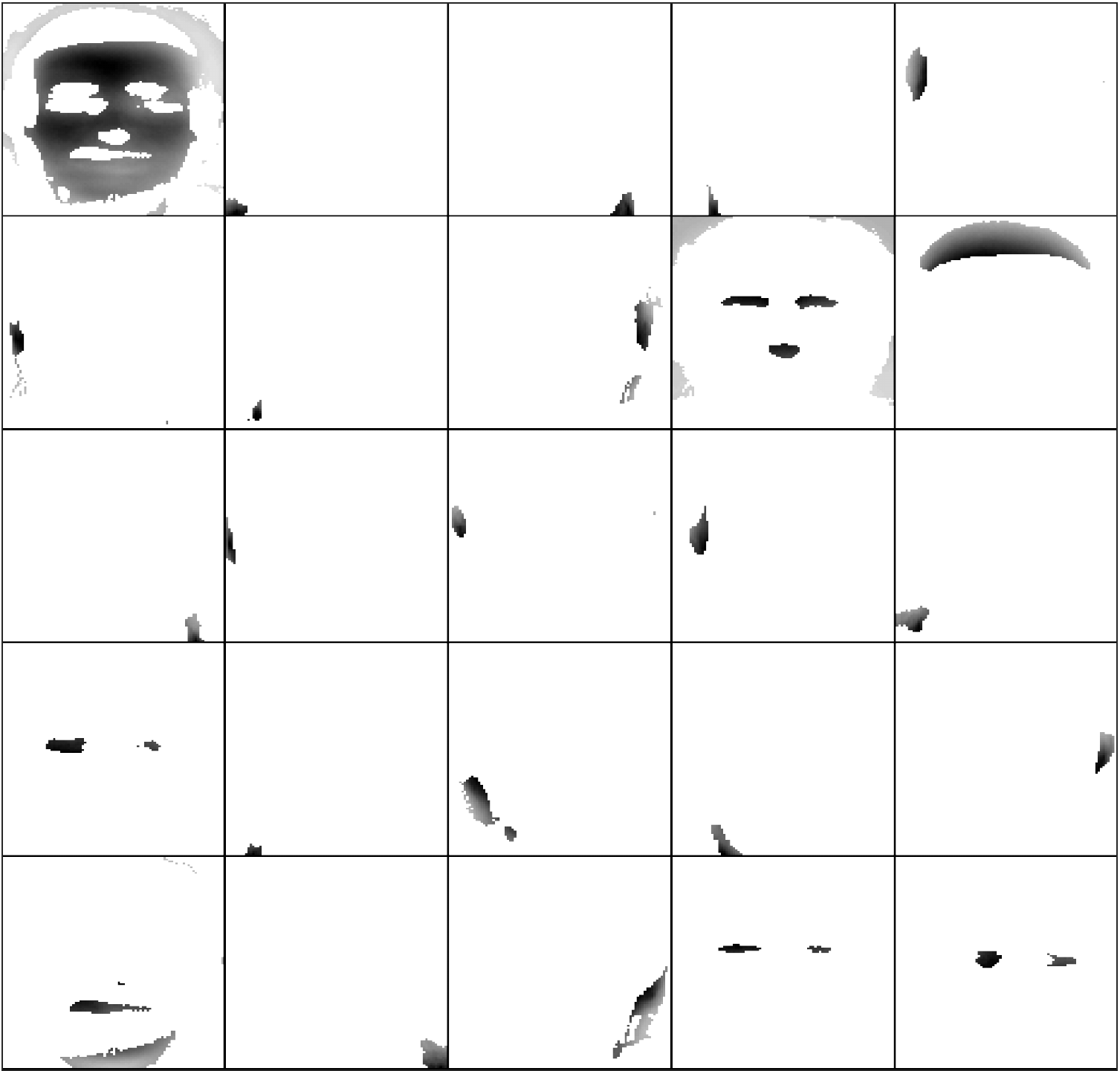} \\ 
           A-BPALM & BIBPA 
   \end{tabular}
        \caption{Display of the rows of the matrix $V$ as facial features, computed by BMM, BMME, A-BPALM, and BIBPA on the ORL facial images.}
    \label{fig:ORL}
\end{figure}
In Figure~\ref{fig:ORL}, as BMM and BMME both converged to similar objective function values (see Figure~\ref{fig:image}), they provide very similar facial features; although slightly different. For example, the first facial feature of BMME is sparser than that of BMM. 
A-BPALM and BIBPA were not able to converge within the 100 seconds, and hence provide worse facial features. For example, the first facial feature is much denser than for BMM and BMME, overlapping with other facial features (meaning that the orthogonality constraints is not well satisfied). (A very similar observation holds for the Umist data set; see \Cref{appendixfaces}.)

\subsubsection{Document data sets}   \label{sec:docu}

In the fourth experiment, we compare the algorithms on 12 sparse document 
data sets from~\cite{ZG05}, as in~\cite{pompili2014two}.  
For such data sets, SPA does not provide a good initialization, because of outliers and gross corruptions. Hence we initialize  $U_0$ with the procedure provided by H2NMF from~\cite{gillis2015hierarchical}, while $V_0$ is initialized by minimizing 
$\|X- U_0V_0\|_F^2$ while imposing $V_0$ to have a single nonzero entry per column.
The penalty parameter $\lambda$ is chosen as before, namely  $\lambda = \|X- U_0V_0\|_F^2/\mbfr$. 
We run each algorithm 200 seconds for each data set. 
Table~\ref{tab:document} reports the clustering accuracy obtained by the algorithms, which is defined as follows. Given the true clusters, $C_k$ for $k=1,2,\dots,\mbfr$, and the clusters computed by an algorithm, $C'_k$ for $k=1,2,\dots,\mbfr$ (in ONMF, a data point is assigned to the cluster corresponding to the largest entry in the corresponding column of $V$), the accuracy of the algorithm is defined as 
\[
\text{Accuracy} \quad =  \max_{\pi, \text{ a permutation of $[\mbfr]$} } 
\frac{1}{ \mbfn }
\left( 
\sum_{k=1}^{\mbfr} \left| C_k \cap C'_{\pi(k)}\right|
\right). 
\] 
\begin{center}  
 \begin{table}[h!] 
 \caption{Accuracy in percent obtained by the different algorithms on 12 document data sets. 
 The best accuracy is highlighted in bold.  \label{tab:document}}
 \begin{center}  
 \begin{tabular}{|c|c|c|c|c|c|} 
 \hline Data set & rank $\mbfr$& BMM & BMME & A-BPALM & BIBPA \\ 
 \hline 
hitech&  6 &  \textbf{39.94}  &  {39.93}  &  38.98  &  37.07 \\ 
  reviews&  5  &  \textbf{73.56}  &  73.53  &  66.70  &  66.31 \\ 
  sports&  7  &  50.09  &  \textbf{50.13}  &  42.93  &  42.93 \\ 
  ohscal&  10  & \textbf{31.70}  &  31.52  &  27.25  &  27.25 \\ 
  la1&  6  &  49.86  &  \textbf{53.37} &  41.32  &  41.32 \\ 
  la2&  6  &  53.43  &  52.46  &  \textbf{54.83}  &  50.34 \\ 
  classic&  4  &  60.74  &  \textbf{61.43}  &  50.70  &  50.10 \\ 
  k1b&  6  &  \textbf{79.19}  &  \textbf{79.19}  & 71.41  &  71.41 \\ 
  tr11&  9 & 37.44  &  37.44  &  37.44  &  \textbf{41.30} \\ 
  tr23&  6  &  \textbf{41.67}  & \textbf{ 41.67}  &  \textbf{41.67}  &  40.20 
  \\ tr41&  10  &  \textbf{38.61}  &  \textbf{38.61}  &  \textbf{38.61}  &  35.08 \\ 
  tr45&  10  & 35.51  &  35.51  &  35.51  & \textbf{37.82} \\ \hline 
     average & & 49.31   & \textbf{49.57}  &  45.61 &   45.09 \\
  \hline 
  \end{tabular} 
  \end{center} 
 \end{table} 
 \end{center} 
 
 We observe on Table~\ref{tab:document} that BMM and BMME provide, on average, better clustering accuracies than A-BPALM and BIBPA. 
 In fact, in terms of accuracy, BMM performs similarly as BMME as both algorithms were able to converge within the allotted time (see Figure~\ref{fig:document} and \Cref{appendix_document}). 
 When A-BPALM or BIBPA have a better clustering accuracy, it is only by a small margin (less than 4\% in all cases), while 
 BMM and/or BMME sometimes outperform  A-BPALM and BIBPA; in particular, by 
  6.8\% for reviews, 
 7.2\% for sports, 
  12\% for la1, 
  10.7\% for classic, and 
  7.8\% for k1b.

The scaled objective function values with respect to time for the hitech and reviews data set are reported in Figure~\ref{fig:document}; the results for the other data sets are similar, and can be found in \Cref{appendix_document}. As before, BMME is the fastest, followed by BMM, A-BPALM and BIBPA (in that order). 
\begin{figure}
   \centering
   \begin{tabular}{cc}
           \includegraphics[width=0.45\textwidth]{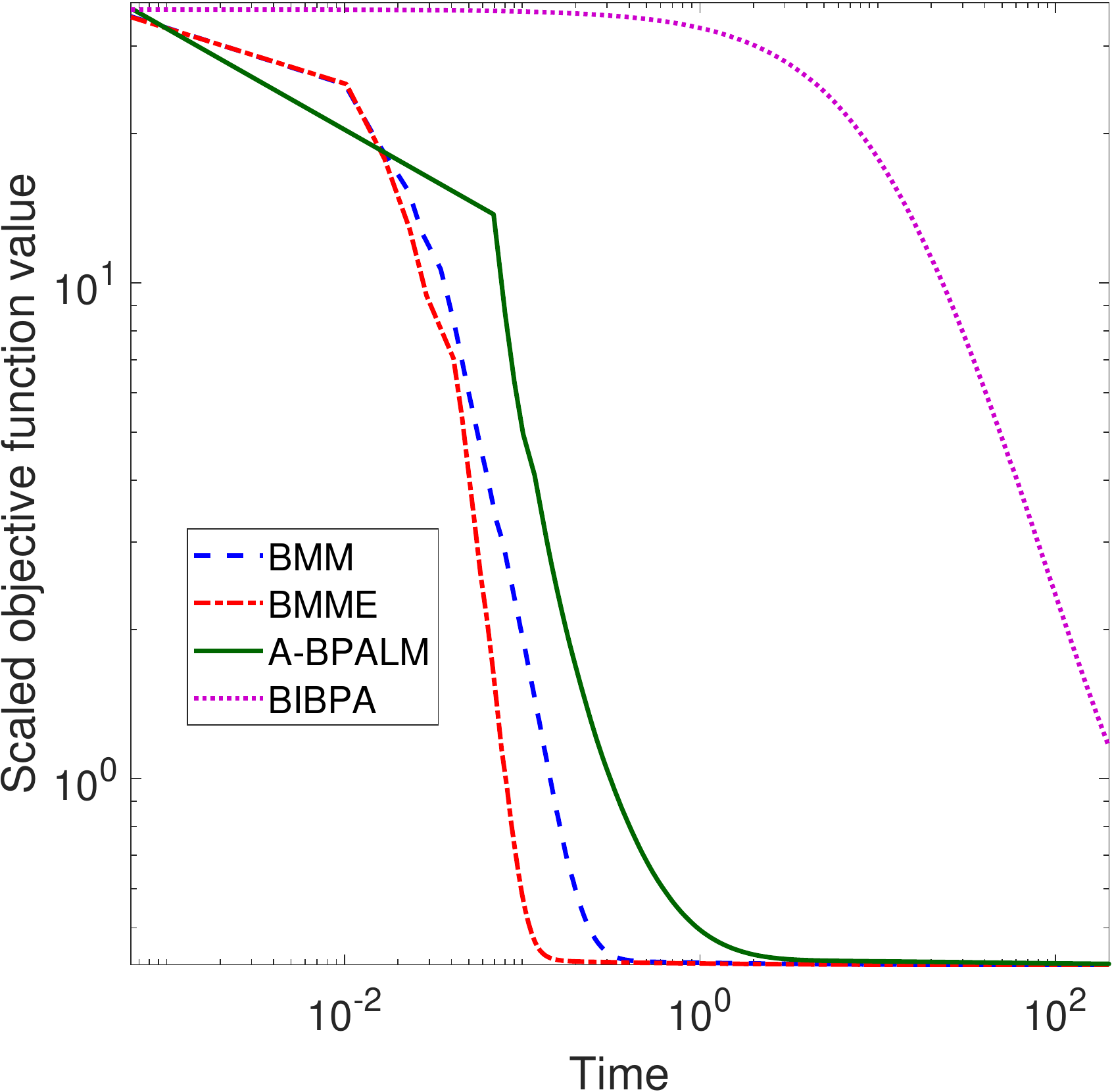}  & \includegraphics[width=0.45\textwidth]{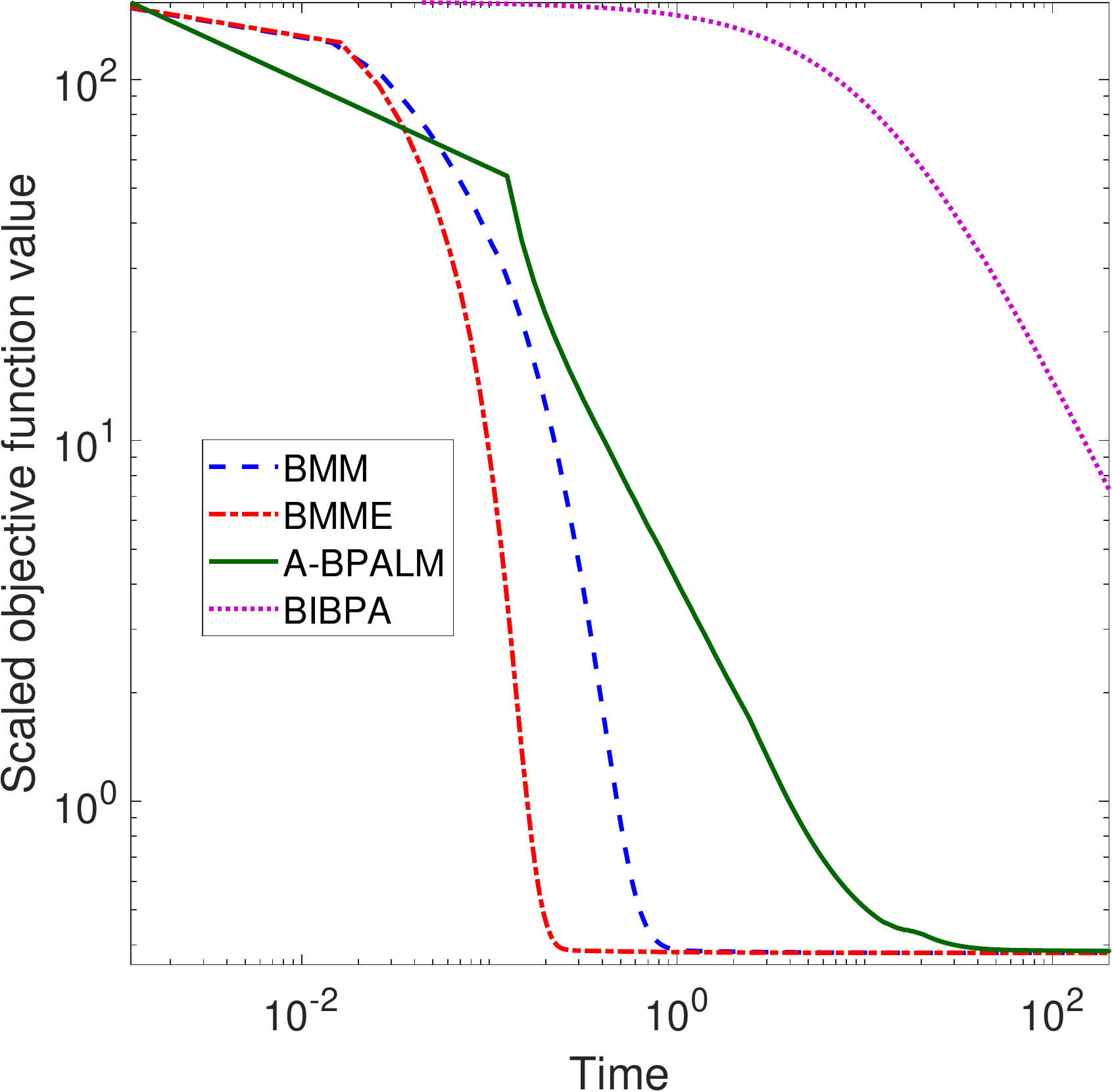} 
   \end{tabular}
           \caption{Evolution of the scaled objective function values with respect to running time  on the hitech (left) and review (right) data sets, in loglog scale. 
        \label{fig:document}
        }
\end{figure}

\section{Conclusion}
\label{sec:conclusion}

In this paper, we have developed BMME, a block alternating Bregman Majorization Minimization framework with Extrapolation that uses the Nesterov acceleration technique, for a class of nonsmooth nonconvex optimization problems that does not require the global Lipschitz gradient continuity. 
We have proved the subsequential and global convergence of BMME 
to first-order stationary points; see Theorems~\ref{theorem:subsconvergence}  and~\ref{theorem:globalconvergence}, respectively.  
We have evaluated the performance of BMME on the penalized orthogonal nonnegative matrix factorization problem on synthetic data sets, facial images, and documents. 
The numerical results have shown that 
(1)~Using extrapolation improves the convergence of BMME, and (2)~BMME converges faster than previously 
introduced the Bregman BPG methods, A-BPALM~\cite{ahookhosh2019multi} and BIBPA~\cite{ahookhosh2020_inertial}, 
because BMME allows a much more flexible choice of the kernel functions and uses Nesterov-type extrapolation. 


\appendix

\section{Preliminaries of nonconvex nonsmooth optimization}
 \label{sec:prelnnopt}

Let $g: \bbE\to \bbR\cup \{+\infty\} $ be a proper lower semicontinuous function.  
\begin{definition}
\label{def:dd}
\begin{itemize}
\item[(i)] For each $x\in{\rm dom}\,g,$ we denote $\hat{\partial}g(x)$ as
the Frechet subdifferential of $g$ at $x$ which contains vectors
$v\in\mathbb{E}$ satisfying 
\[
\liminf_{y\ne x,y\to x}\frac{1}{\left\Vert y-x\right\Vert }\left(g(y)-g(x)-\left\langle v,y-x\right\rangle \right)\geq 0.
\]
If $x\not\in{\rm dom}\:g,$ then we set $\hat{\partial}g(x)=\emptyset.$  
\item[(ii)] The limiting-subdifferential $\partial g(x)$ of $g$ at $x\in{\rm dom}\:g$
is defined as follows. 
\[
\partial g(x) := \left\{ v\in\mathbb{E}:\exists x^{k}\to x,\,g\big(x^{k}\big)\to g(x),\,v^{k}\in\hat{\partial}g\big(x^{k}\big),\,v^{k}\to v\right\} .
\]
\end{itemize}
\end{definition}
\begin{definition}
\label{def:type2}
We call $x^{*}\in \rm{dom}\,F$ a critical point of $F$ if $0\in\partial F\left(x^{*}\right).$ 
\end{definition}
We note that if $x^{*}$ is a local minimizer of $F$ then $x^{*}$ is a critical point of $F$. 
\begin{definition}
\label{def:KL}
A function $\phi(x)$ is said to have the KL property
at $\bar{x}\in{\rm dom}\,\partial\, \phi$ if there exists $\eta\in(0,+\infty]$,
a neighborhood $U$ of $\bar{x}$ and a concave function $\xi:[0,\eta)\to\mathbb{R}_{+}$
that is continuously differentiable on $(0,\eta)$, continuous at
$0$, $\xi(0)=0$, and $\xi'(s)>0$ for all $s\in(0,\eta),$ such that for all
$x\in U\cap[\phi(\bar{x})<\phi(x)<\phi(\bar{x})+\eta],$ we have
\begin{equation}
\label{ieq:KL}
\xi'\left(\phi(x)-\phi(\bar{x})\right) \, \dist\left(0,\partial\phi(x)\right)\geq1.
\end{equation}
$\dist\left(0,\partial\phi(x)\right)=\min\left\{ \|y\|:y\in\partial\phi(x)\right\}$.
If $\phi(x)$ has the KL property at each point of ${\rm dom}\, \partial\phi$ then $\phi$ is a KL function. 
\end{definition}
Many nonconvex nonsmooth functions in practical applications belong to the class of KL functions, for examples, real analytic functions, semi-algebraic functions, and locally strongly convex functions~\cite{Bochnak1998,Bolte2014}.




\section{Facial features extracted by the ONMF algorithms on the Frey and Umist facial images} \label{appendixfaces}

Figures~\ref{fig:Frey}, and~\ref{fig:Umist} display the facial features extracted by BMM, BMME, A-BPALM and BIBPA for the Frey and Umist facial images, respectively. 
\begin{figure}[ht!]
   \centering
   \begin{tabular}{cccc}
       \includegraphics[width=0.22\textwidth]{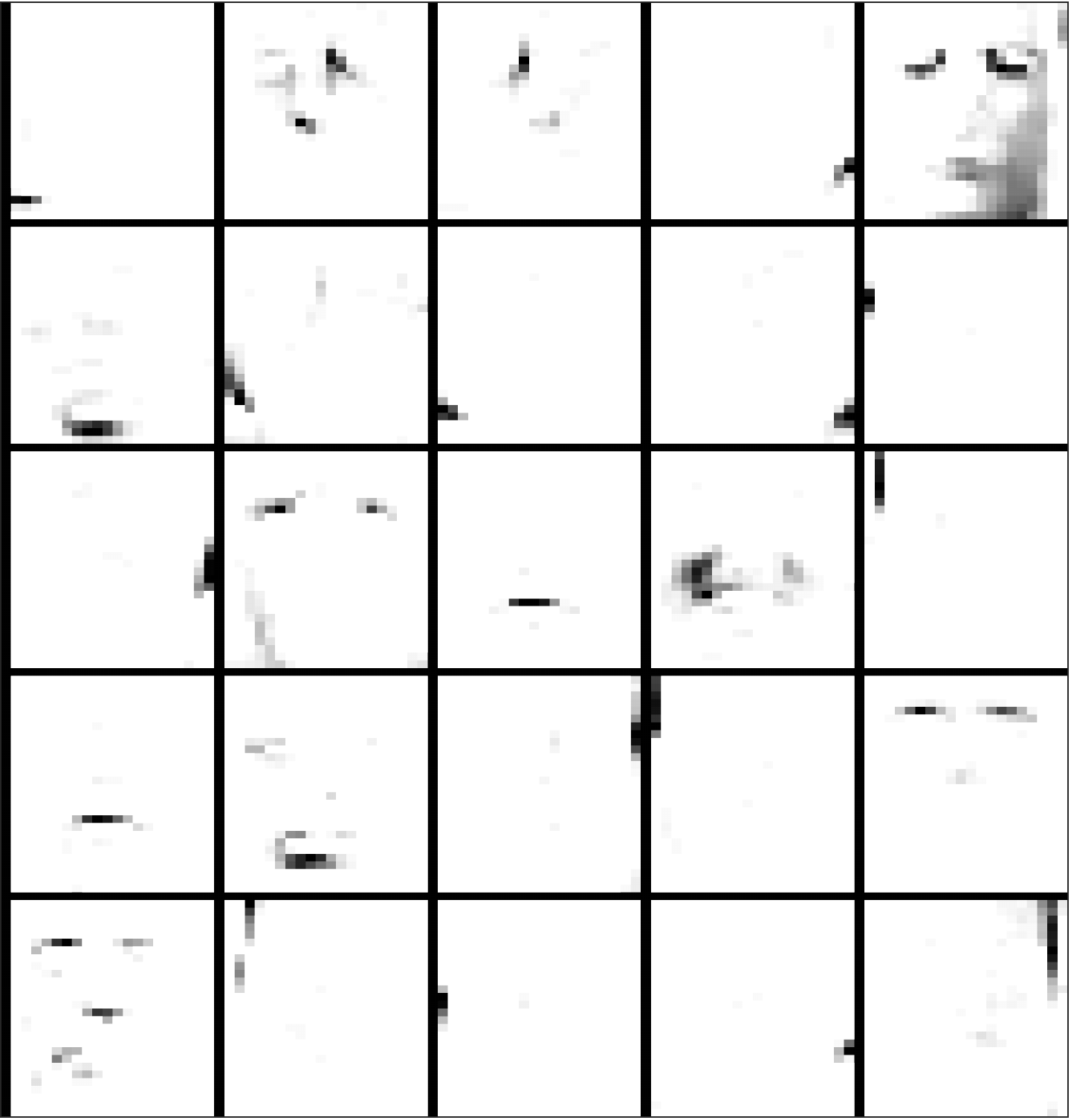}  & \includegraphics[width=0.22\textwidth]{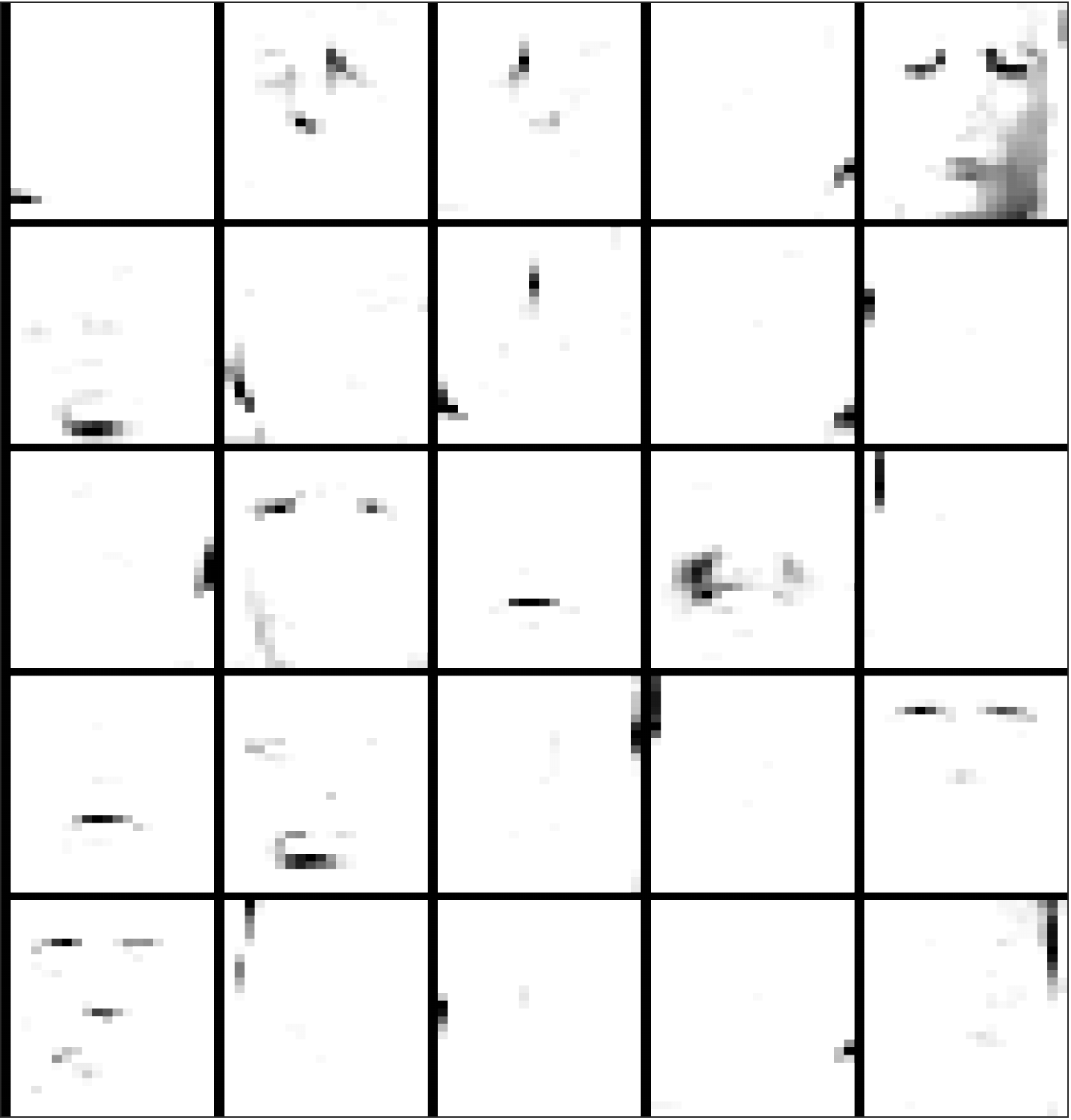}    
     & 
           \includegraphics[width=0.22\textwidth]{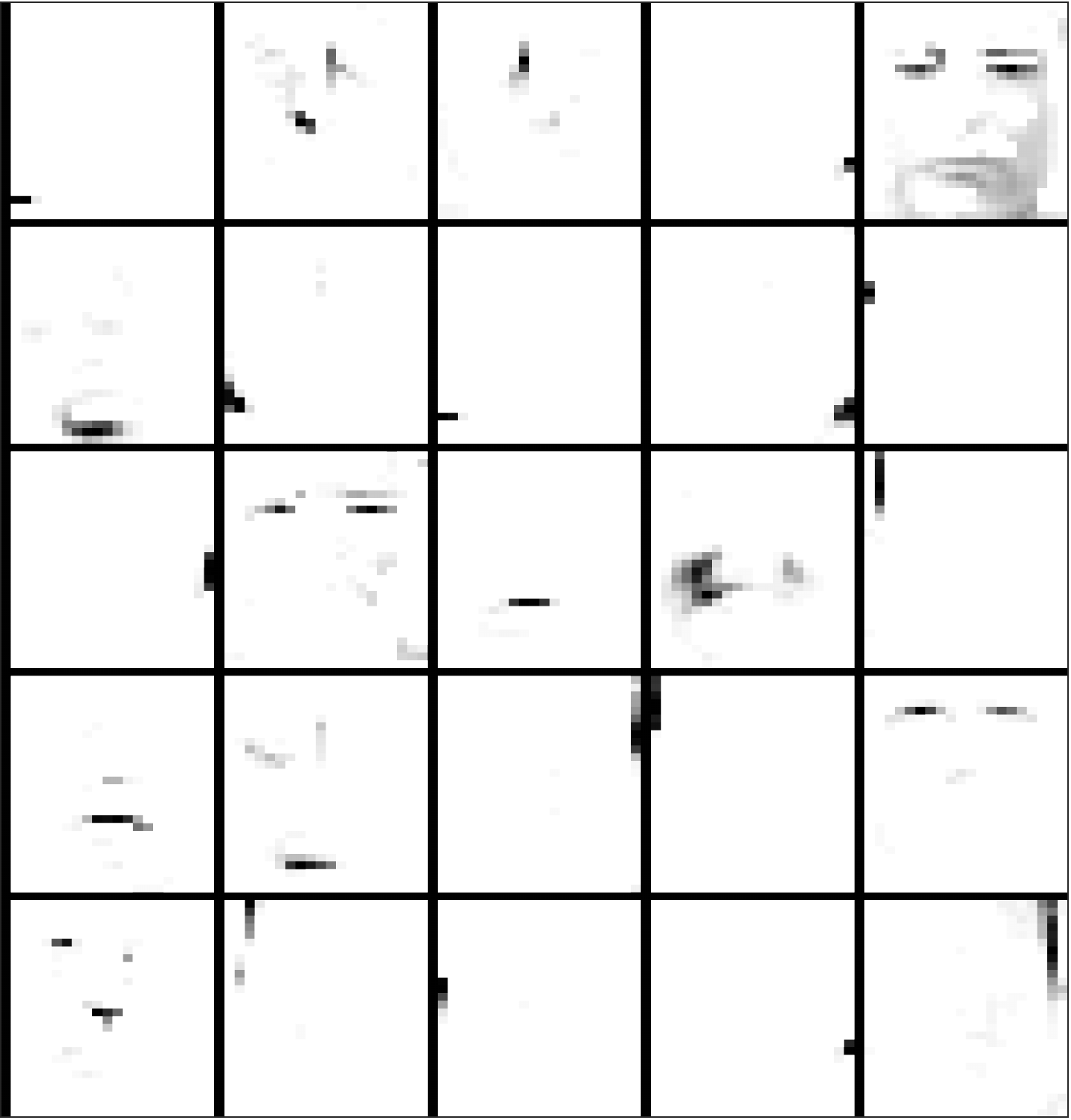}  & \includegraphics[width=0.22\textwidth]{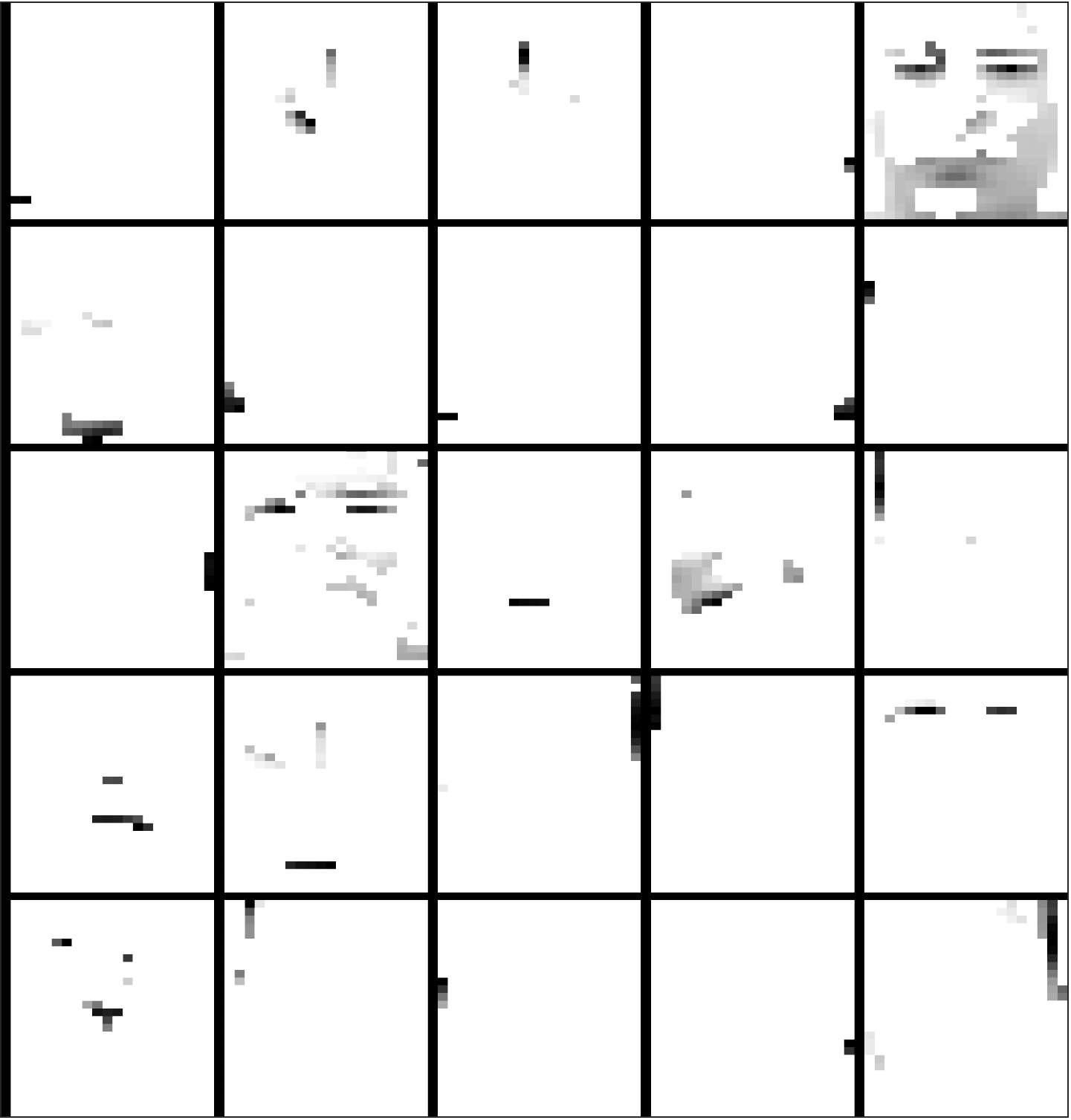} \\ 
           BMM & BMME & A-BPALM & BIBPA 
   \end{tabular}
        \caption{Display of the rows of the matrix $V$ as facial features, computed by BMM, BMME, A-BPALM, and BIBPA on the Frey facial images.}
    \label{fig:Frey}
\end{figure}
In Figure~\ref{fig:Frey}, facial features are rather similar, although BMM and BMME obtained smaller objective function values. 

\begin{figure}[ht!] 
   \centering
   \begin{tabular}{cccc}
       \includegraphics[width=0.22\textwidth]{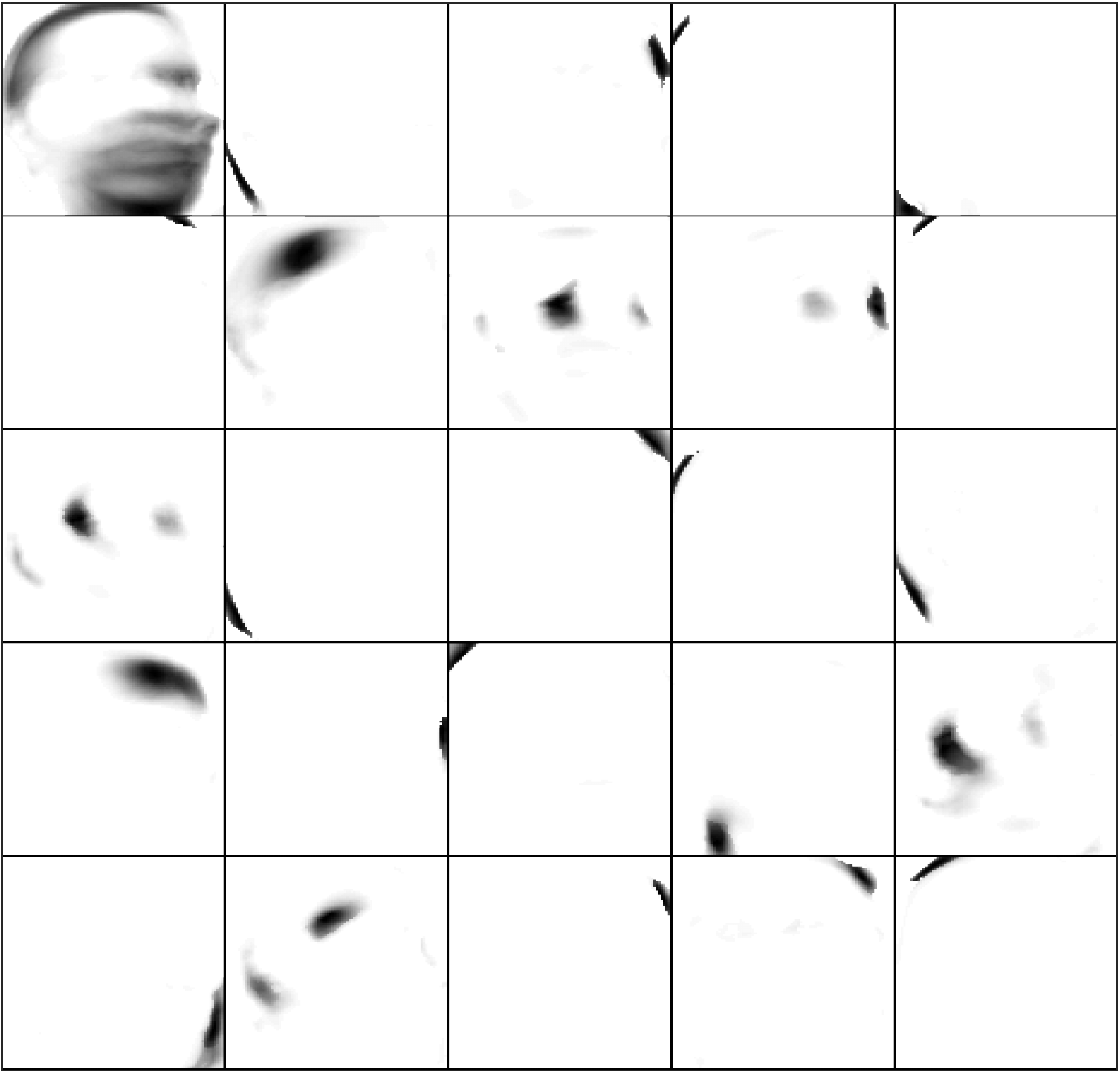} 
       & \includegraphics[width=0.22\textwidth]{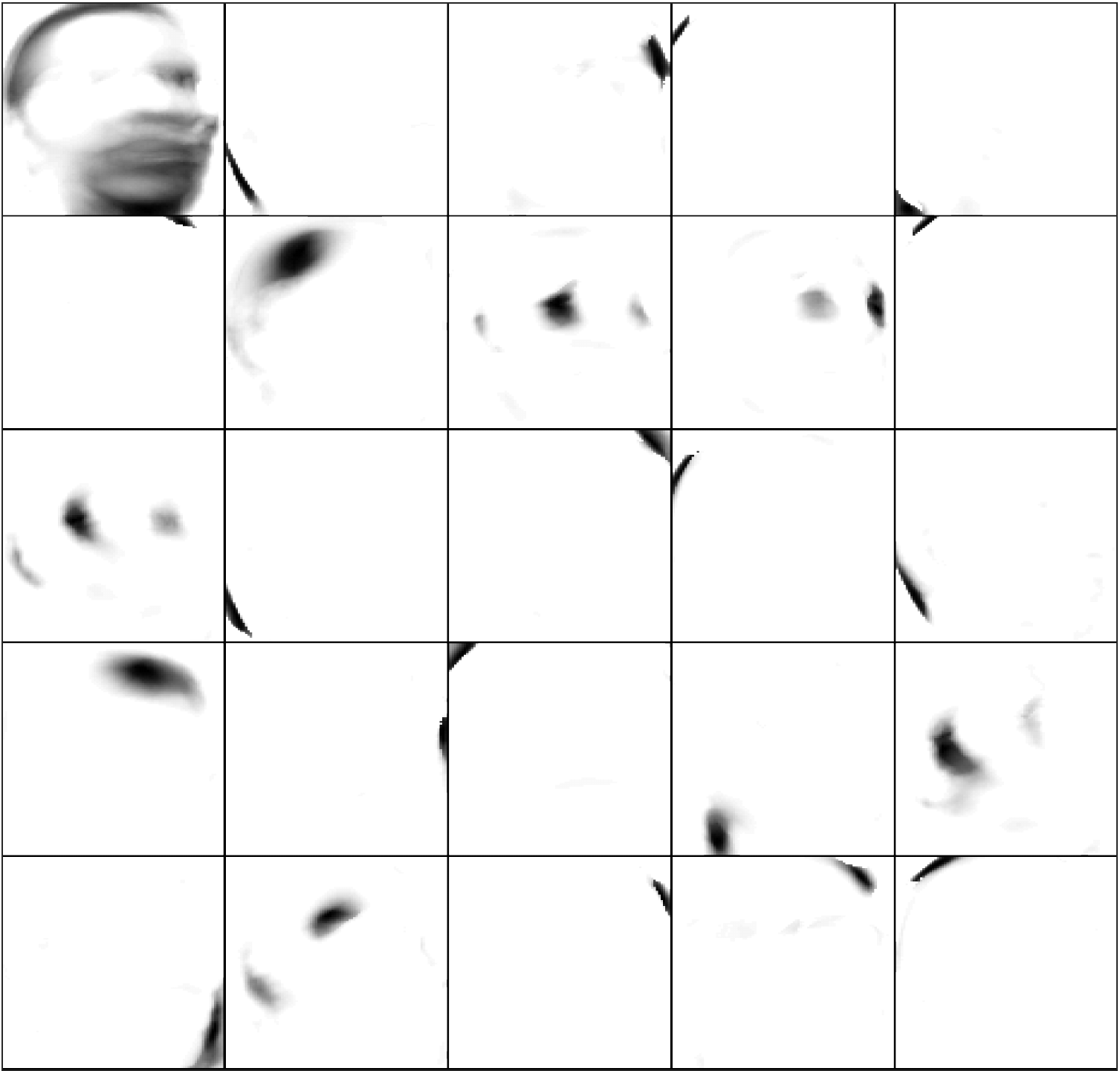}   
     &  \includegraphics[width=0.22\textwidth]{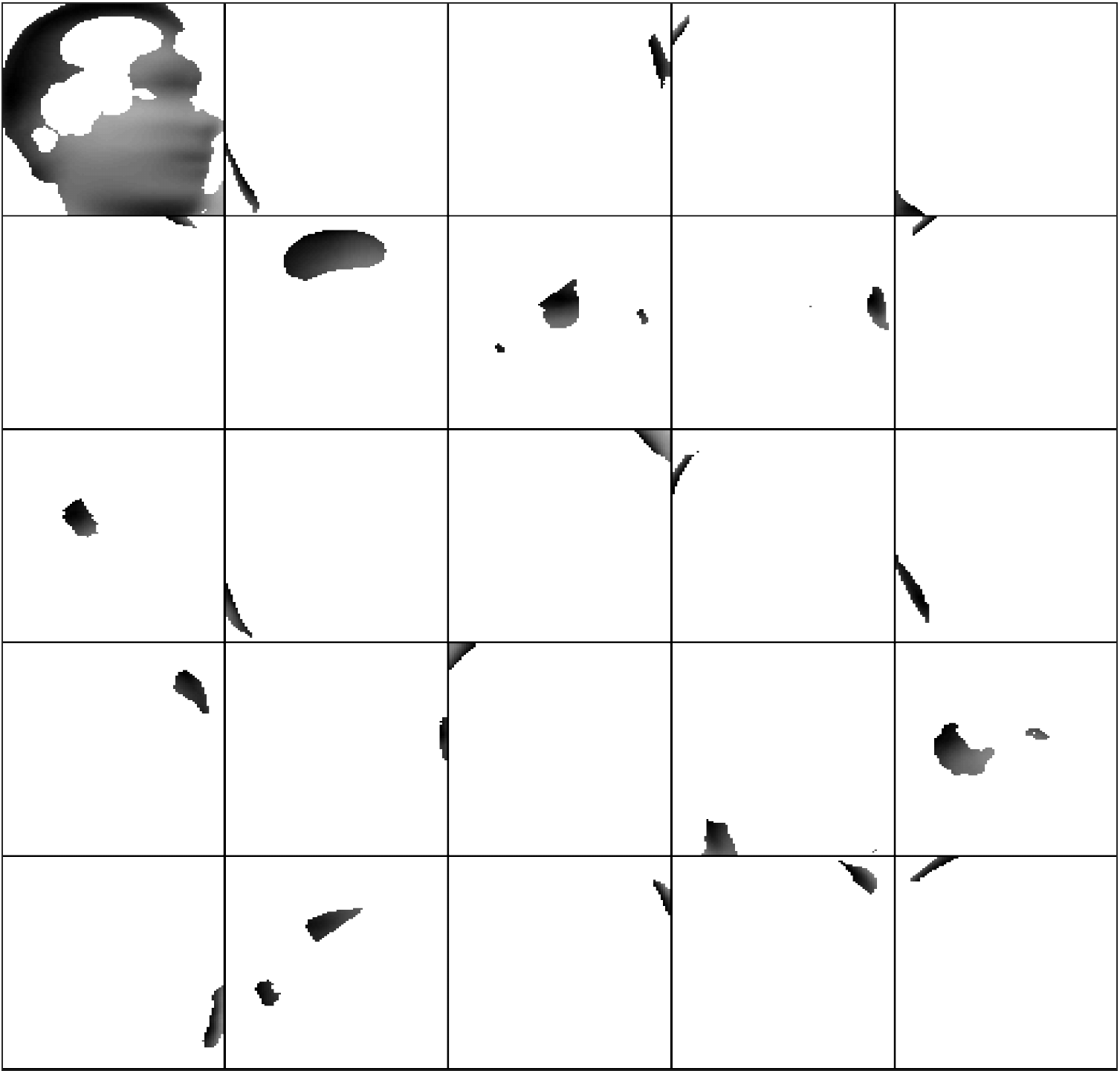}  & \includegraphics[width=0.22\textwidth]{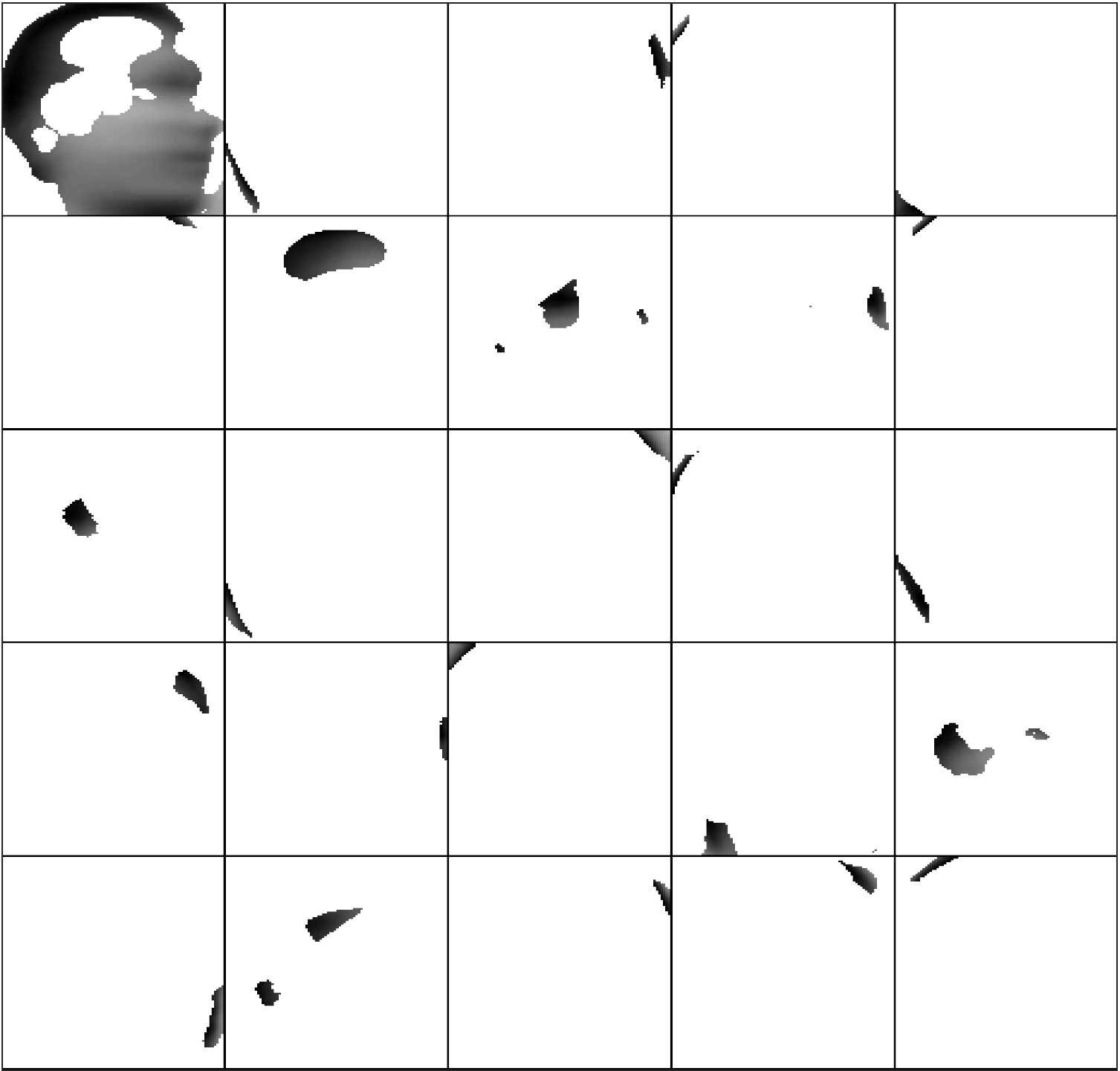} \\ 
          BMM & BMME & A-BPALM & BIBPA 
   \end{tabular}
        \caption{Display of the rows of the matrix $V$ as facial features, computed by BMM, BMME, A-BPALM, and BIBPA on the Umist facial images.}
    \label{fig:Umist}
\end{figure}
In Figure~\ref{fig:Umist}, as BMM and BMME both converged to similar objective function values (see Figure~\ref{fig:image}), they provide very similar facial features.  
A-BPALM and BIBPA were not able to converge within the 100 seconds, and hence provide worse facial features. For example, the first facial feature is much denser than for  BMM and BMME, overlapping with other facial features (meaning that the orthogonality constraints is not well satisfied).

\section{Scaled objective function values for document data sets} \label{appendix_document}

Figures~\ref{fig:document2} and~\ref{fig:document3} display the scaled objective function values for the document data sets on the penalized ONMF problem. We observe that, except for tr41 and tr45 where A-BPALM is able to compete with BMM and BMME, 
BMM and BMME outperform A-BPALM and BIBPA which performs particularly badly on these sparse data sets. 
\begin{figure}
   \centering
   \begin{tabular}{cc}
   sports & ohscal \\ 
       \includegraphics[width=0.45\textwidth]{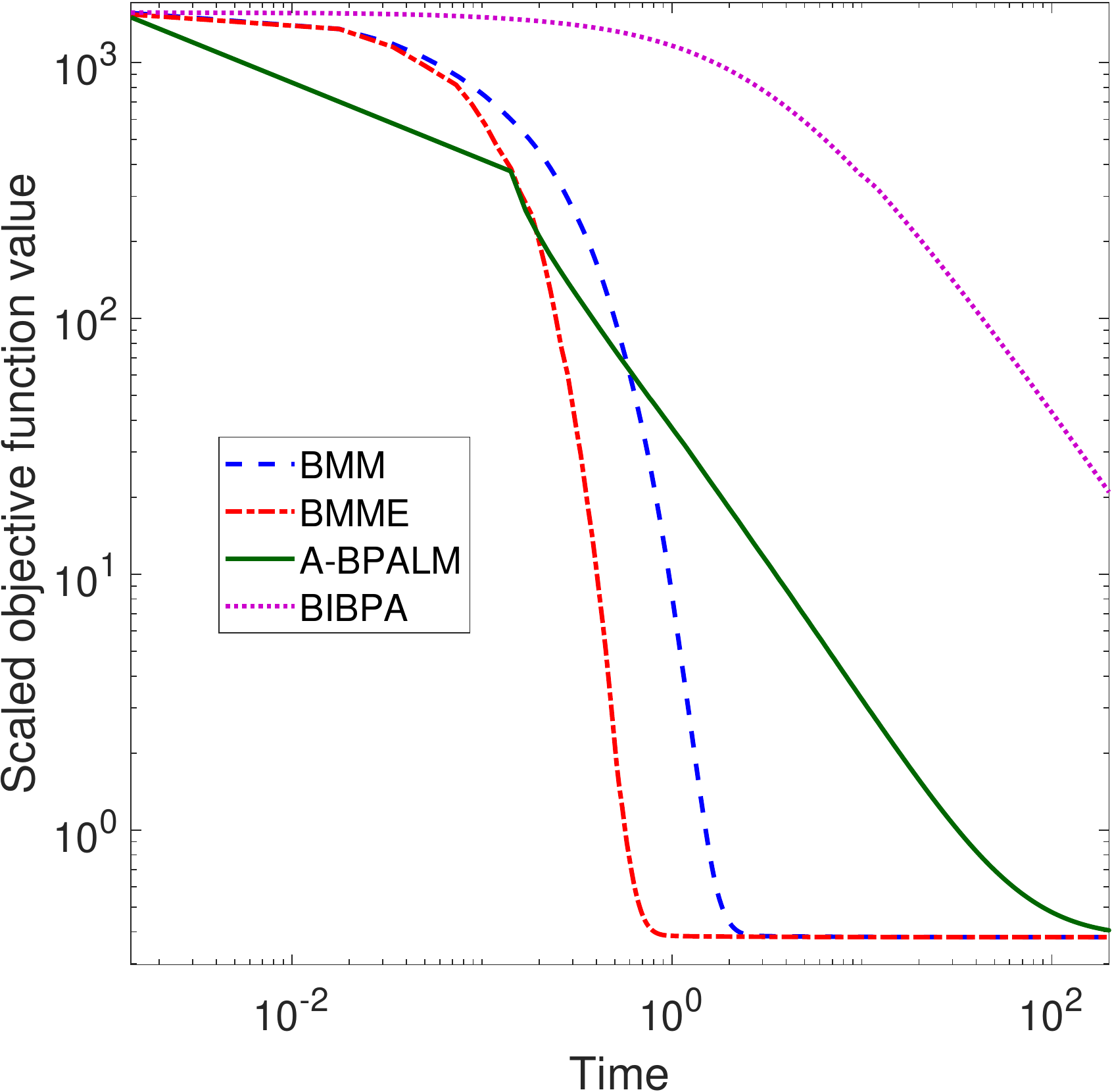}  & \includegraphics[width=0.45\textwidth]{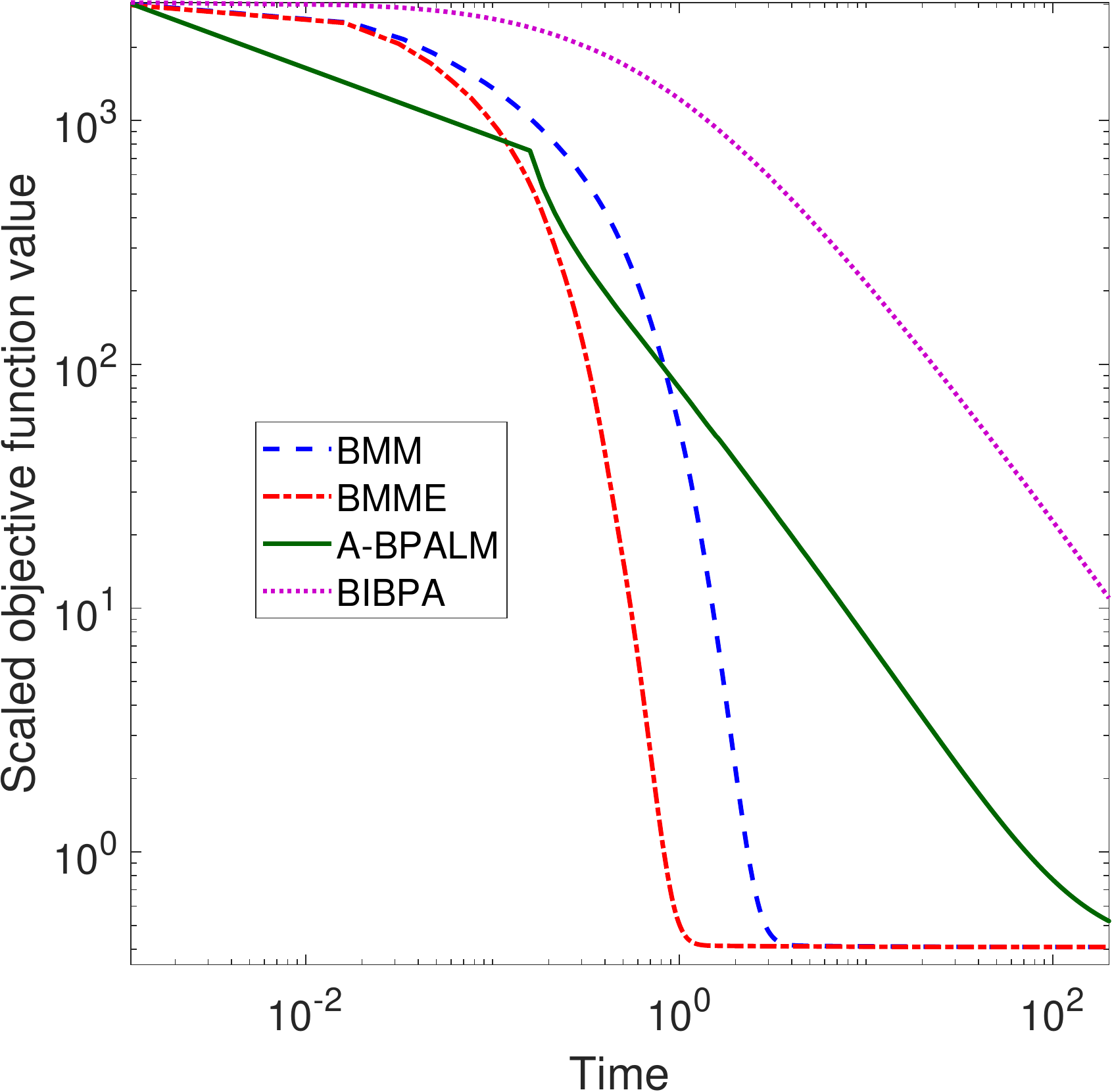}    
         \vspace{0.2cm} \\
         la1 & la2 \\ 
           \includegraphics[width=0.45\textwidth]{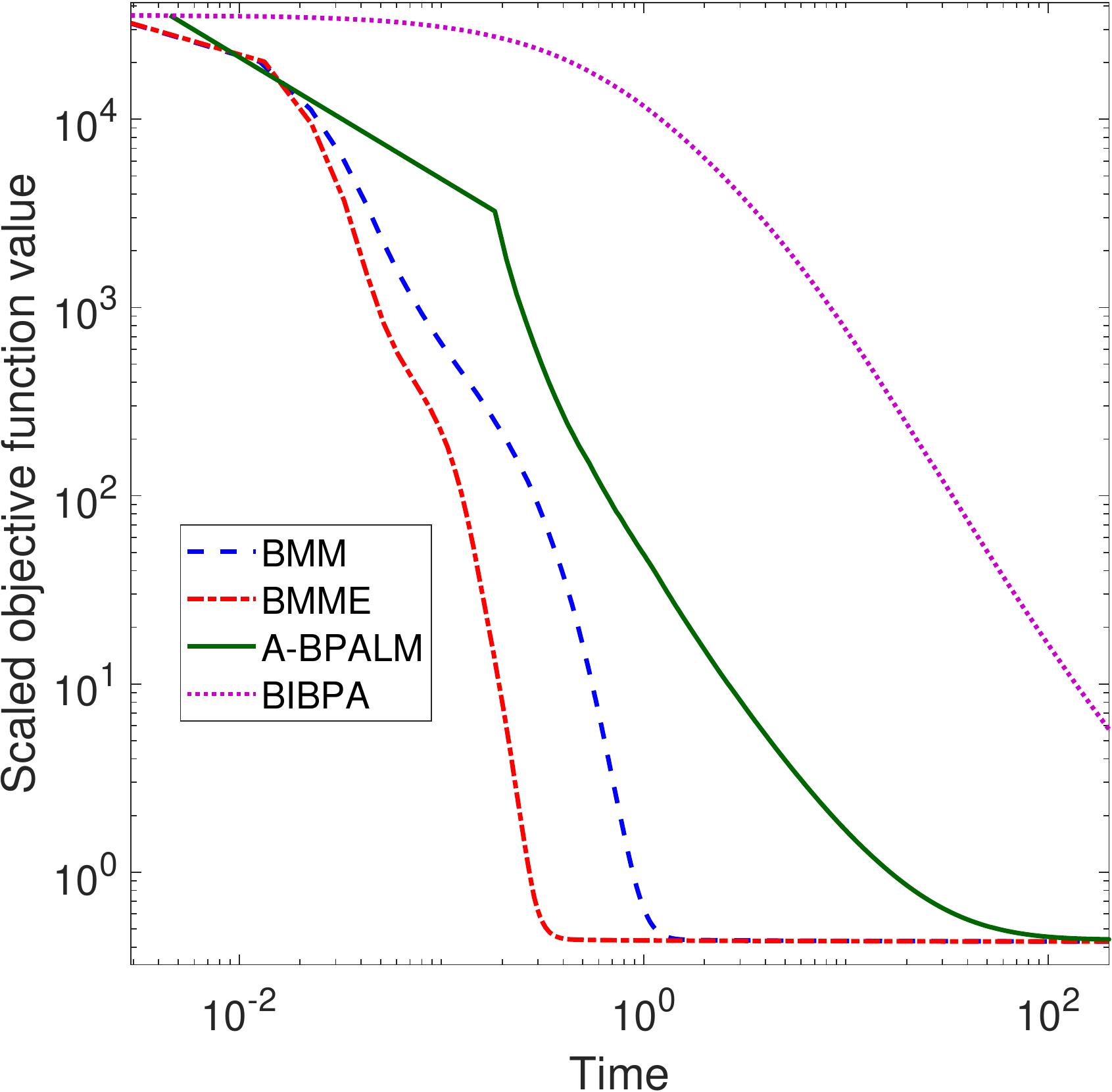}  & \includegraphics[width=0.45\textwidth]{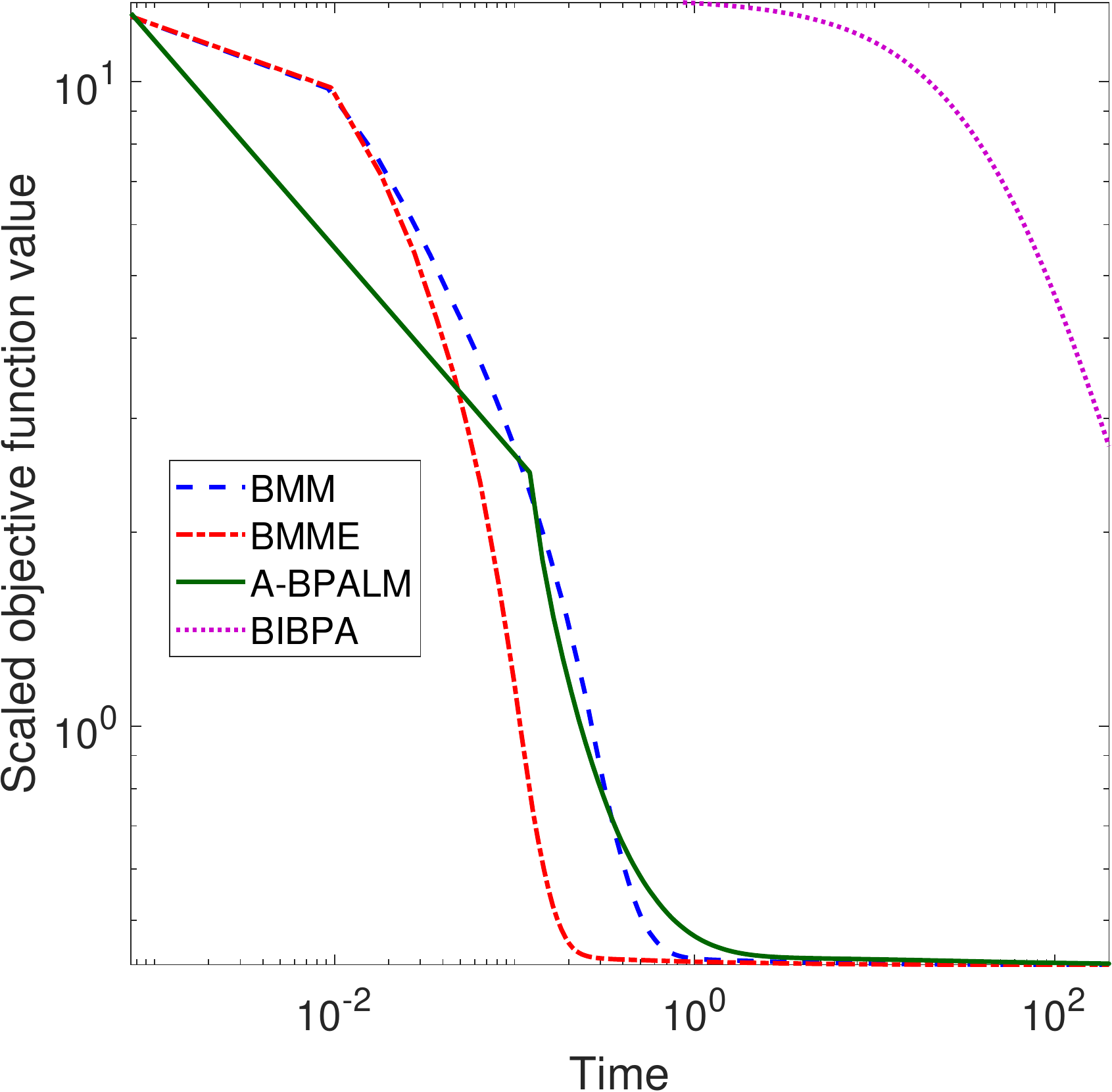}   
   \end{tabular}
        \caption{Evolution of scaled objective function values with respect to time on document data sets.} 
    \label{fig:document2}
\end{figure}

\begin{figure}
   \centering
   \begin{tabular}{cc}
              classic & k1bl \\ 
           \includegraphics[width=0.45\textwidth]{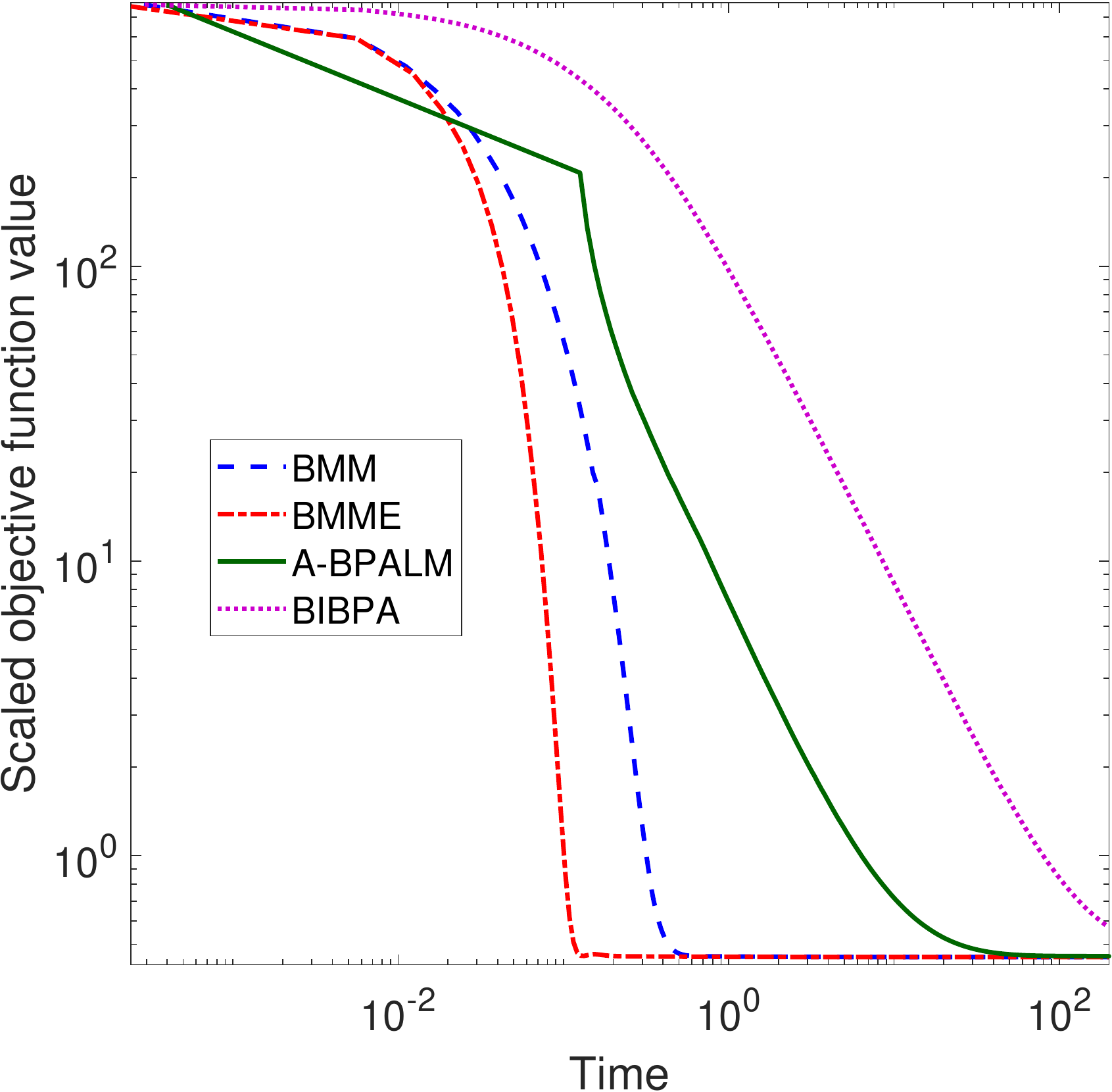}  & \includegraphics[width=0.45\textwidth]{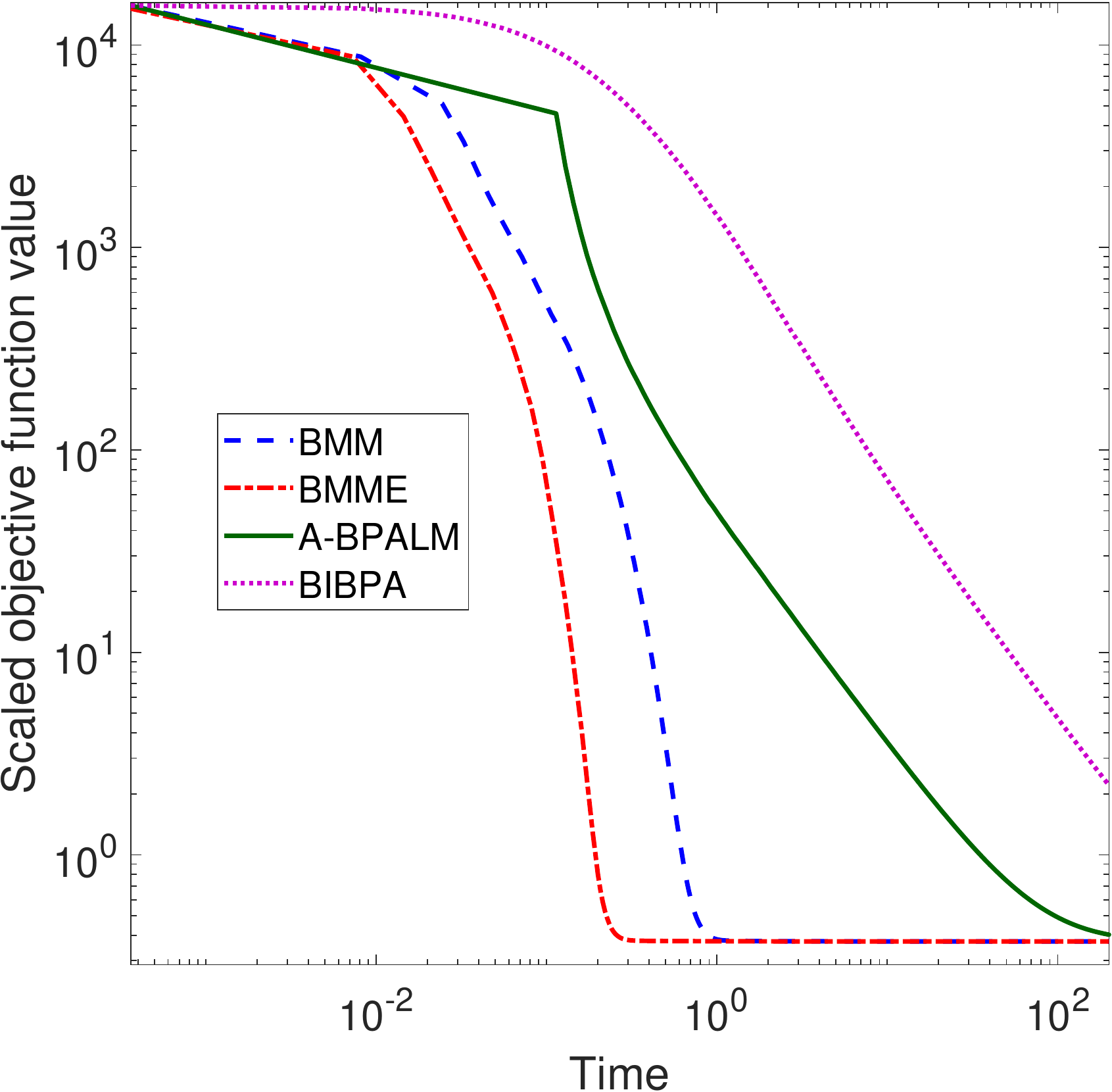}   
            \vspace{0.2cm}  \\
            tr11 & tr23 \\
           \includegraphics[width=0.45\textwidth]{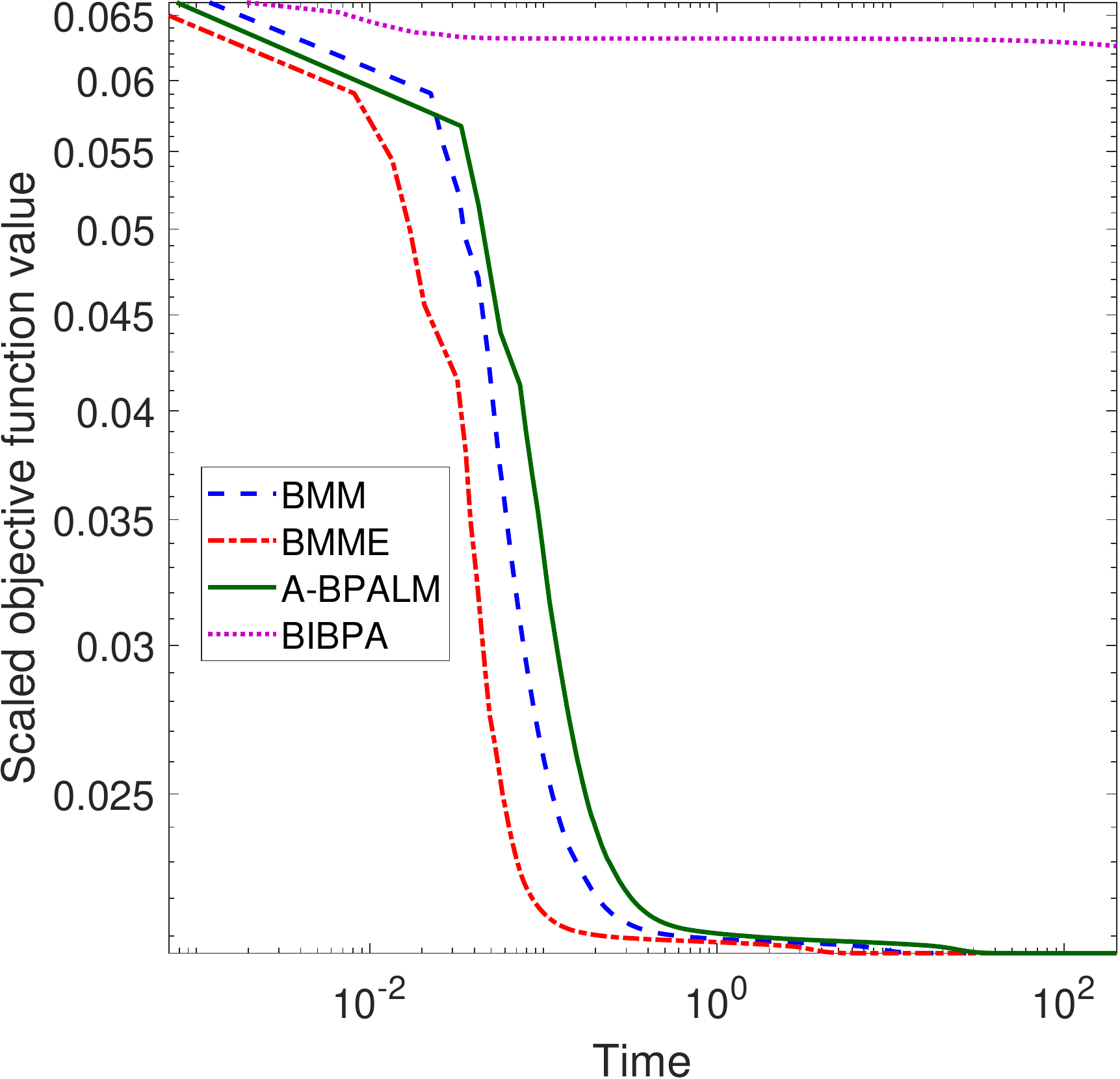}  & \includegraphics[width=0.45\textwidth]{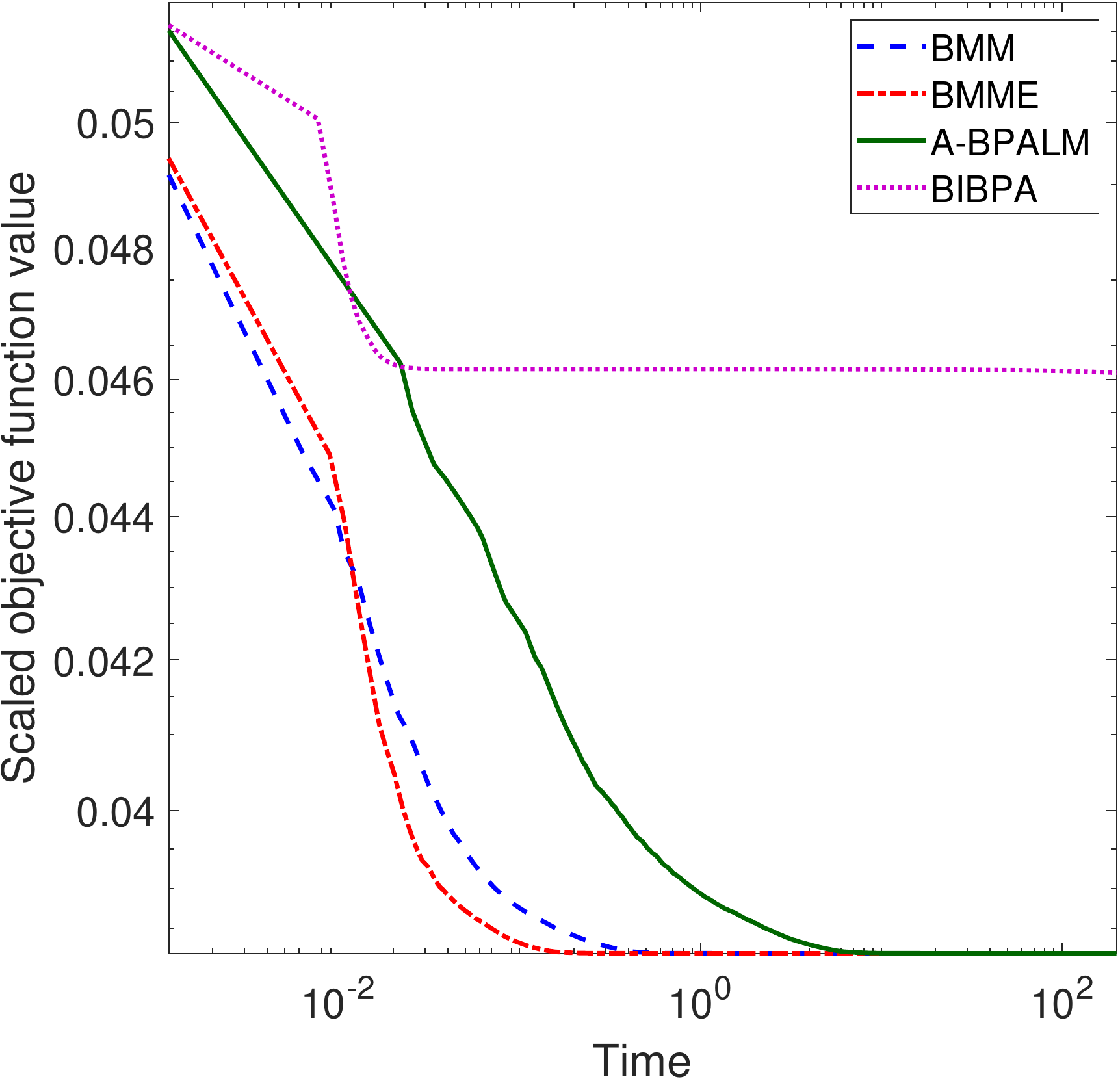}    
         \vspace{0.2cm}\\ 
          tr41 & tr45 \\ 
           \includegraphics[width=0.45\textwidth]{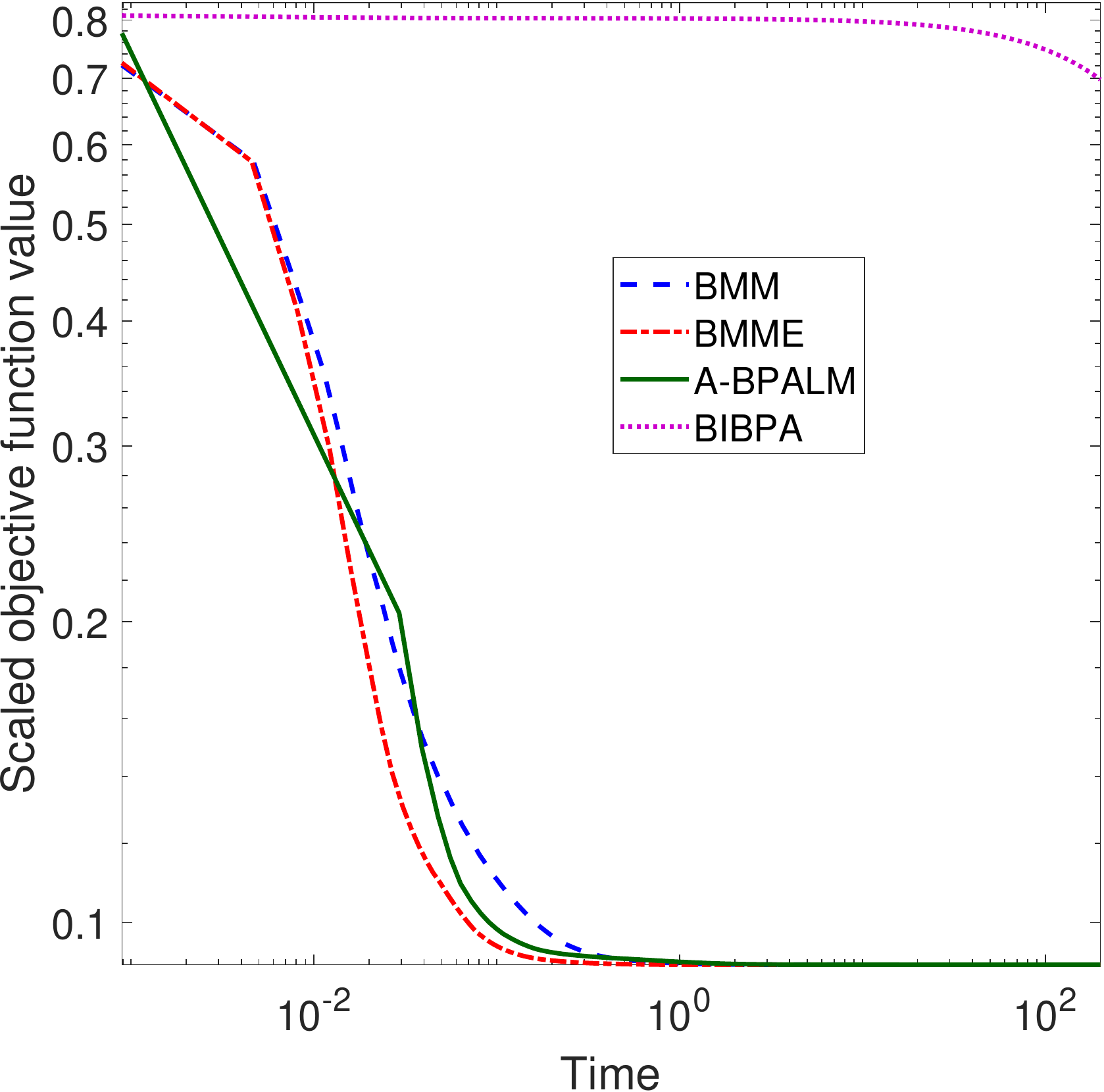}  & \includegraphics[width=0.45\textwidth]{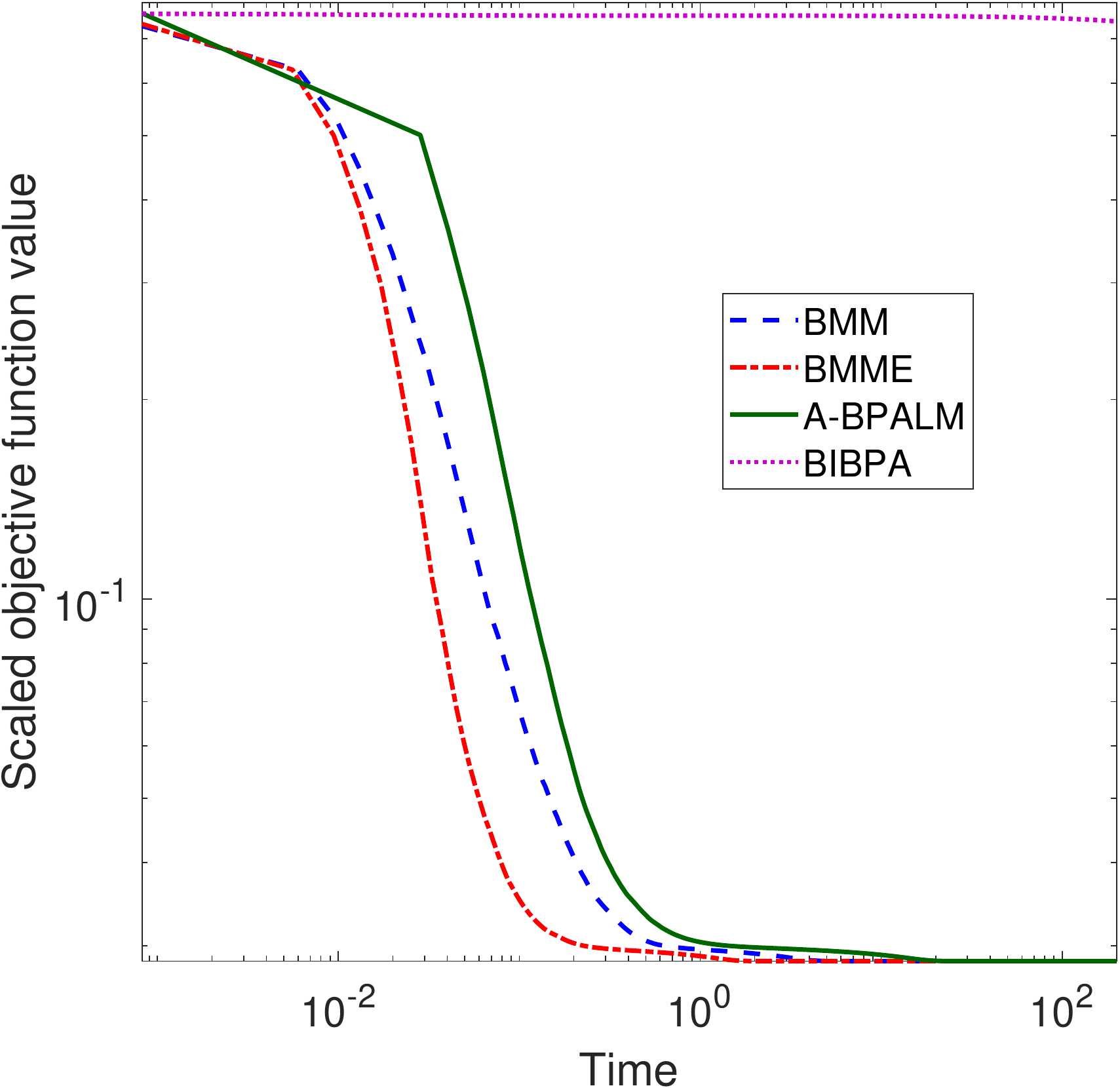}   
   \end{tabular}
        \caption{Evolution of scaled objective function values with respect to time on document data sets.} 
    \label{fig:document3}
\end{figure}

\section{Comparison between BMME and CoCaIn on the matrix completion problem}\label{appendix_mcp}

In this section, we consider BMME for solving the CSOP~\eqref{model} with $m=1$. As $m=1$, we can omit the index $i$. In addition, the relative smooth parameters and the kernel generating distance do not depend on $k$. Therefore, the condition \eqref{eq:beta} can be rewritten as follows
\begin{equation*}
        D_{\varphi}(x^k,\bar{x}^k) \leq \frac{\delta L}{L + l} D_{\varphi}(x^{k-1},x^k),
   \end{equation*}
and the update \eqref{eq:iMM_update} becomes
\begin{equation*}
       \begin{split}
           x^{k+1}\in\argmin_{x\in\mathcal X}\biggl\{LD_{\varphi}(x,\bar{x}^k) + \biggl\langle \nabla f(\bar{x}^k),x\biggr\rangle + u(x,x^k) \biggr\}.
       \end{split}
   \end{equation*}
In some applications, the constants $L,l$ might be very large, leading to a slow convergence. 
Hence, like CoCaIn~\cite{Mukkamala2020} we incorporate a backtracking line search for $L$ and $l$ into BMME. In particular, BMME with backtracking computes the extrapolation point $\bar{x}^k = x^k + \beta^k(x^k - x^{k-1}) \in {\rm int \, dom\,} \varphi$, where 
$\beta^k$ satisfies the following condition 
\begin{equation*}
     D_{\varphi}(x^k,\bar{x}^k) \leq \frac{\delta L^{k-1}}{L^{k-1} + l^k} D_{\varphi}(x^{k-1},x^k),
\end{equation*}
where $l^k$ is updated via backtracking such that
\begin{equation*}
    D_f(x^k,\bar{x}^k) \geq -l^k D_{\varphi}(x^k,\bar{x}^k).
\end{equation*}
The update $x^{k+1}$ is computed by solving the following convex nonsmooth sub-problem
\begin{equation*}
    \min_{x\in\mathcal X}\biggl\{L^kD_{\varphi}(x,\bar{x}^k) + \biggl\langle \nabla f(\bar{x}^k),x\biggr\rangle + u(x,x^k) \biggr\},
\end{equation*}
where $L^k$ is chosen via backtracking such that $L^k\geq L^{k-1}$ and 
\begin{equation*}
    D_f(x^{k+1},\bar{x}^k) \leq L^k D_{\varphi}(x^{k+1},\bar{x}^k).
\end{equation*}

We now conduct an additional experiment on the following matrix completion problem (MCP) to demonstrate the advantages of using proper convex surrogate functions:
\begin{equation}
\label{MF}
    \min_{U\in\mathbb{R}^{\mbfm\times \mbfr},V\in\mathbb{R}^{\mbfr\times \mbfn}} \biggl\{ \frac{1}{2}\|\mathcal P(A - UV)\|_F^2 + g(U,V)\biggr\},
\end{equation}
where $A\in\mathbb R^{\mathbf m \times \mathbf n}$ is a given data matrix, $\mathcal P(Z)_{ij}$ is equal to $Z_{ij}$ if $A_{ij}$ is observed and is equal to $0$ otherwise, and $g$ is a regularization term. Here, we are interested in an exponential regularization $g$ defined by
\begin{equation*}
    g(U,V) =  \lambda\Big(\sum_{ij}\big(1-\exp(-\theta |U_{ij}|)\big) + \sum_{ij}\big(1-\exp(-\theta |U_{ij}| )\big) \Big),
\end{equation*}
where $\lambda$ and $\theta$ are tuning parameters. We consider the problem \eqref{MF} as the form of \eqref{model} with $m=1$, $\mathcal X_1 = \mathcal X = \mathbb{R}^{\mbfm\times \mbfr}\times\mathbb{R}^{\mbfr\times \mbfn}$, $f(U,V) = \frac{1}{2}\|\mathcal P(A - UV)\|_F^2$, and $g_1(U,V) = g(U,V)$. 
We now investigate BMME for solving the problem \eqref{MF} by choosing a kernel generating distance $\varphi$ given by
\begin{equation*}
    \varphi(U,V) = c_1\biggl(\frac{\|U\|^2_F + \|V\|^2_F}{2}\biggr)^2 + c_2\frac{\|U\|^2_F + \|V\|^2_F}{2},
\end{equation*}
where $c_1 = 3$ and $c_2 = \|\mathcal P(A)\|_F$. In \cite{MukkamalaO19}, the authors showed that $f$ is $(L,l)$-relative smooth to $\varphi$ for all $L,l\geq 1$. BMME iteratively chooses a convex surrogate function $u$ of $g$ as follows:
\begin{equation*}
    u(U,V,U^k,V^k) =g(U^k,V^k) + \langle W^{U^k}, |U|\rangle + \langle W^{V^k}, |V|\rangle,
\end{equation*}
where $W^{U^k}_{ij} = \lambda\theta\exp(-\theta|U_{ij}^k|)$. 
BMME with backtracking updates $(U^{k+1}, V^{k+1})$ by solving the following convex nonsmooth sub-problem
\begin{equation}\label{sub:MF}
    \min_{U,V}\biggl\{u(U,V,U^k,V^k) + \langle P^k,U\rangle + \langle Q^k, V\rangle + L^k\varphi(U,V)\biggr\},
\end{equation}
where $P^k = \nabla_Uf(\bar{U}^k,\bar{V}^k) - L^k\nabla_U\varphi(\bar{U}^k,\bar{V}^k)$, $Q^k = \nabla_Vf(\bar{U}^k,\bar{V}^k) - L^k\nabla_V\varphi(\bar{U}^k,\bar{V}^k)$, and $L^k$ is chosen via a backtracking line search.
The solution to the problem \eqref{sub:MF} is defined by $U^{k+1} = -\tau^*\mathcal{S}(P^k,W^{U^k})/L^k$ and $V^{k+1} = -\tau^*\mathcal{S}(Q^k,W^{V^k})/L^k$, where $\mathcal S(A,B)_{ij} = [|A_{ij}|-B_{ij}]_+\text{sign}(A_{ij})$, and $\tau^*$ is the unique positive real root of
\begin{equation*}
    c_1\biggl(\|\mathcal{S}(P^k,W^{U^k})/L^k\|^2_F + \|\mathcal{S}(Q^k,W^{V^k})/L^k\|^2_F\biggr)\tau^3 + c_2\tau - 1 = 0.
\end{equation*}

Since CoCaIn \cite{Mukkamala2020} does not use the MM step and requires the weakly convexity of $g$, it is different from BMME for updating $(U^{k+1}, V^{k+1})$ and initializing $L^0$. In particular, CoCaIn iteratively solves the following nonconvex sub-problem
\begin{equation*}
    \min_{U,V}\biggl\{g(U,V) + \langle P^k,U\rangle + \langle Q^k, V\rangle + L^k\varphi(U,V)\biggr\},
\end{equation*}
which does not have closed-form solutions. We therefore employ an MM scheme to solve this sub-problem. For initializing the step-size, CoCaIn requires $L^0>\frac{\lambda\theta^2}{(1-\delta - \epsilon)c_2}$ that might be quite large, where $\epsilon\in(0,1)$ such that $\delta+\epsilon<1$. Unlike CoCaIn, our BMME with backtracking can use any $L^0$. The flexibility of the initialization $L^0$ may lead to a faster convergence. 

In the experiment, we set $\lambda = 0.1$, $\theta = 5$, $\delta = 0.99$, and $l^0 = 0.001$. We initialize $L^0 = 1.0001\frac{\lambda\theta^2}{(1-\delta - \epsilon)c_2}$ with $\epsilon = 0.009$ for CoCaIn while we choose $L^0 = 0.01$ for BMME with backtracking. We carry out the experiment on MovieLens 1M that contains $999,714$ ratings of $6,040$ different users. 
We choose $\mbfr = 5$ and randomly use 70\% of the observed ratings for training and the rest for testing. The process is repeated twenty times. We run each algorithm 20 seconds. 
We are interested in the root mean squared error on the test set: $RMSE = \sqrt{\|\mathcal P_T(A - UV)\|^2/N_T}$, where $\mathcal P_T(Z)_{ij} = Z_{ij}$ if $A_{ij}$ belongs to the test set and $0$ otherwise, $N_T$ is the number of ratings in the test set.
 We plotted the curves of the average value of RMSE and the objective function value versus training time in Figure \ref{fig:MCP}. 
 
\begin{figure}[ht]
\begin{center}
\begin{tabular}{cc}
\includegraphics[width =0.45\textwidth]{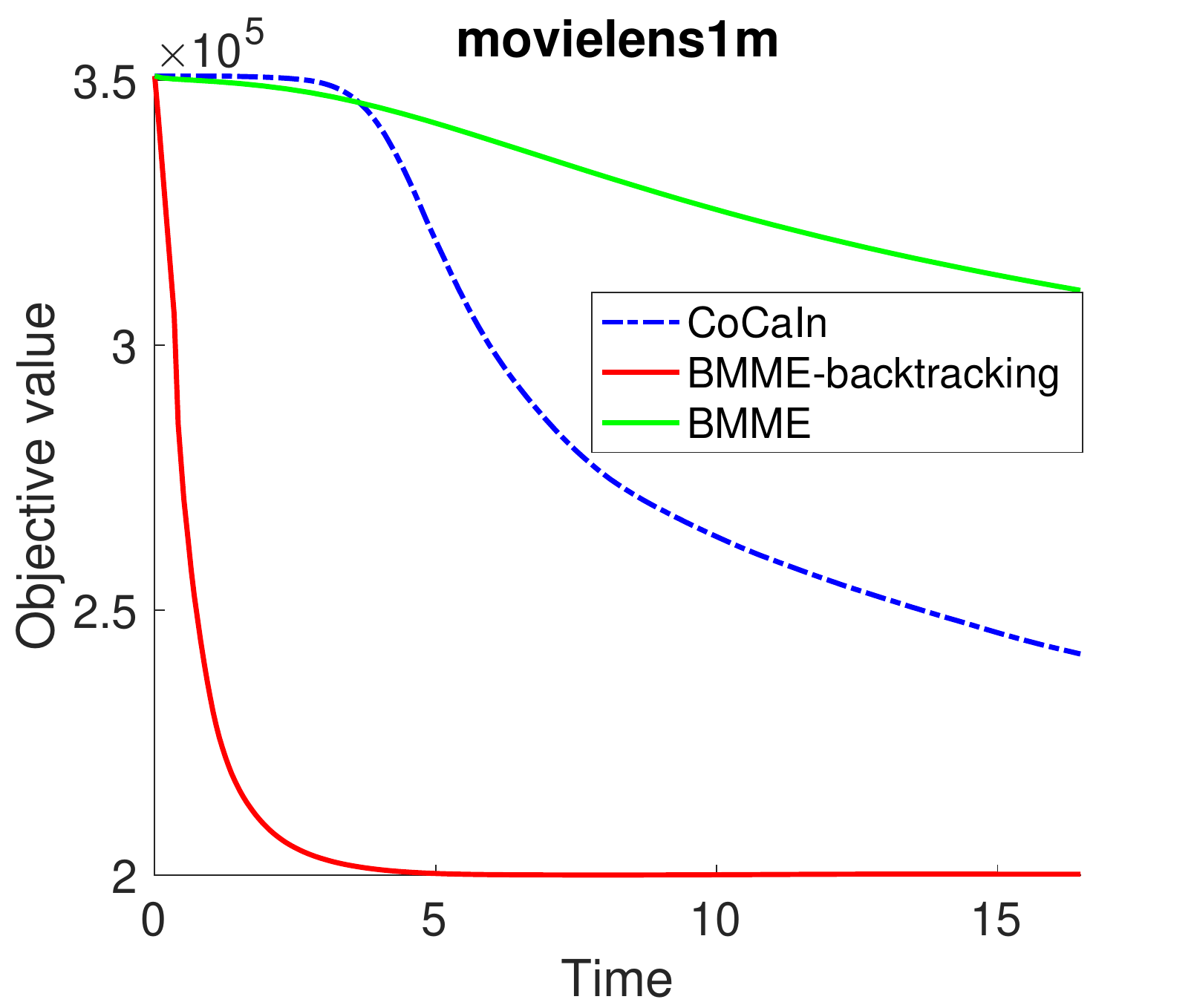}  &
\includegraphics[width=0.47\textwidth]{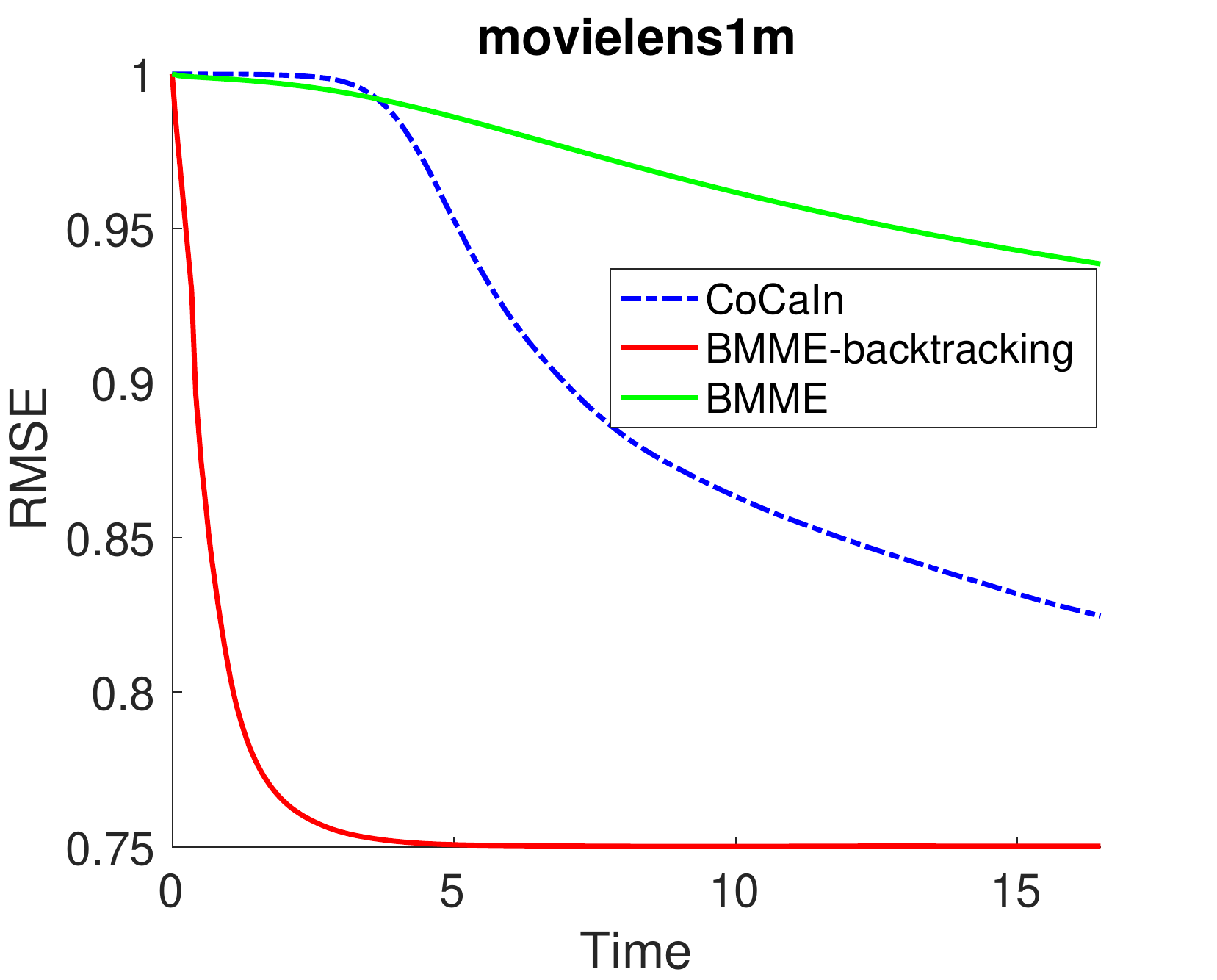}
\end{tabular}
\caption{BMME and CoCaIn applied on the MCP~\eqref{MF}. Evolution of the average value of the RMSE on the test set and the objective function value with respect to time.
\label{fig:MCP}} 
\end{center}
\end{figure} 

 We observe that BMME with backtracking converges much faster than CoCaIn and BMME without backtracking (BMME). 
This illustrates the usefulness of properly choosing the convex surrogate function and the backtracking line search for the relative smooth constants.


\bibliographystyle{siamplain}
\bibliography{iMM}

\end{document}